\def\myScaleVar{1}
\theoremstyle{plain}
    \newtheorem*{theo}{Theorem}
    \newtheorem*{corol}{Corollary}
    \newtheorem{prop}{Proposition}[section]
    \newtheorem{lemma}[prop]{Lemma}
\theoremstyle{definition}
    \newtheorem{defn}[prop]{Definition}
\theoremstyle{remark}
    \newtheorem{rem}[prop]{Remark}
    \newtheorem*{question}{Question}
\numberwithin{equation}{section}         %makes labeled equations easier to find.
\newcommand{\comment}[1]{}
\newcommand{\R}{\mathbb{R}}
\newcommand{\Z}{\mathbb{Z}}
\newcommand{\N}{\mathbb{N}}
\newcommand{\T}{\mathbb{T}}
\renewcommand{\epsilon}{\varepsilon}
\renewcommand{\phi}{\varphi}
\renewcommand{\setminus}{\smallsetminus}
\DeclareMathOperator{\interior}{int}
\newcommand{\esslune}[1]{#1^{\operatorname{ess}}}
\newcommand{\concat}[2]{{#1}*{#2}}
\newcommand{\floor}[1]{\lfloor {#1} \rfloor}
\title{Peano Curves with Smooth Footprints}
\author[J.~Bochi]{Jairo Bochi}
\address{Facultad de Matem\'aticas, PUC--Chile. Santiago, Chile.}
\email{jairo.bochi@gmail.com}
\author[P.H.~Milet]{Pedro H.~Milet}
\address{Departamento de Matem\'atica, PUC--Rio. Rio de Janeiro, Brazil.}
\email{milet@mat.puc-rio.br}
\date{\today}
\thanks{The first named author is partially supported by project Fondecyt 1140202
and project Anillo ACT1103 (Chile).
The second named author is supported by FAPERJ (Brazil)}
\subjclass[2010]{26A30; 26E10}
\begin{document}

\begin{abstract}
We construct Peano curves $\gamma : [0,\infty) \to \mathbb{R}^2$
whose ``footprints'' $\gamma([0,t])$, $t>0$, have $C^\infty$ boundaries
and are tangent to a common continuous line field on the punctured
plane $\mathbb{R}^2 \setminus \{\gamma(0)\}$.
Moreover, these boundaries can be taken $C^\infty$-close 
to any prescribed smooth family of nested smooth Jordan curves contracting to a point.
\end{abstract}

\maketitle

%%%%%%%%%%%%%%%%%%%%%%%%%%%%%%%%%%%%%%%%%%%%%%%%%
\section{Introduction}

A continuous map $\gamma : I \to \R^2$ defined on a nondegenerate interval $I \subseteq \R$
is called a \emph{Peano curve} if its image has nonempty interior.
A lot of water has run under the bridge since Peano established the existence of such curves in 1890. 
Many interesting problems concerning such curves are discussed in the book \cite{sagan1994spacefilling} by H.~Sagan.

By Sard's theorem, Peano curves are non-differentiable.
Nevertheless, they can have smooth ``footprints'',
as a consequence of our main result:

\begin{theo} %\label{theo:main}
There exist a Peano curve $\gamma : [0,\infty) \to \R^2$
and a continuous line field $\Lambda$ on 
the punctured plane $\R^2 \setminus \{\gamma(0)\}$
such that 
for every $t>0$, the boundary of the set $\gamma([0,t])$ is a $C^\infty$ curve $C_t$ 
containing the point $\gamma(t)$
and tangent to the line field $\Lambda$ at each point.

Moreover, it is possible to choose the Peano curve $\gamma$ so that
each curve $C_t$ is $C^\infty$-close to the circle $x^2+y^2=t^2$.
\end{theo}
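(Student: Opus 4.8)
Here is how I would go about it.

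\medskip

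\noindent\emph{Plan.}
The plan is to construct $\gamma$ and $\Lambda$ \emph{simultaneously}, by an explicit multiscale recursion that is reduced to a single self-referential ``cell-filling'' operation. I work with the round circles of the ``Moreover'' (the more general assertion of the abstract then follows by conjugating $\gamma$ and $\Lambda$ with a diffeomorphism of $\R^2$ carrying the round circles onto the prescribed nested family). Set $\gamma(0)=0$ and split $(0,\infty)$ into consecutive stages $[t_n,t_{n+1}]$, $n\in\Z$, with $t_n\to 0$ as $n\to-\infty$ and $t_n\to\infty$ as $n\to+\infty$; on stage $n$ the curve will fill the closed topological disk $D_{t_{n+1}}$ bounded by a smooth Jordan curve $C_{t_{n+1}}$ that is $C^\infty$-close to the circle $x^2+y^2=t_{n+1}^2$, having already filled $D_{t_n}$ on the previous stages. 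Since $\diam D_{t_n}\to 0$ as $n\to-\infty$, the map $\gamma$ assembled from the stages is continuous at $0$, and it is a Peano curve because each $D_{t_{n+1}}$ has nonempty interior. So everything reduces to producing each stage.

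The heart of the matter is the recursive unit: \emph{given a smooth closed topological disk $D$ with two marked boundary points $a$ (entrance) and $b$ (exit), and given a continuous line field $\Lambda$ defined near $D$ to which $\partial D$ is tangent, produce a space-filling curve $\gamma_D:[0,1]\to D$ with $\gamma_D(0)=a$, $\gamma_D(1)=b$, whose every footprint $\gamma_D([0,s])$ is a smooth closed disk, tangent to $\Lambda$, meeting $\partial D$ along a single boundary arc, and lying in a prescribed $C^\infty$-neighbourhood of a prescribed shrinking family of curves.} This is carried out by subdividing $D$ into finitely many sub-disks $D_1,\dots,D_k$ arranged in a chain in which consecutive cells share a boundary arc, with entrance/exit points matched up ($a=a_1$, $b_i=a_{i+1}$, $b_k=b$), the internal walls being curves tangent to $\Lambda$; then $\gamma_D$ is the concatenation of the $\gamma_{D_i}$, each obtained by applying the same operation recursively, so that $\gamma_D$ is the resulting limit (there is no base case). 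The line field $\Lambda$ on the punctured plane is built \emph{along with} the cells: one starts from the line field tangent to the round circles and, scale by scale, perturbs it by corrections whose $C^k$-size decays geometrically in the scale $n$, for every $k$, so that at every scale a cell decomposition with walls tangent to the current $\Lambda$ exists; summability of these corrections in every $C^k$ norm is what will yield both the $C^\infty$ regularity of the footprint boundaries and their $C^\infty$-closeness to the round circles.

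The bookkeeping that makes this honest is the $C^\infty$ control of the finite-stage approximants. After $m$ recursive steps the footprint of $\gamma$ at a generic time $t$ — one lying, at every scale, in the interior of a cell — is a finite union of cells, with boundary $C_t^{(m)}$; one arranges the walls to meet the boundary they abut with infinite order of tangency (for instance, the walls are flat near their endpoints), which makes every $C_t^{(m)}$ already a $C^\infty$ Jordan curve tangent to $\Lambda$. The sequence $(C_t^{(m)})_m$ is then Cauchy in $C^\infty$, uniformly in $t$, and its limit is the desired $C_t$, still tangent to $\Lambda$. That $\gamma([0,t])=D_t$, that $\gamma$ is continuous, and that $\gamma(t)\in C_t$ all come out of the construction, the front point $\gamma(t)$ being the common limit of the ``heads'' of the nested cells containing the parameter $t$ at the successive scales; continuity of $\Lambda$ at a point $z\ne 0$ is checked locally, by observing that near $z$ the relevant footprint boundaries either foliate a neighbourhood of $z$ or agree along a common arc through $z$, so their tangent lines vary continuously.

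The main obstacle is reconciling three competing demands: (i) $C^\infty$ smoothness of $C_t$ for \emph{every} $t$ — a condition on the moving front and on infinitely many scales at once; (ii) tangency of all the $C_t$ to one fixed continuous line field; and (iii) $C^\infty$-closeness to the round circles. The cell decompositions that make a curve space-filling naturally want to create corners and to distort shapes, whereas (ii) and (iii) forbid this in the limit. The technical core is therefore the quantitative statement that the scale-$n$ corrections — to the line field, to the cell walls, and hence to the footprint boundaries — can be chosen geometrically small in every $C^k$ norm with constants independent of position and of the cell, so that all the infinite processes converge in $C^\infty$ and the errors telescope; combined with the flat-ended-walls device, which upgrades ``each finite-stage footprint is $C^\infty$'' from a convenience into exactly the statement that is needed. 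I expect this uniform multiscale estimate, and the explicit choice of cell shapes realizing it, to be where the real work lies.
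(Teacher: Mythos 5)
Your plan has a genuine conceptual gap in the way it treats the line field $\Lambda$. You propose to build $\Lambda$ \emph{scale by scale}, starting from the line field tangent to the round circles and adding corrections that ``decay geometrically in the scale $n$ \emph{in every} $C^k$ norm.'' If that were possible, the limit $\Lambda$ would itself be $C^\infty$ (or at least $C^1$), hence locally Lipschitz, hence uniquely integrable. But a uniquely integrable $\Lambda$ rules out the whole construction: each boundary $C_t$ is a closed leaf of the (then unique) foliation tangent to $\Lambda$; the map $t\mapsto C_t$ is a continuous monotone surjection onto the space of leaves; and, for $\gamma$ to actually cover a given leaf $\ell$, the set $\{s:C_s=\ell\}$ must be a nondegenerate interval (a single time gives only one point of $\ell$). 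You would need uncountably many disjoint nondegenerate intervals inside $(0,\infty)$, which is impossible. So $\Lambda$ \emph{must} fail to be locally Lipschitz, which is incompatible with any scheme whose corrections to $\Lambda$ are summable in $C^1$, let alone in every $C^k$. (The paper flags exactly this: the line field ``is not even locally Lipschitz, because otherwise it would be uniquely integrable.'') The geometric $C^k$-decay is the right mechanism, but it should be applied to the boundary curves $C_t$ for each fixed $t$, not to $\Lambda$.

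There is a second, subtler issue bundled with the first. Prescribing $\Lambda$ first and then fitting all cell walls to it requires, at each point, a very rich (uncountable, nested) family of integral curves of $\Lambda$, something you cannot get for free from a continuous line field, and you give no mechanism to guarantee it. The paper's order of construction avoids both problems: it first builds the one--parameter family of ``ceiling'' boundaries $F_t$ as $C^\infty$ limits with $C^k$-summable corrections, and only \emph{afterwards} defines $\psi(x,y) := F_t'(x)$ where $y=F_t(x)$, checking that this is well defined (if $F_{t_1}(x)=F_{t_2}(x)$ then by monotonicity $x$ is an extremum of $F_{t_1}-F_{t_2}$, so the derivatives agree) and merely \emph{continuous}. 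The line field is a derived object, and its lack of smoothness is a feature, not a bug. You also lose the paper's key geometric device, the alternation of horizontal ``slicing'' and transverse ``bipartition'' of lunes, which is what makes both the horizontal extent and the thickness of the cells shrink and hence makes the approximating curves $\gamma_n$ uniformly Cauchy; a chain of sub-disks tangent to a fixed field does not obviously give you this. I would rework the plan so that the $C^\infty$-Cauchy estimates live on the family of footprint boundaries, and so that $\Lambda$ is read off from their derivatives at the end.
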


Let us state the ``moreover'' part formally:
Given any upper semicontinuous function $k : (0, \infty) \to \N$
and any lower semicontinuous function $\epsilon : (0,\infty) \to (0,\infty)$,
we can choose the Peano curve $\gamma$ in the theorem 
with the following additional property:
for each $t>0$ the curve $C_t = \partial \gamma([0,t])$ 
is the image of a $C^\infty$ embedding $\beta_t$ of the circle $\T := \R / 2\pi\Z$
into $\R^2$ such that
\begin{equation} \label{eq:proximity}
\| \beta_t - \alpha_t \|_{k(t)} < \epsilon(t),
\end{equation}
where $\alpha_t : \T \to \R^2$ is the
embedding $\theta \mapsto (t \cos \theta, t \sin \theta)$
and $\| \mathord{\cdot} \|_k$ is the usual $C^k$ norm;
see \S~\ref{subsec:initialDefinitions} for details.

\medskip

Taking $k \equiv 2$ and a sufficiently small function $\epsilon$,
we can ensure that each curve $C_t$
has everywhere nonzero curvature, and so we obtain:
%and in particular the set $\gamma([0,t])$ is convex.
%In this way we reobtain the results of \cite{pachrogers,vincewilson}.

\begin{corol}
There exist Peano curves $\gamma : [0,1] \to \R^2$ such that each set $\gamma([0,t])$
is convex.
\end{corol}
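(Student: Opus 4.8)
The plan is to deduce the corollary from the ``moreover'' part of the Theorem by choosing the semicontinuous data $k$ and $\epsilon$ appropriately and then invoking the classical characterization of convex planar regions via curvature.

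First I would apply the ``moreover'' statement with the (trivially upper semicontinuous) choice $k \equiv 2$ and with a function $\epsilon : (0,\infty) \to (0,\infty)$ to be fixed below. This produces a Peano curve $\gamma : [0,\infty) \to \R^2$ together with $C^\infty$ embeddings $\beta_t : \T \to \R^2$ whose images are the boundary curves $C_t = \partial \gamma([0,t])$ and which satisfy $\|\beta_t - \alpha_t\|_2 < \epsilon(t)$, where $\alpha_t(\theta) = (t\cos\theta, t\sin\theta)$. The key point is the choice of $\epsilon$: the circle parametrization $\alpha_t$ has $|\alpha_t'| \equiv t$ and signed curvature identically $1/t$, and the curvature of a regular $C^2$ curve depends continuously on its $2$-jet on the region where the velocity is nonzero. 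A short scale-covariance argument (compare a curve with its $t$-rescaling) then yields a sufficiently small absolute constant $c>0$ such that $\|\beta_t - \alpha_t\|_2 < c\,t$ forces $\beta_t$ to be regular with curvature $> 1/(2t) > 0$ everywhere. I would therefore set $\epsilon(t) := c\,t$, which is continuous and hence lower semicontinuous. Consequently each $C_t$ is a $C^\infty$ Jordan curve whose curvature never vanishes.

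It remains to see that this forces the footprints to be convex. Two ingredients are needed. (i) For each $t>0$ the set $\gamma([0,t])$ is compact, connected, has boundary the Jordan curve $C_t$, and has nonempty interior: indeed $C_{t/2} = \partial\gamma([0,t/2]) \subseteq \gamma([0,t/2]) \subseteq \gamma([0,t])$, while $C_{t/2}$ is disjoint from $C_t = \partial\gamma([0,t])$ (the two curves lie near concentric circles of radii $t/2$ and $t$, which are disjoint once $c$ is small), so $C_{t/2} \subseteq \interior \gamma([0,t])$. By the Jordan curve theorem together with a short connectedness argument — each complementary component of $C_t$ is connected, and $\gamma([0,t])$ is seen to contain the bounded one and to miss the unbounded one — the set $\gamma([0,t])$ coincides with the closure of the bounded complementary component of $C_t$. (ii) A $C^2$ Jordan curve with nowhere-vanishing curvature has curvature of constant sign (its curvature is a continuous nonvanishing function on the connected circle $\T$), and any such curve bounds a convex region. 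Combining (i) and (ii), every footprint $\gamma([0,t])$ with $t>0$ is convex, while $\gamma(\{0\})$ is a single point, hence trivially convex.

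Finally, restricting $\gamma$ to the interval $[0,1]$ spoils nothing: $\gamma([0,1])$ is the closed Jordan domain bounded by $C_1$, which is $C^2$-close to the unit circle and thus contains a disk, so $\gamma|_{[0,1]} : [0,1] \to \R^2$ is still a Peano curve, now with all footprints convex. The only step that requires genuine (though routine) care is the quantitative passage from $C^2$-proximity to a uniform positive lower bound on the curvature of $C_t$ carried out in the second paragraph; everything else is formal.
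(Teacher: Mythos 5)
Your proof is correct and follows exactly the route the paper indicates in the one sentence preceding the corollary: take $k\equiv 2$ and $\epsilon$ small enough that the $C^2$-proximity to the circles forces each $C_t$ to have nowhere-vanishing curvature. The paper leaves the details to the reader; you supply them cleanly, in particular the scale-invariant choice $\epsilon(t)=ct$, the quantitative curvature estimate, and the (needed but easy-to-overlook) identification of $\gamma([0,t])$ with the closed bounded Jordan domain of $C_t$.
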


This result was first obtained by Pach and Rogers in \cite{pachrogers},
and independently by Vince and Wilson in \cite{vincewilson}.
It is inspired by the following question,
attributed by Pach and Rogers to M.~Mihalik and A.~Wieczorek
(see also \cite[Problem~A.37]{CFG91}):

\begin{question}
Is there a Peano curve $\gamma :I \to \R^2$ such that 
the image $\gamma(J)$ of each subinterval $J \subseteq I$ is a convex set?
\end{question}

To our knowledge, this question remains open.
See \cite{PR_monthly} and \cite[Chapter~6]{U06} for related information.

\medskip

Coming back to our theorem, let us observe that 
the family of concentric circles  can be replaced 
by an arbitrary smooth family of nested smooth Jordan curves contracting to a point.
Indeed, it suffices to change coordinates by a suitable diffeomorphism of $\R^2$.

Despite the fact that the boundaries of the ``footprints'' $\gamma([0,t])$ are smooth,
the line field $\Lambda$ is not. Indeed, $\Lambda$ is not even locally Lipschitz,
because otherwise it would be uniquely integrable.
It is also known that generic (in the sense of Baire) continuous
line fields are uniquely integrable (see \cite[pp.~121--123]{Choquet}), which shows that $\Lambda$ is quite pathological.
Other highly non-uniquely integrable line fields are constructed in \cite{BF04};
these are tangent to uncountably many $C^k$-foliations (where $1 \le k <\infty$)
and have the additional property of being H\"older continuous.
It would be interesting to find what the optimal moduli of regularity 
of the line field $\Lambda$ and of the Peano curve $\gamma$ in our Theorem are -- 
in particular, it is not clear whether they can be taken locally H\"older continuous on $\R^2 \setminus \{\gamma(0)\}$ and $(0,\infty)$, respectively.
Let us remark that the optimal H\"older coefficient of a general Peano curve
is $1/2$ (see e.g.\  \cite[Prop.~2.3]{Falconer}).

It seems that it should be possible to extend the theorem to an arbitrary dimension $n \ge 2$,
so that $\partial \gamma([0,t])$ is a $C^\infty$ hypersurface $C^\infty$-close to a sphere, 
and it is tangent to a continuous field of hyperplanes.
Such construction should follow the same ideas of the $n=2$ case,
but since it would be considerably more technical, we will not dwell on it.

\medskip

This paper is divided into two parts:
the longer part, Section~\ref{sec:basics}, is devoted to the proof of a local, more flexible version of the theorem, namely Proposition~\ref{prop:main}.
In the shorter part, Section~\ref{sec:maincurve}, we ``glue'' these local constructions in order to prove the theorem. 

\medskip

This paper is based on the master's dissertation \cite{dissertacao} of the second named author,
which is, in turn, inspired by ideas from \cite{pachrogers,vincewilson}.
We thank the dissertation committee, especially Prof.~Ricardo S\'{a}~Earp 
who posed questions that led to the improvement of the results of the dissertation, presented here.

%%%%%%%%%%%%%%%%%%%%%%%%%%%%%%%%%%%%%%
\section{Local construction}
\label{sec:basics}

\subsection{Initial definitions and statement of the main proposition}
\label{subsec:initialDefinitions}

The aim of this section is to prove Proposition~\ref{prop:main} below,
which constructs special Peano curves whose footprints are \emph{lunes},
objects that are defined as follows:

\begin{defn}\label{def:lune}
Let $f,g:[a,b]\rightarrow \R$ be $C^\infty$ functions such that:
\begin{enumerate}[label=\upshape(\roman*)]
 \item $f$ and $g$ and each of their derivatives coincide in $a$ and in $b$, that is, $f^{(k)}(a) = g^{(k)}(a)$ and $f^{(k)}(b) = g^{(k)}(b)$ for all integer $k \ge 0$;
 \item $f(x) \leq g(x)$, for all $x \in [a,b]$.
 \end{enumerate}
The plane region 
$$
L = L(f,g) = \{(x,y) \mid a \leq x \leq b, \ f(x) \leq y \leq g(x)\}
$$ 
is called a \emph{lune} with \emph{domain} $[a,b]$.
The \emph{support} of a lune $L(f,g)$ is the (open) set 
$$
S_L = \{x \in [a,b] \mid f(x) < g(x)\}.
$$ 
A lune $L$ is said to be \emph{simple} if its support is a nonempty interval.
The \emph{essential part} of the lune $L$ is defined as the closure of the interior of $L$:
$$
\esslune{L} = \esslune{L}(f,g) = \overline{\interior(L)} \, .
$$ 
\end{defn}

\begin{figure}[ht]%
\centering
\includegraphics[height=70pt,width=0.7\columnwidth]{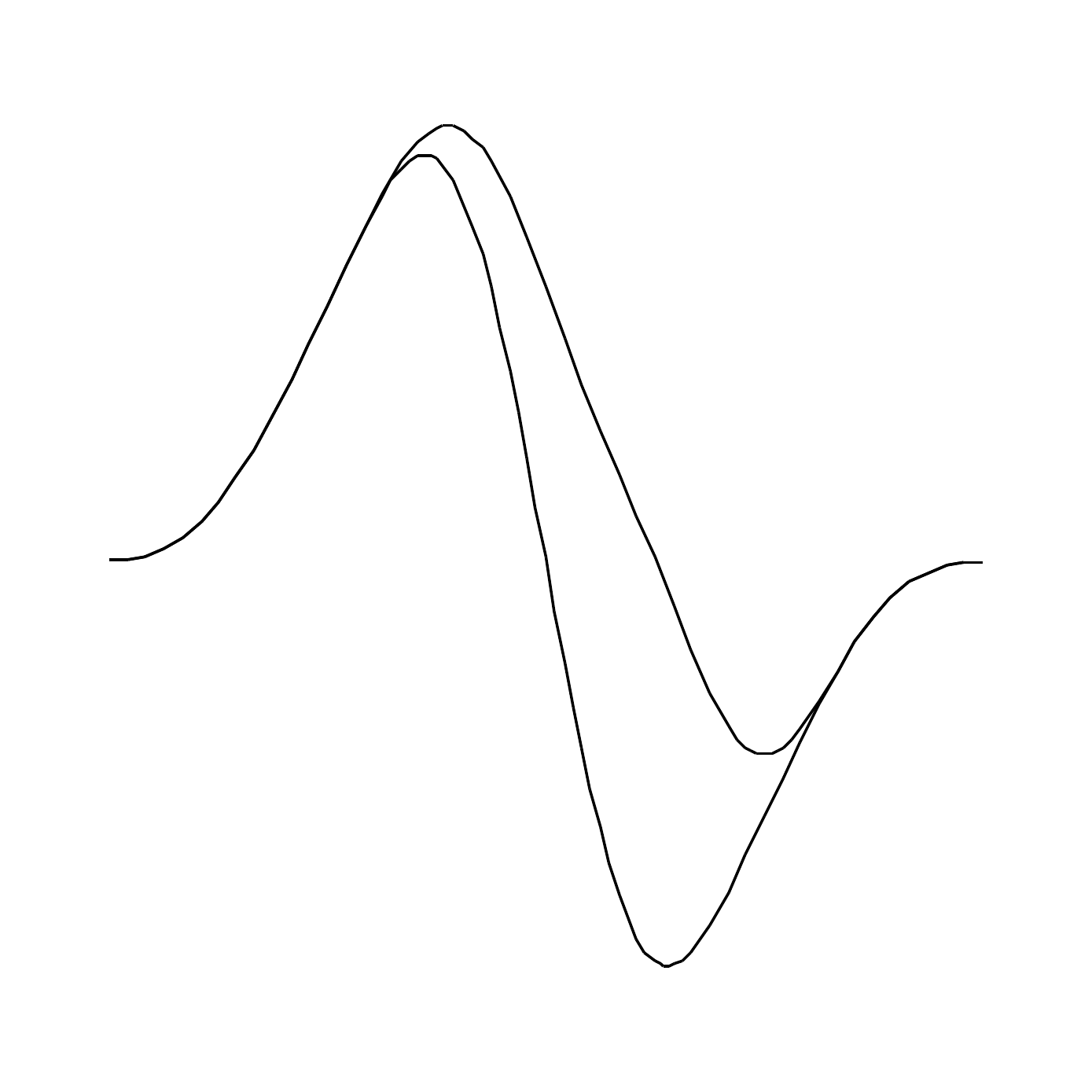}%
\caption{A typical lune in the construction.}%
\label{fig:sample_moon}%
\end{figure}

\begin{lemma}
\label{lemma:simpleLuneEssential}
For a simple lune $L$ whose support is $S_L = (c,d)$, we have 
$$\esslune{L} = \{(x,y) \in L \mid c \leq x \leq d \}.$$
In particular, $\esslune{L}$ is itself a lune, and it is simple.
\end{lemma}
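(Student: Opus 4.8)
The plan is to first pin down $\interior(L)$ by a direct argument, and then compute its closure. I would show that
\[
\interior(L) = \{(x,y) \mid c < x < d, \ f(x) < y < g(x)\} .
\]
The inclusion ``$\supseteq$'' is just continuity of $f$ and $g$: the strict inequalities defining the right-hand side persist on a neighborhood, so each such point has a ball around it inside $L$. For ``$\subseteq$'', an interior point $(x_0,y_0)$ can be pushed up and down in the $y$-direction without leaving $L$, which forces $f(x_0) < y_0 < g(x_0)$; in particular $f(x_0) < g(x_0)$, so $x_0 \in S_L = (c,d)$.

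Next I would record the elementary fact that $f = g$ off the support, i.e.\ on $[a,b]\setminus(c,d)$ (because $f \le g$ and the support is exactly $(c,d)$); in particular $f(c) = g(c)$ and $f(d) = g(d)$. Then, writing $E := \{(x,y)\in L \mid c\le x\le d\}$, the inclusion $\esslune{L} = \overline{\interior(L)} \subseteq E$ is immediate, since $\interior(L)\subseteq E$ and $E$, being the intersection of $L$ with a closed strip, is closed. For the reverse inclusion I would take $(x_0,y_0)\in E$ and approximate it by interior points: if $c<x_0<d$, the whole open vertical segment $\{x_0\}\times(f(x_0),g(x_0))$ lies in $\interior(L)$ and has $(x_0,y_0)$ in its closure; if $x_0 = c$ (or $x_0 = d$), then $y_0 = f(c) = g(c)$, and one lets $x_1 \searrow c$ inside $(c,d)$ and picks $y_1$ strictly between $f(x_1)$ and $g(x_1)$, so that $(x_1,y_1)\in\interior(L)$ converges to $(c,y_0)$ by continuity. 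This last step is where I expect the only real subtlety, and it is also where the hypothesis that $L$ is \emph{simple} is used, to guarantee $(c,d)\ne\emptyset$.

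Finally, for the ``in particular'' part, the identity $\esslune{L}=E$ exhibits $\esslune{L}$ as $L(f|_{[c,d]},g|_{[c,d]})$, so it remains only to verify the two defining conditions of a lune for these restrictions. Condition (ii) of Definition~\ref{def:lune} ($f\le g$) is inherited trivially; for condition (i), the derivatives of $f$ and $g$ agree at an endpoint equal to $a$ or $b$ by the hypothesis on $L$ itself, and at an endpoint lying in the open interval $(a,b)$ (i.e.\ $c\in(a,b)$ or $d\in(a,b)$) because $f-g$ is $C^\infty$ and vanishes identically on a one-sided neighborhood, hence is flat there. The support of the resulting lune is $\{x\in[c,d]\mid f(x)<g(x)\}=(c,d)$, a nonempty interval, so it is simple, as claimed.
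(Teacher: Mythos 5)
Your argument is correct and follows essentially the same route as the paper's: identify $\interior(L) = \{(x,y) \mid c<x<d,\ f(x)<y<g(x)\}$ and then compute its closure. You flesh out what the paper dismisses as ``clearly,'' and you also supply the verification that $\esslune{L}$ satisfies the lune axioms (the flatness of $f-g$ at $c$ and $d$), which the paper leaves implicit; this is a more complete write-up of the same idea, not a different one.
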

\begin{proof}
If $L = L(f,g)$ has support $S_L = (c,d)$ then its interior is clearly 
$$\{(x,y) \mid  c < x < d, f(x) < y < g(x)\},$$ 
so that $\{(x,y) \in L \mid c \leq x \leq d \} = \overline{\interior(L)} = \esslune{L}$.
\end{proof}

We let $C^\infty([a,b])$ denote the set of all $C^\infty$ functions $F: [a,b] \to \R$.
The \emph{$C^0$ norm} of $F \in C^\infty([a,b])$ is $\|F\|_0 = \sup_{x \in [a,b]} |F(x)|$. For $k \in \N$, the \emph{$C^k$ norm} of $F$ is $$\|F\|_k = \max \left(\|F\|_0, \|F'\|_0, \ldots, \|F^{(k)}\|_0\right).$$ 

A \emph{basic neighborhood} of $F$ is a set of the form
$$
N(F,k,\epsilon) = \big\{ G \in C^\infty([a,b]) \mid \| G-F\|_k < \epsilon \big\}.
$$
We endow the space $C^\infty([a,b])$ with the topology generated by the basic neighborhoods,
called the \emph{$C^\infty$ topology}.

Later on we will work with the space $C^\infty(\T)$ of functions on the circle $\T = \R/2\pi\Z$,
which can be considered as $2\pi$-periodic functions on the line.
The $C^k$ norms and the $C^\infty$ topology on this space are defined analogously.

If $F,G \in C^\infty([a,b])$ (or $C^\infty(\T)$) are such that $F(x) \leq G(x)$ for all $x$, we write $F \leq G$.

We now state our main technical proposition:
\begin{prop}
\label{prop:main}
Let $L = L(f,g)$ be a simple lune defined on an interval $[a,b]$.
Then there exist:
\begin{itemize}
	\item a Peano curve $\gamma:[0,1] \to L$;
	\item a continuous map $t \in [0,1] \mapsto F_t \in C^\infty([a,b])$;
	\item a continuous function $\psi : L \to \R$;
\end{itemize}
with the following properties:
\begin{enumerate}[label=\upshape(\roman*)]
	\item\label{item:floorCeiling} $F_0 = f$, $F_1 = g$;
	\item\label{item:monotonicity} If $t \leq s$ then $F_t \leq F_s$;
	\item\label{item:ceiling} Writing $\gamma(t) = (x(t),y(t))$, we have $y(t) = F_t(x(t))$.
	\item\label{item:psi}  $F'_t(x) = \psi(x,F_t(x))$;
	\item\label{item:footprint} for each $t \in (0,1]$ we have $\gamma([0,t]) = \esslune{L}(f,F_t)$; 
	\item\label{item:endpoints} $\gamma(0) = (a,f(a))$, $\gamma(1) = (b, f(b))$.
\end{enumerate}
\end{prop}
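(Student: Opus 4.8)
The plan is to build the Peano curve recursively by a subdivision scheme, at each stage refining the lune $L(f,g)$ into finitely many thinner ``sub-lunes'' whose union is all of $L$ and which are traversed in a prescribed order; the curve $\gamma$ is then the limit of the polygonal/curvilinear approximants, and $F_t$ records the upper boundary of the footprint swept out so far. The key device is a \emph{filling lemma}: given a simple lune $L(f,g)$, one can find a $C^\infty$ function $h$ with $f \le h \le g$ agreeing with $f$ and $g$ to infinite order at the endpoints of the support, such that $L(f,h)$ and $L(h,g)$ are again simple lunes, and moreover $h$ can be taken as $C^\infty$-close to $f$ (or to $g$) as we like. Iterating this ``horizontal'' slicing while also slicing ``vertically'' (restricting to subintervals of the domain, again smoothing the cut to infinite order using suitable bump functions) produces, at stage $n$, a decomposition of $L$ into $N_n$ sub-lunes $L_1^{(n)}, \dots, L_{N_n}^{(n)}$ each of diameter $\le \delta_n \to 0$, consecutive ones sharing boundary arcs, and with $\bigcup_{i \le j} L_i^{(n)}$ always of the form $\esslune{L}(f, F)$ for an appropriate $F$. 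The order of traversal at stage $n+1$ must refine that of stage $n$ (a sub-lune is visited during the time-interval that its parent was visited), which is what forces $\gamma$ to be well-defined in the limit and forces monotonicity \ref{item:monotonicity}.

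Concretely I would proceed as follows. First, set up the bookkeeping: fix sequences $\delta_n \downarrow 0$ and assign to stage $n$ a partition $0 = t_0^{(n)} < t_1^{(n)} < \dots < t_{N_n}^{(n)} = 1$ of the time interval, nested in $n$, together with sub-lunes $L_i^{(n)} = L(f, F_{t_i^{(n)}}) \setminus \interior L(f, F_{t_{i-1}^{(n)}})$ built from a single increasing family $\{F_t\}$ of $C^\infty$ functions; the point of doing a single coherent family rather than independent stages is that then \ref{item:floorCeiling}, \ref{item:monotonicity} and the continuity of $t \mapsto F_t$ are arranged by construction. Second, prove the filling lemma and its ``localized'' version allowing the new cut $F_t$ to rise only over a prescribed sub-interval of the support while remaining flat elsewhere — this is the engine that lets the footprint grow in a space-filling manner while keeping every intermediate boundary smooth. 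Third, define $\gamma(t)$: for $t = t_i^{(n)}$ put $\gamma(t) = (b_i^{(n)}, f(b_i^{(n)}))$ or the appropriate corner where the newly added sub-lune meets the floor, and in between interpolate along the boundary curve $C_t = \partial\esslune{L}(f,F_t)$; continuity and the Peano (image $=L$) property follow from $\diam L_i^{(n)} \le \delta_n$ and the fact that the sub-lunes exhaust $L$. Fourth, check \ref{item:ceiling} and \ref{item:footprint}: by construction $\gamma(t)$ lies on the graph of $F_t$ and $\gamma([0,t])$ is exactly the essential part of $L(f,F_t)$, the essential-part subtlety (Lemma~\ref{lemma:simpleLuneEssential}) being exactly why we pass to $\esslune{L}$ rather than $L$ itself — the parts of $L$ where $f = F_t$ have empty interior and are swept only in the limit. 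Fifth, construct $\psi$: the curves $\{C_t\}$ foliate $\esslune{L} \setminus (\text{floor})$, and one defines $\psi(x,y) = F'_t(x)$ where $t$ is the unique parameter with $y = F_t(x)$; continuity of $\psi$ amounts to the statement that $t \mapsto F_t$ is continuous in $C^1$ and that the leaves fit together continuously, which one must arrange in the filling lemma by controlling $\|F_t - F_s\|_1$ (in fact all $C^k$ norms) in terms of $|t-s|$. Finally \ref{item:endpoints} is immediate from the endpoint conditions defining a lune.

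The main obstacle, I expect, is simultaneously (a) making the footprint space-filling, which forces the boundary curves $C_t$ to oscillate wildly and develop long thin fingers as $t$ increases, while (b) keeping every $C_t$ a $C^\infty$ graph over the $x$-axis with all the infinite-order matching conditions at the ends of the support, and (c) keeping the map $t \mapsto F_t$ continuous into $C^\infty$ so that $\psi$ is continuous on all of $L$, including at the floor where the fingers pinch off. Points (a) and (b) together mean that each ``vertical slicing'' step cannot simply cut $L$ with a vertical segment (that destroys smoothness) but must use a smoothed cut that is a near-vertical but $C^\infty$ graph; organizing a sequence of such smoothed cuts so that the resulting sub-lunes still have diameters shrinking to zero, and so that the smoothing errors sum to something controllable in every $C^k$ norm, is the technical heart of the argument. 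The ``moreover'' refinement (staying $C^k$-close to prescribed nested curves) is then obtained by performing the whole construction inside a thin collar between two consecutive prescribed curves and iterating — but that belongs to Section~\ref{sec:maincurve}, not here; for Proposition~\ref{prop:main} it suffices that the construction can be carried out inside an \emph{arbitrary} given simple lune, which is exactly what makes it a usable local building block.
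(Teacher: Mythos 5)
Your overall blueprint --- recursively subdividing the lune by alternating a smoothed horizontal slicing (your ``filling lemma'') with smoothed vertical cuts via bump functions, keeping a nested time parametrization, and extracting a single coherent increasing family $\{F_t\}$ of ceiling functions with $C^k$-norm control summing over stages --- is essentially the paper's construction (its slicing and bipartition processes indexed by words, with a Cantor set $K$ and gap intervals $G_\omega$ carrying the time parameter). However, the step where you define $\psi$ has a genuine gap. You write that one ``defines $\psi(x,y) = F_t'(x)$ where $t$ is the unique parameter with $y = F_t(x)$,'' but uniqueness is false: at every $x$ outside the support of $L$ one has $F_t(x) = f(x) = g(x)$ for \emph{all} $t$, and more importantly $t \mapsto F_t$ is constant on the gaps of the underlying Cantor-like time set and can plateau at interior $x$ as well. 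Well-definedness of $\psi$ therefore requires a proof that $F_t'(x)$ does not depend on the choice of $t$ with $F_t(x)=y$, and this is exactly what the paper's Lemma~\ref{lemma:ceilingHeight} supplies: if $t_1 \le t_2$ and $F_{t_1}(x) = F_{t_2}(x) = y$, then $F_{t_1} \le F_{t_2}$ forces $x$ to be a local maximum of $F_{t_1}-F_{t_2}$, whence $F_{t_1}'(x) = F_{t_2}'(x)$. Without this observation, $\psi$ (and hence the line field $\Lambda$) is simply not defined.

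A related, smaller gap is your claim that continuity of $\psi$ ``amounts to'' $C^1$-continuity of $t \mapsto F_t$. That alone is not sufficient: given $(x_n,y_n) \to (x,y)$ with $y_n = F_{t_n}(x_n)$ and $y = F_t(x)$, you have no reason to expect $t_n \to t$. One must instead pass to a convergent subsequence $t_{n_k} \to t^*$, deduce $F_{t^*}(x) = y = F_t(x)$, and then invoke the same independence-of-$t$ lemma to conclude $F_{t^*}'(x) = F_t'(x)$ --- this is precisely the structure of the paper's Proposition~\ref{prop:continuityOfPsi}, and it leans on the very fact you skipped. Everything else you propose (shrinking sub-lune diameters via the smoothed vertical cuts, summable $C^k$-estimates giving continuity into $C^\infty$, and the role of $\esslune{L}$ in property~\ref{item:footprint}) matches the paper in outline and is sound.
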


Due to property~\ref{item:ceiling}, the functions $F_t$ are called \emph{ceiling functions}.
By property~\ref{item:psi}, their graphs are tangent to the line field $\Lambda(x,y)$ spanned by the vector field $(1,\psi(x,y))$.
Note that for each $t \in (0,1]$,
the point $\gamma(t)$ belongs to the boundary of $\gamma([0,t])$.
Also, this boundary is everywhere tangent to the line field $\Lambda$,
except for the two extreme points where it is not differentiable.
Finally, note that $\gamma([0,1]) = \esslune{L}$.

Our construction actually yields simple lunes $\esslune{L}(f,F_t)$ for all $t \in (0,1]$, but since this fact is not needed we will not justify it.

\subsection{Lune subdivision processes}
\label{subsec:luneSubdivision}
The proof of Proposition~\ref{prop:main} involves a limiting process on a sequence of subdivisions of the original lune. The basic subdivision processes on a lune are described here.

Throughout the remainder of this section, fix a $C^\infty$ function $\phi:\R \to \R$ with the following properties:
\begin{enumerate}[label=\upshape(\roman*)]
	\item $0 \leq \phi(x) \leq 1$ for all $x \in \R$;
	\item $\phi^{-1}(0) = (-\infty, 1/3]$;
	\item $\phi^{-1}(1) = [2/3,\infty).$
\end{enumerate}

For any $a,b \in \R, a< b$, let $\phi_{a,b}(x) = \phi\left(\frac{x-a}{b-a}\right)$. Notice that 
$\phi_{0,1} = \phi$,  
$\phi_{a,b}^{-1}(0) = \left(-\infty,\frac{2a+b}{3}\right]$ and  $\phi_{a,b}^{-1}(1) = \left[\frac{a+2b}{3}, +\infty\right)$. Also, clearly
$$  \sup_{x \in \R} |\phi_{a,b}^{(k)}(x)| = \sup_{x \in [a,b]} |\phi_{a,b}^{(k)}(x)|,$$
thus $\|\phi_{a,b}\|_k$ is defined for all $k$ by taking the $C^k$ norm of $\phi_{a,b}$ restricted to $[a,b]$. Moreover, since $\phi_{a,b}^{(k)}(x) = \frac{1}{(b-a)^k} \cdot \phi^{(k)}\left(\frac{x-a}{b-a}\right),$ it follows that
$$
\|\phi_{a,b}\|_k = \max_{i \leq k}\frac{\|\phi^{(i)}\|_0}{(b-a)^i}.
$$
Thus, $$\|\phi_{a,b}\|_k \leq \max(1,\|b-a\|^{-k})\|\phi\|_k.$$

\medskip

The $C^k$ norm of a lune $L = L(f,g)$ is defined as $\|L\|_k = \|g - f\|_k$.

We have now set the stage for the definitions of the two basic subdivision processes:

\begin{defn}[Slicing] Let $L = L(f,g)$ be a simple lune, and let $n \in \N$. For $j = 0,1, \ldots, n$ set $h_j = \left(1 - \frac{j}{n}\right)f + \frac{j}{n} g$, and for $i = 1,2, \ldots, n$ set $L_i = L(h_{i-1}, h_i).$ The set of lunes $\{L_1, L_2, \ldots,L_n\}$ is called the $n$-\emph{slicing} of $L$. 
\end{defn}

\begin{prop}
\label{prop:slicing}
The $n$-slicing $\{L_1, \ldots, L_n\}$ of a simple lune $L$
has the following properties:
\begin{enumerate}[label=\upshape(\alph*)]
	\item \label{item:slicingSimpleLune} $S_{L_i} = S_L$ and, in particular, $L_i$ is a simple lune, for each $i \in\{1,2,\ldots, n\}$;
	\item \label{item:slicingIsAPartition} $L = L_1 \cup L_2 \cup \ldots \cup L_{n}$ and $\esslune{L} = \esslune{L_1} \cup \esslune{L_2} \cup \ldots \cup \esslune{L_{n}}$;
	\item \label{item:slicingNorm} $\|L_i\|_k = \frac{1}{n} \|L\|_k$ for each $i \in \{1,\ldots, n\}$, and each $k \in \N$.
\end{enumerate}
\end{prop}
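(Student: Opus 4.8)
The plan is to verify the three items in order, each following almost immediately from the explicit identity $h_i - h_{i-1} = \tfrac{1}{n}(g-f)$ and from $h_0 = f$, $h_n = g$. Throughout, I would write $S_L = (c,d)$ for the support of the given simple lune $L = L(f,g)$.

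First, for item~\ref{item:slicingSimpleLune}, I would check that each $L_i = L(h_{i-1},h_i)$ is a lune in the sense of Definition~\ref{def:lune}. Each $h_j = (1-\tfrac{j}{n})f + \tfrac{j}{n}g$ is a $C^\infty$ function on $[a,b]$, being a convex combination of $f$ and $g$; since $f^{(k)}(a) = g^{(k)}(a)$ and $f^{(k)}(b) = g^{(k)}(b)$ for every $k\ge 0$, the same convex combination gives $h_j^{(k)}(a) = f^{(k)}(a)$ and $h_j^{(k)}(b) = f^{(k)}(b)$, independently of $j$, so consecutive floor and ceiling functions agree to infinite order at the endpoints $a$ and $b$; and $h_{i-1} \le h_i$ because their difference $\tfrac{1}{n}(g-f)$ is nonnegative. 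Moreover $h_{i-1}(x) < h_i(x)$ holds precisely when $f(x) < g(x)$, so $S_{L_i} = S_L = (c,d)$, a nonempty interval; hence $L_i$ is a simple lune.

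Next, for item~\ref{item:slicingIsAPartition}: for a fixed $x \in [a,b]$ the chain of inequalities $f(x) = h_0(x) \le h_1(x) \le \cdots \le h_n(x) = g(x)$ subdivides the fiber $[f(x),g(x)]$ into the subintervals $[h_{i-1}(x),h_i(x)]$, whose union is exactly $[f(x),g(x)]$. Therefore, for $a \le x \le b$, one has $(x,y) \in L$ iff $y \in [h_{i-1}(x),h_i(x)]$ for some $i$, iff $(x,y) \in L_i$ for some $i$; this proves $L = L_1 \cup \cdots \cup L_n$. For the essential parts I would invoke Lemma~\ref{lemma:simpleLuneEssential}: since $S_{L_i} = S_L = (c,d)$ by item~\ref{item:slicingSimpleLune}, we have $\esslune{L} = \{(x,y)\in L \mid c \le x \le d\}$ and $\esslune{L_i} = \{(x,y)\in L_i \mid c \le x \le d\}$, and intersecting the identity $L = \bigcup_i L_i$ with the vertical strip $c \le x \le d$ yields $\esslune{L} = \esslune{L_1} \cup \cdots \cup \esslune{L_n}$.

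Finally, item~\ref{item:slicingNorm} is immediate from the definition of the $C^k$ norm of a lune together with the homogeneity of the $C^k$ norm: $\|L_i\|_k = \|h_i - h_{i-1}\|_k = \|\tfrac{1}{n}(g-f)\|_k = \tfrac{1}{n}\|g-f\|_k = \tfrac{1}{n}\|L\|_k$. There is no genuine obstacle in this proposition; the only points that call for a little care are confirming that the $h_j$ inherit the infinite-order matching condition at $a$ and $b$ (so that each $L_i$ is literally a lune) and doing the bookkeeping with Lemma~\ref{lemma:simpleLuneEssential} correctly in the second item.
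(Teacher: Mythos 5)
Your proof is correct and follows essentially the same route as the paper, which simply declares items (a) and (b) ``straightforward from the definition'' and records the one-line computation $\|L_i\|_k = \|h_i - h_{i-1}\|_k = \tfrac{1}{n}\|g-f\|_k$ for item (c). You have merely spelled out the details that the paper leaves implicit (the $h_j$ inherit the infinite-order matching at $a,b$ via the convex-combination formula, the fiberwise decomposition for the union, and the use of Lemma~\ref{lemma:simpleLuneEssential} for the essential parts).
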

\begin{proof}
Properties~\ref{item:slicingSimpleLune} and \ref{item:slicingIsAPartition} are straightforward from the definition. Property~\ref{item:slicingNorm} follows from the fact that $\|L_i\|_k = \|h_{i+1} - h_i\|_k = \frac{1}{n}\|g - f\|_k$.
\end{proof}

The second basic subdivision process is defined as follows:
 
\begin{defn}[Bipartition] Let $L=L(f,g)$ be a simple lune defined on $[0,1]$ with $S_L = (a,b)$, and set $h(x) = (1 - \phi_{a,b}(x))g(x) + \phi_{a,b}(x)f(x)$ for $0 \leq x \leq 1$. The pair of lunes $\{L(f,h),L(h,g)\}$ is called the \emph{bipartition} of $L$.
\end{defn}  

\begin{prop}
\label{prop:bipartition}
The bipartition $\{L_1, L_2\}$ of a lune $L$ has the following properties:
\begin{enumerate}[label=\upshape(\alph*)]
\item \label{item:bipartitionSupports}
 $S_{L_1} = \left(a, \frac{a+2b}{3}\right)$ and $S_{L_2} = \left(\frac{2a+b}{3}, b\right)$; in particular, $L_1$ and $L_2$ are simple lunes;
\item \label{item:bipartitionIsAPartition} $L = L_1 \cup L_2$ and $\esslune{L} = \esslune{L_1} \cup \esslune{L_2}$;
\item \label{item:bipartitionNorms}
$\max(\|L_1\|_k,\|L_2\|_k) \leq 2^k\|\phi_{a,b}\|_k\|L\|_k$, $\forall k \in \N$.
\end{enumerate}
\end{prop}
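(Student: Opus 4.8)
The plan is to verify the three properties directly from the definition of the bipartition, since each is a short computation. Recall that the bipartition of a simple lune $L = L(f,g)$ on $[0,1]$ with support $(a,b)$ is the pair $\{L_1, L_2\} = \{L(f,h), L(h,g)\}$, where $h = (1 - \phi_{a,b})g + \phi_{a,b}f$. The key observation that makes everything work is that $h - f = (1-\phi_{a,b})(g-f)$ and $g - h = \phi_{a,b}(g-f)$; these two identities reduce all three statements to facts about $\phi_{a,b}$ and the original ``gap'' function $g-f$.

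For part~\ref{item:bipartitionSupports}: from $h - f = (1 - \phi_{a,b})(g-f)$ we see that $h(x) > f(x)$ exactly when both $\phi_{a,b}(x) < 1$ and $g(x) > f(x)$, i.e.\ on $\phi_{a,b}^{-1}([0,1)) \cap (a,b) = \bigl(-\infty, \tfrac{a+2b}{3}\bigr) \cap (a,b) = \bigl(a, \tfrac{a+2b}{3}\bigr)$, using the computation of $\phi_{a,b}^{-1}(1)$ recorded above. Symmetrically, $g - h = \phi_{a,b}(g-f)$ is positive exactly on $\phi_{a,b}^{-1}((0,1]) \cap (a,b) = \bigl(\tfrac{2a+b}{3}, b\bigr)$. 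Both of these are nonempty intervals, so $L_1$ and $L_2$ are simple. One should also note that $h$ is genuinely $C^\infty$ and satisfies $f \le h \le g$ with matching derivatives at the endpoints $0$ and $1$ — this follows because $f^{(k)}(0) = g^{(k)}(0)$ and $f^{(k)}(1) = g^{(k)}(1)$ (as $L$ is a lune on $[0,1]$ with support inside $(a,b) \subseteq (0,1)$, so in fact $\phi_{a,b}$ and all its derivatives vanish near $0$ and equal the constant $1$ near $1$), so $L_1, L_2$ are legitimate lunes.

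For part~\ref{item:bipartitionIsAPartition}: since $f \le h \le g$ pointwise, the fiber of $L$ over each $x$ is the interval $[f(x), g(x)] = [f(x), h(x)] \cup [h(x), g(x)]$, which is the union of the fibers of $L_1$ and $L_2$; hence $L = L_1 \cup L_2$. Taking interiors and closures, $\interior(L) \supseteq \interior(L_1) \cup \interior(L_2)$ and conversely $\esslune L = \overline{\interior L} \subseteq \overline{\interior L_1 \cup \interior L_2} = \esslune{L_1} \cup \esslune{L_2}$; the reverse inclusion is clear since $L_1, L_2 \subseteq L$. (Alternatively, invoke Lemma~\ref{lemma:simpleLuneEssential} together with part~\ref{item:bipartitionSupports}: the supports $(a,\tfrac{a+2b}{3})$ and $(\tfrac{2a+b}{3},b)$ overlap and cover $(a,b)$, which gives the essential-part identity directly.)

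For part~\ref{item:bipartitionNorms}: we must bound $\|L_1\|_k = \|h - f\|_k = \|(1-\phi_{a,b})(g-f)\|_k$ and $\|L_2\|_k = \|g - h\|_k = \|\phi_{a,b}(g-f)\|_k$. Apply the Leibniz rule: $\bigl\|(1-\phi_{a,b})(g-f)\bigr\|_k \le \sum_{i=0}^k \binom{k}{i} \|(1-\phi_{a,b})^{(i)}\|_0 \, \|(g-f)^{(k-i)}\|_0 \le \bigl(\sum_{i=0}^k \binom{k}{i}\bigr) \|1-\phi_{a,b}\|_k \|g-f\|_k = 2^k \|1-\phi_{a,b}\|_k \|L\|_k$, and since $\|1-\phi_{a,b}\|_k = \|\phi_{a,b}\|_k$ for $k \ge 1$ while for $k=0$ both $\|1-\phi_{a,b}\|_0$ and $\|\phi_{a,b}\|_0$ are $\le 1 \le \|\phi_{a,b}\|_k$-type bounds absorb trivially, we get $\|L_1\|_k \le 2^k \|\phi_{a,b}\|_k \|L\|_k$; the estimate for $\|L_2\|_k$ is identical with $\phi_{a,b}$ in place of $1-\phi_{a,b}$. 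This is entirely routine. The only mild subtlety — and the step I would be most careful about — is part~\ref{item:bipartitionSupports}, specifically matching the two endpoints $\tfrac{a+2b}{3}$ and $\tfrac{2a+b}{3}$ to the correct preimages $\phi_{a,b}^{-1}(1)$ and $\phi_{a,b}^{-1}(0)$ without transposing them, and confirming the supports are nonempty (which needs $a < b$, guaranteed since $L$ is simple). None of this presents a real obstacle.
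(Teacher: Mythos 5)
Your overall approach matches the paper's. The paper simply declares parts~\ref{item:bipartitionSupports} and \ref{item:bipartitionIsAPartition} straightforward, and proves part~\ref{item:bipartitionNorms} by exactly the Leibniz-rule estimate you give: it expands $h = g + \phi_{a,b}(f-g)$, bounds $\|h^{(k)}-g^{(k)}\|_0 \le 2^k\|\phi_{a,b}\|_k\|f-g\|_k$, and then takes the maximum over derivative orders $i\le k$ (the quantity $2^i\|\phi_{a,b}\|_i\|L\|_i$ is nondecreasing in $i$). Your treatment of $\|L_1\|_k$ via $1-\phi_{a,b}$ is the ``analogous'' argument the paper alludes to; your observation that $\|1-\phi_{a,b}\|_k = \|\phi_{a,b}\|_k$ is correct because $\phi_{a,b}$ attains both $0$ and $1$ on $[a,b]$, though the sentence you wrote to justify the $k=0$ case is garbled and should just say this directly.

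There is, however, a genuine gap in your main argument for part~\ref{item:bipartitionIsAPartition}. You assert $\overline{\interior L} \subseteq \overline{\interior L_1 \cup \interior L_2}$ as if it followed from ``taking interiors and closures'' of $L = L_1 \cup L_2$. It does not: $\interior L = \interior(L_1\cup L_2)$ is in general \emph{strictly larger} than $\interior L_1 \cup \interior L_2$. Concretely, for any $x$ in the overlap $\bigl(\tfrac{2a+b}{3},\tfrac{a+2b}{3}\bigr)$ one has $f(x) < h(x) < g(x)$, so $(x,h(x)) \in \interior L$, yet this point lies on $\partial L_1 \cap \partial L_2$ and belongs to neither interior. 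The inclusion $\interior L \subseteq \overline{\interior L_1 \cup \interior L_2}$ is still true, but it requires noticing that each such boundary point is approached from below by interior points of $L_1$ (or from above by interior points of $L_2$). Your parenthetical alternative---Lemma~\ref{lemma:simpleLuneEssential} combined with part~\ref{item:bipartitionSupports}, using that the supports $(a,\tfrac{a+2b}{3})$ and $(\tfrac{2a+b}{3},b)$ overlap and cover $(a,b)$---sidesteps the interior-of-a-union pitfall entirely and should be promoted to the main argument (it still needs the one-line check that a point $(x,y)\in\esslune L$ with, say, $y\le h(x)$ but $x>\tfrac{a+2b}{3}$ has $y=f(x)=h(x)$ and hence lands in $\esslune{L_2}$, but that is trivial).
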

\begin{proof}
Properties~\ref{item:bipartitionSupports} and \ref{item:bipartitionIsAPartition} are straightforward from the definition. As for \ref{item:bipartitionNorms}, notice that 
$$h^{(k)}(x) = g^{(k)}(x) + \sum_{i=0}^k {\binom{k}{i}}\phi_{a,b}^{(k-i)}(x)(f - g)^{(i)}(x),$$
so that
$$ \|h^{(k)} - g^{(k)}\|_{0} \leq 2^k \|\phi_{a,b}\|_{k} \|f - g\|_{k} $$
and
$$ 
\|L_2\|_k \leq \max_{i \leq k} (2^i \|\phi_{a,b}\|_{i} \|L\|_{i}) = 2^k \|\phi_{a,b}\|_{k} \|L\|_{k} \, .
$$
The estimate of $\|L_1\|_k$ is analogous.
\end{proof}

\subsection{A family of lunes}
\label{subsec:luneFamily}

Let us begin the proof of Proposition~\ref{prop:main}.
Let a simple lune $L=L(f,g)$ with domain $[a,b]$ be given.
Clearly it is sufficient to consider the case where $S_L=(a,b)$.
By rescaling if necessary, we can assume that $a=0$ and $b=1$.
The functions $f$ and $g$ will be fixed for the remainder of this section.

\medskip

Our recursive construction is based on the following definition:

\begin{defn}
Let $\{m_j\}_{j \geq 1}$ be a sequence of positive integers. The \emph{set of words} with respect to the sequence $\{m_j\}$ is the set
$$ \Omega = \{\omega = (i_1,i_2,\ldots, i_n) \mid n \in \N, \  i_j \in \{1,2, \ldots, m_j\} \text{ for } j=1,2,\ldots,n\},$$
and its elements are called \emph{words with respect to} $\{m_j\}_{j \geq 1}$, or simply \emph{words}. The length of a word $\omega = (i_1, \ldots, i_n)$ is denoted as $|\omega|$ and equals $n$. 

\emph{The set of words with successor} with respect to $\{m_j\}$ is 
$$
\Omega^\ast = \{\omega = (i_1,i_2,\ldots, i_n) \in \Omega \mid i_n < m_n  \}.
$$ 
The \emph{successor} of a word 
$\omega = (i_1, \ldots, i_n) \in \Omega^\ast$ is the word $\omega^+ = (i_1, \ldots, i_{n-1}, i_n+1)$.

Finally, if $\omega_0 = (i_1, \ldots, i_k)$ and $\omega_1 = (j_1, \ldots, j_n)$ are two words, the word 
$$
\concat{\omega_0}{\omega_1} = (i_1, \ldots, i_k, j_1, \ldots j_n)
$$ 
of length $k + n$ is the \emph{concatenation} of $\omega_0$ and $\omega_1$.
\end{defn}

Our first goal is to recursively define both a sequence $\{m_j\}_{j \geq 1}$ of integers and a family $\{L_\omega\}$ of lunes indexed by words with respect to this sequence.

Let us fix a sequence of positive real numbers $\{\epsilon_n\}_{n \geq 2}$ such that 
$$
\sum \epsilon_n < \infty \, .
$$  
Let $m_1 = 1$ and $L_{(1)} = L$. For $k \geq 2$:

\begin{enumerate}[label=Step \arabic*., ref=\arabic*] %, topsep=1pc, itemsep=2pc]

\item Assume we know the values of $m_1, m_2 \ldots, m_{k-1}$ and that $L_\omega$ is defined for every word $\omega$ of length $k-1$ with respect to the sequence $\{m_j\}_{j \geq 1}$ (although this sequence is not yet fully defined, the set of words of length $k-1$ depends only on the $k-1$ first integers in the sequence). 

\item \label{step:choiceOfnk} For every word $\omega$ of length $k-1$, pick $n_\omega$ sufficiently large such that the lunes generated in the $n_\omega$-slicing of $L_\omega$,
$\left\{L_\omega^1, L_\omega^2, \ldots, L_\omega^{n_\omega}\right\}$,
satisfy:
\begin{equation}
\|L_\omega^j\|_k < \frac{\epsilon_{k}}{2^{k} \|\phi_{a,b}\|_{k}}, \quad  j = 1, \ldots, n_\omega,
\label{eq:sufficientlyLargeSlicing}
\end{equation}
where $S_{L_\omega} = (a,b)$ (property~\ref{item:slicingNorm} in Proposition~\ref{prop:slicing} allows us to pick such an $n_\omega$). 

\item Set $n_k = \max\{n_\omega \mid \omega \in \Omega, |\omega| = k-1\}$ and $m_k = 2 n_k$. Notice that, for every word $\omega$ of length $k-1$, the lunes generated in the $n_k$-slicing of $L_\omega$ (which is finer than the $n_\omega$-slicing) satisfy \eqref{eq:sufficientlyLargeSlicing}.

\item For every $\omega$ of length $k-1$, consider the lunes $\left\{L_\omega^1, \ldots, L_\omega^{n_k}\right\}$ generated in the $n_k$-slicing of $L_\omega$. For $j = 1, 2, \ldots, n_k$, let $\{L_\omega^{j,1},L_\omega^{j,2}\}$ be the bipartition of $L_\omega^j$, and set 
$$L_{\concat{\omega}{(2j-1)}} = L_\omega^{j,1} \text{ and } L_{\concat{\omega}{(2j)}} = L_\omega^{j,2} $$

\item We have defined $L_{\omega'}$ for every word $\omega'$ of length $k$. Moreover, by Proposition~\ref{prop:bipartition} and by inequality~\eqref{eq:sufficientlyLargeSlicing},
$\|L_{\omega'}\|_k \leq  \epsilon_k$ whenever $|\omega'| = k$.
\end{enumerate}

\def\myScaleVar{0.41}

\begin{figure}[htp]
 \begin{center}
 \subfloat[A lune is first $3$-sliced (here, schematically, $n_2 = 3$).]{\label{fig:moonSl}\includegraphics[scale=\myScaleVar]{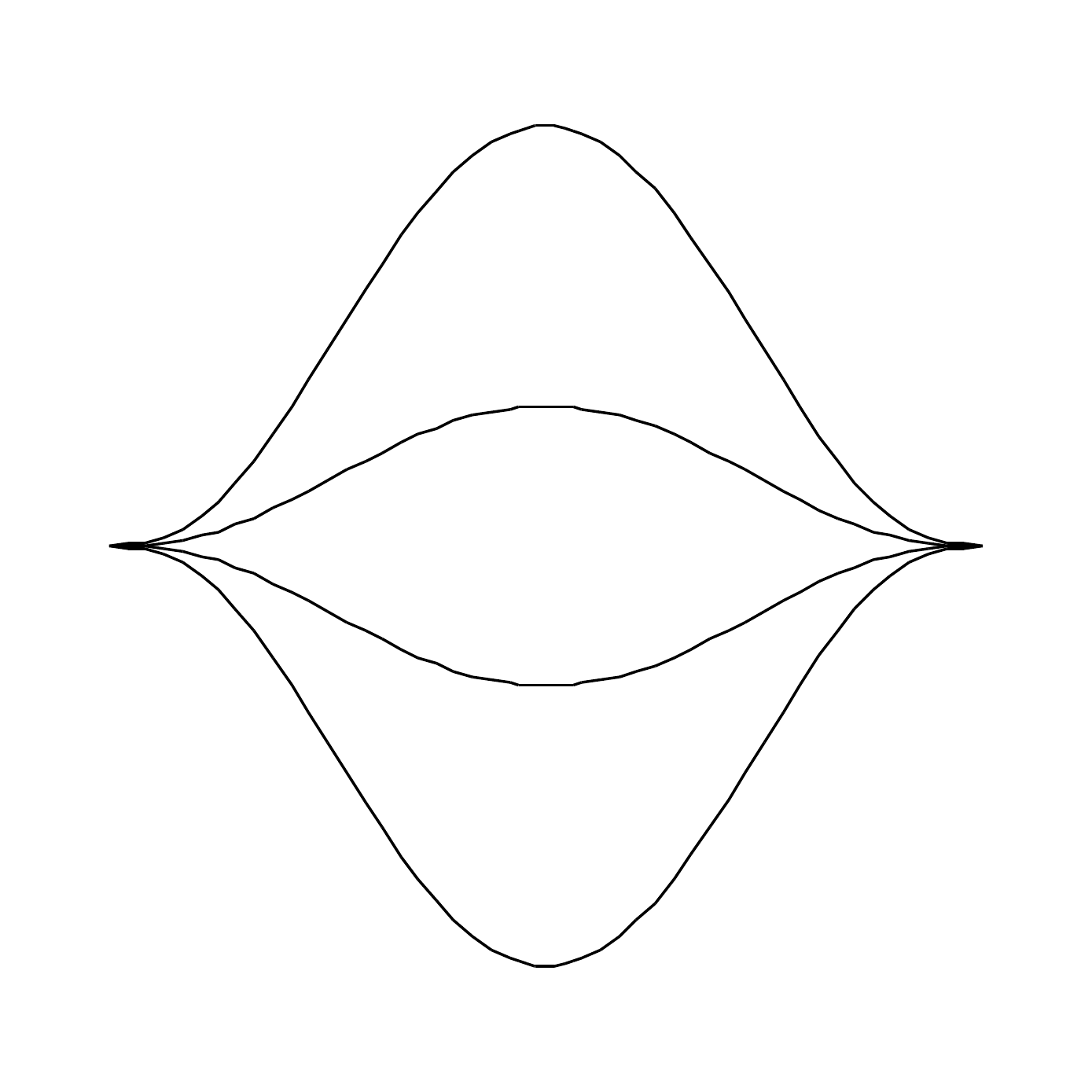}}
 \subfloat[Each slice is bipartitioned, completing the recursion for $k = 2$.]{\label{fig:moonSlBip}\includegraphics[scale=\myScaleVar]{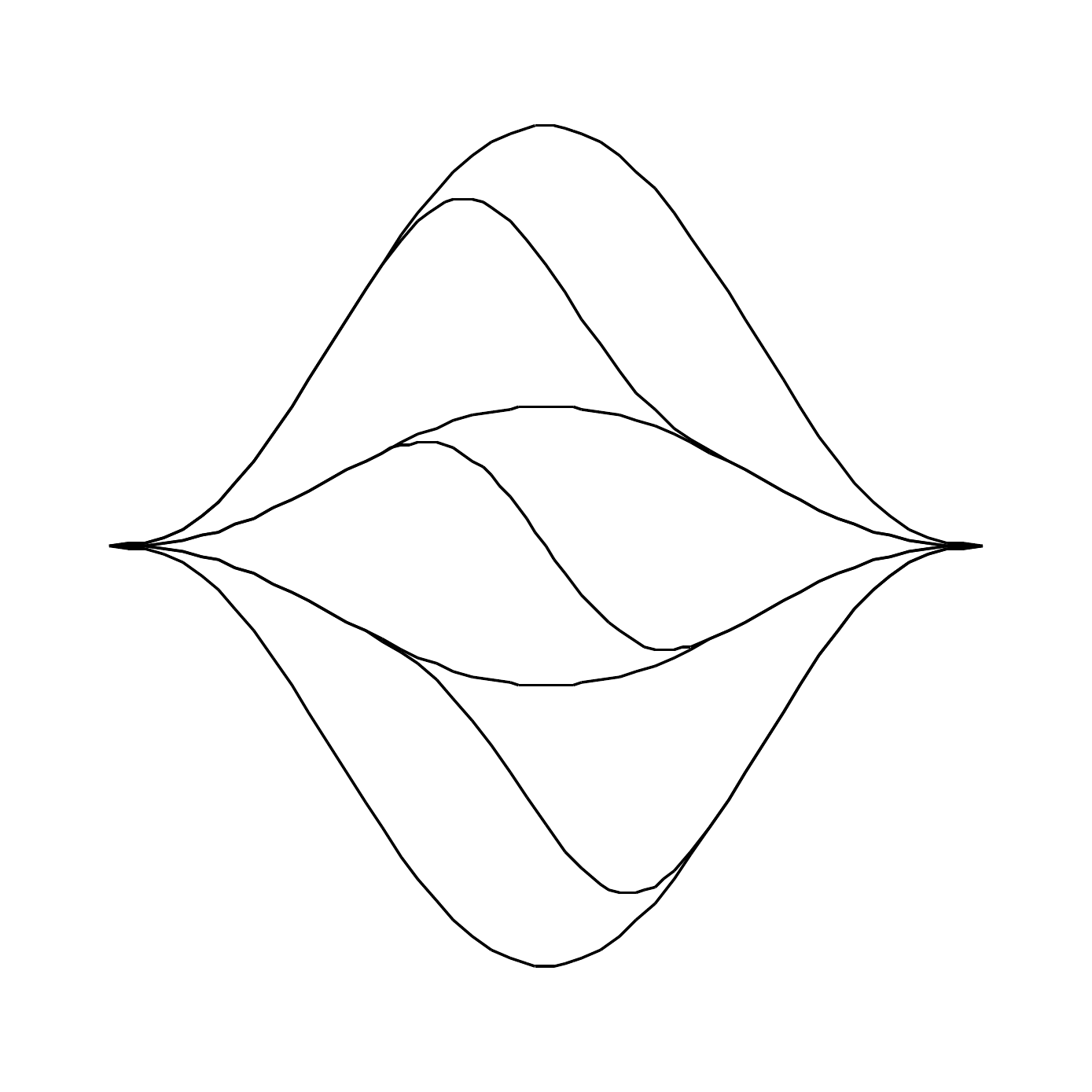}} \\
 \subfloat[Then, assuming $n_3 = 2$, each of the previous lunes is $2$-sliced.]{\label{fig:moonsSlBipSl}\includegraphics[scale=\myScaleVar]{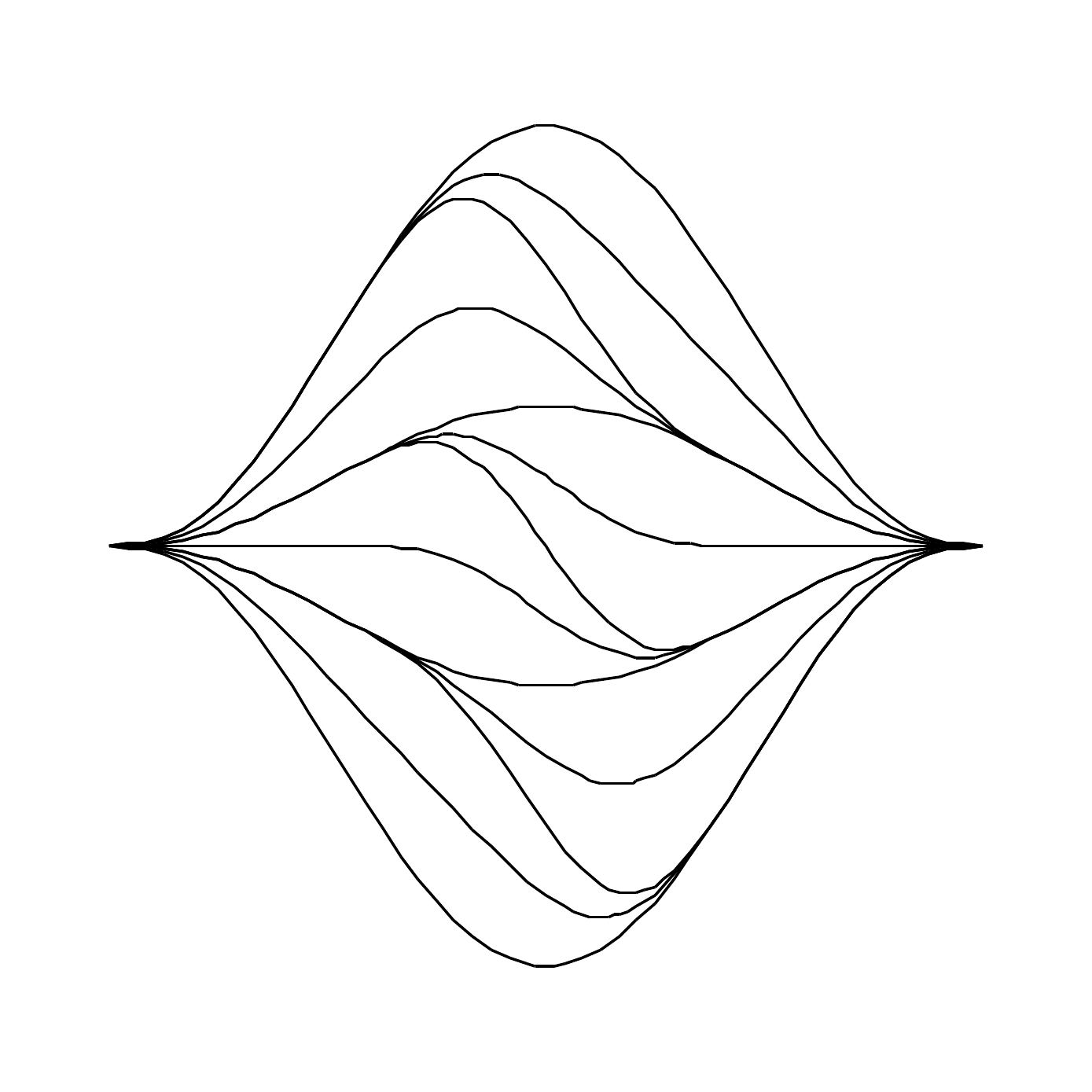}}
 \subfloat[Once again, the slices are bipartitioned, and the recursion with $k=3$ is complete.]{\label{fig:moonSlBipSlBip}\includegraphics[scale=\myScaleVar]{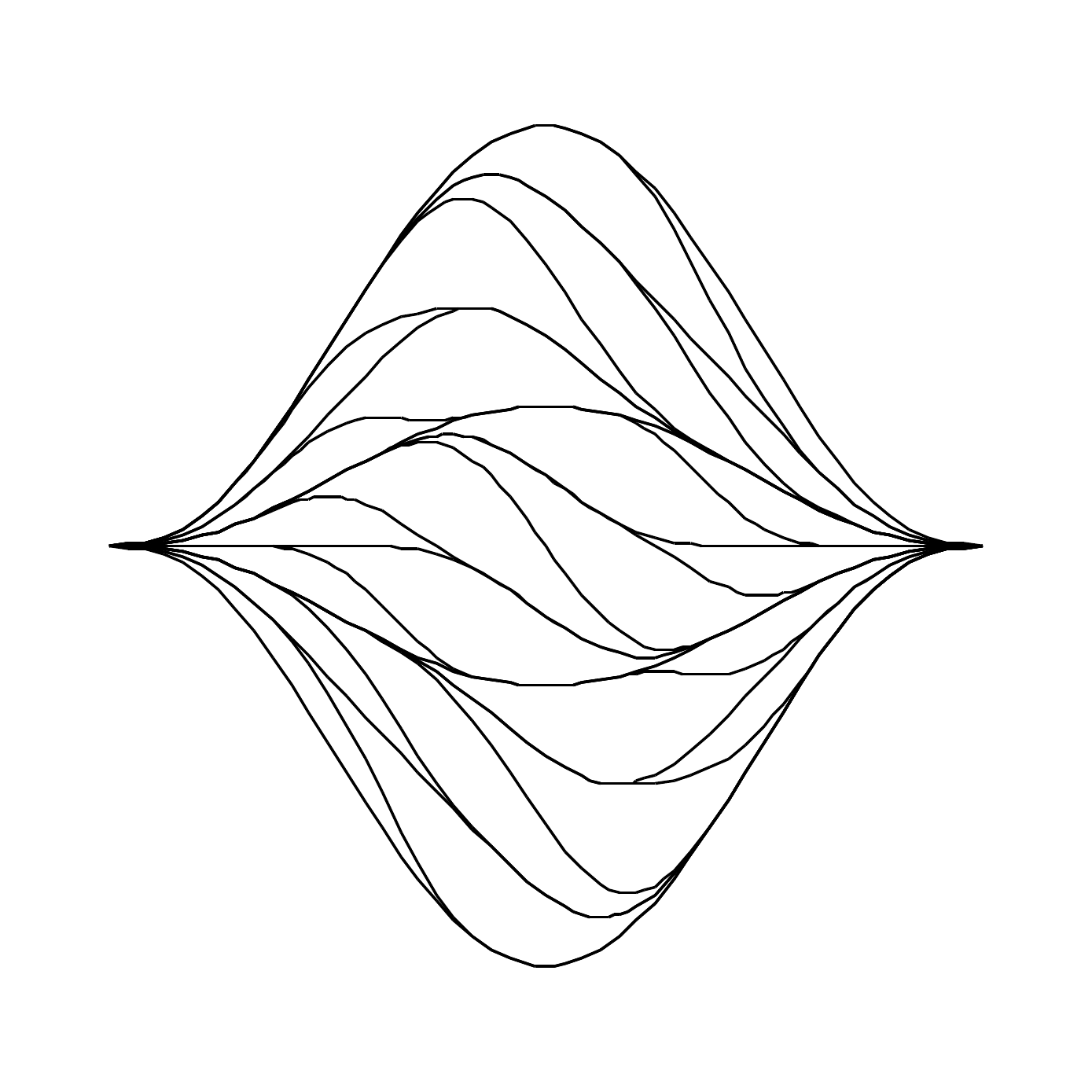}}
  \end{center}
	\caption{Illustration of two recursion steps for the subdivision of a lune. At the end of the recursion step for $k=2$, we have a total of $6$ lunes, $L_{(1,1)}, L_{(1,2)}, \ldots, L_{(1,6)}$. On the other hand, at the end of the recursion for $k = 3$ we have a total of $24$ lunes, ranging from $L_{(1,1,1)}$ up to $L_{(1,6,4)}$; notice that $L_{(1,j,k)}$ is always the result of a subdivision of $L_{(1,j)}$.}
	\label{fig:luneSubdivisionExample}
	\end{figure}
	
	\def\myScaleVar{1}

This subdivision process is illustrated by Figure~\ref{fig:luneSubdivisionExample}. As a result of this construction, we obtain both a sequence $\{m_j\}_{j \geq 1}$ and a family of lunes $\{L_\omega\}_{\omega \in \Omega}$ indexed by words with respect to the aforementioned sequence. In what follows, let $L_\omega = L(f_\omega, g_\omega)$ for all $\omega \in \Omega$. 

\begin{rem}

The following properties hold for the family $\{L_\omega\}_{\omega \in \Omega}$ of lunes:
\begin{enumerate}[label=\upshape(\alph*)] 

\item $\|L_\omega\|_{|\omega|} \leq \epsilon_{|\omega|}$;

\item  $f_{(1)} = f$, $g_{(1)} = g$;

\item $f_{\concat{\omega}{(1)}} = f_\omega$, $g_{\concat{\omega}{(m_{|\omega|+1})}} = g_\omega$;   

\item $g_{\concat{\omega}{ (i)}} = f_{\concat{\omega}{ (i+1)}}$ for $i=1,\ldots, m_{|\omega|+1} - 1$. In other words, $g_\omega = f_{\omega^+}$ for all $\omega \in \Omega^\ast$.

\end{enumerate}
\end{rem}

\begin{lemma}
\label{lemma:controlledNorm}
If a word $\omega \in \Omega$ has length $n$, then for each $1 \leq \ell \leq m_{n}$, 
$$\|f_{\concat{\omega}{(\ell)}} - f_\omega\|_{n} < \epsilon_{n+1} + \epsilon_n $$
\end{lemma}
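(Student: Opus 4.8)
The plan is to bound the distance $\|f_{\concat{\omega}{(\ell)}} - f_\omega\|_n$ by telescoping through the slices. Recall from the construction that $L_\omega$ was first $n_{n+1}$-sliced, producing functions $h_0 = f_\omega, h_1, \ldots, h_{n_{n+1}} = g_\omega$ with $h_i = \left(1 - \frac{i}{n_{n+1}}\right)f_\omega + \frac{i}{n_{n+1}} g_\omega$, and then each slice $L_\omega^i = L(h_{i-1}, h_i)$ was bipartitioned. The word $\concat{\omega}{(\ell)}$ has length $n+1$, so its floor function $f_{\concat{\omega}{(\ell)}}$ is the floor of one of these bipartition halves: if $\ell = 2i-1$ then $f_{\concat{\omega}{(\ell)}} = h_{i-1}$ (the floor of $L_\omega^i$ is the floor of $L_\omega^{i,1}$), and if $\ell = 2i$ then $f_{\concat{\omega}{(\ell)}}$ is the intermediate function $h$ of the bipartition of $L_\omega^i$, which lies ``between'' $h_{i-1}$ and $h_i$.

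First I would handle the even case $\ell = 2i$, which is the one that actually needs work. Write $f_{\concat{\omega}{(2i)}} - f_\omega = \big(f_{\concat{\omega}{(2i)}} - h_{i-1}\big) + \big(h_{i-1} - f_\omega\big)$. The second term is $h_{i-1} - h_0 = \frac{i-1}{n_{n+1}}(g_\omega - f_\omega)$, whose $C^n$ norm is at most $\|g_\omega - f_\omega\|_n = \|L_\omega\|_n \le \epsilon_n$ by the first property in the Remark (since $|\omega| = n$). The first term is $f_{\concat{\omega}{(2i)}} - h_{i-1} = g_{\concat{\omega}{(2i-1)}} - f_{\concat{\omega}{(2i-1)}}$, which is exactly $\|L_{\concat{\omega}{(2i-1)}}\|_n$ evaluated as a function norm; but $\concat{\omega}{(2i-1)}$ is a word of length $n+1$, so by the Remark this is at most $\epsilon_{n+1}$. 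Actually, more carefully: $f_{\concat{\omega}{(2i)}} = g_{\concat{\omega}{(2i-1)}}$ by property (d) of the Remark, and $f_{\concat{\omega}{(2i-1)}} = h_{i-1}$ by property (c) applied to the slice (the floor of the bipartition of $L_\omega^i$ equals the floor of $L_\omega^i$, which is $h_{i-1}$). Hence $f_{\concat{\omega}{(2i)}} - h_{i-1} = g_{\concat{\omega}{(2i-1)}} - f_{\concat{\omega}{(2i-1)}}$ and its $C^n$ norm is $\|L_{\concat{\omega}{(2i-1)}}\|_n \le \|L_{\concat{\omega}{(2i-1)}}\|_{n+1} \le \epsilon_{n+1}$. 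The triangle inequality then gives the bound $\epsilon_{n+1} + \epsilon_n$.

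For the odd case $\ell = 2i-1$, we have simply $f_{\concat{\omega}{(2i-1)}} = h_{i-1}$, so $f_{\concat{\omega}{(2i-1)}} - f_\omega = \frac{i-1}{n_{n+1}}(g_\omega - f_\omega)$ with $C^n$ norm at most $\epsilon_n < \epsilon_{n+1} + \epsilon_n$, which is even stronger. One subtlety to keep straight is the indexing: for $\ell = m_n = 2 n_{n+1}$ we have $i = n_{n+1}$, and then $h_{i-1} = h_{n_{n+1}-1}$, so the estimate still goes through with $\frac{i-1}{n_{n+1}} < 1$. The main (minor) obstacle is really just bookkeeping: correctly identifying which $h_{i-1}$ or bipartition half the function $f_{\concat{\omega}{(\ell)}}$ is, and invoking the right instance of the Remark — in particular being careful that $\|L_{\concat{\omega}{(2i-1)}}\|_n \le \|L_{\concat{\omega}{(2i-1)}}\|_{n+1}$ since $C^n$ norms are monotone in $k$. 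Once the identifications are in place, the estimate is immediate from the triangle inequality.
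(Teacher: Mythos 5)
Your proof is correct and follows essentially the same route as the paper's: for odd $\ell$ you identify $f_{\concat{\omega}{(\ell)}}$ with a slicing function $h_{i-1}$ and bound via $\|L_\omega\|_n$, and for even $\ell$ you split at the intermediate floor (your $h_{i-1}$ is exactly the paper's $f_{\concat{\omega}{(\ell-1)}}$) and bound the two pieces by $\|L_{\concat{\omega}{(\ell-1)}}\|_{n+1}$ and $\|L_\omega\|_n$ respectively. The only difference is that you unwind the identifications more explicitly; one small point worth tightening is that you end with $\le \epsilon_{n+1}+\epsilon_n$ while the lemma asserts strict inequality -- the strictness comes from the construction's \eqref{eq:sufficientlyLargeSlicing}, which makes $\|L_{\omega'}\|_{n+1}<\epsilon_{n+1}$ strict for $|\omega'|=n+1$.
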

\begin{proof}
If $\ell$ is odd, then $f_{\concat{\omega}{(\ell)}}$ appeared after a slicing of $L_\omega$, hence 
$$\|f_{\concat{\omega}{(\ell)}} - f_\omega\|_n \leq \|L_\omega\|_n < \epsilon_n.$$
If $\ell$ is even, then
\begin{align*}
\|f_{\concat{\omega}{(\ell)}} - f_\omega\|_n &\leq \|f_{\concat{\omega}{(\ell)}} - f_{\concat{\omega}{(\ell-1)}}\|_n + \|f_{\concat{\omega}{(\ell-1)}} - f_\omega\|_n \\
&\leq \|g_{\concat{\omega}{(\ell-1)}} - f_{\concat{\omega}{(\ell-1)}}\|_n + \|f_{\concat{\omega}{(\ell-1)}} - f_\omega\|_n \\
& \leq \|L_{\concat{\omega}{(\ell-1)}}\|_{n+1} + \|f_{\concat{\omega}{(\ell-1)}} - f_\omega\|_n \\
& < \epsilon_{n+1} + \epsilon_n.  \qedhere
\end{align*} 
\end{proof}

From this point on, we'll assume that from an initial lune $L = L(f,g)$ and a summable positive sequence $\{\epsilon_n\}_{n \geq 2}$, we have obtained, though the procedure described here, a set of words $\Omega$ with respect to a sequence $\{m_k\}_{k \geq 1}$, and a family of lunes $\{L_\omega\}_{\omega \in \Omega}$, with all the properties that were mentioned. 
\subsection{A helpful Cantor set}
Now that we have described the basic subdivision processes, we may begin to describe some auxiliary constructions that play an important part in the definition of a Peano curve (with some special properties) that will cover the initial lune $L$.

First, we will define a Cantor set $K$ through a family of closed intervals $\{J_\omega\}_{\omega \in \Omega}$ indexed by words with respect to $\{m_k\}_{k \geq 1}$. The open intervals that will be removed from $[0,1]$, $\{G_\omega\}_{\omega \in \Omega^\ast}$, indexed by words with successor, will also play an important role. 

First, set $J_{(1)} = [0,1]$. For each $k \geq 2$, assume that $J_\omega$ has been defined for all $\omega \in \Omega$ with $|\omega| = k-1$.
 For each such $\omega$, if $J_\omega = [\alpha, \beta]$ set 
\begin{align*}
J_{\concat{\omega}{(\ell)}} = \left[\alpha + \frac{2\ell-2}{2 m_k - 1}(\beta - \alpha),\alpha + \frac{2\ell-1}{2 m_k - 1}(\beta - \alpha)\right]&, \quad 1 \leq \ell \leq m_k \, ,\\
G_{\concat{\omega}{(\ell)}} = \left(\alpha + \frac{2\ell-1}{2 m_k - 1}(\beta - \alpha),\alpha + \frac{2\ell}{2 m_k - 1}(\beta - \alpha)\right)&, \quad 1 \leq \ell < m_k \, .
\end{align*}
Now set $$
K = \bigcap_{n \in \N} \, \bigcup_{\substack{\omega \in \Omega,\\ |\omega| = n}} J_\omega \, .
$$
Notice that $K$ is a Cantor set, and $[0,1] \setminus K = \bigcup_{\omega \in \Omega^\ast} G_\omega$. The significance of this set $K$ will become apparent later on, but the basic idea is as follows: the curve $\gamma$ that we construct in this section will be such that $\gamma(J_\omega) = \esslune{L_\omega}$ (see Definition~\ref{def:lune}). However, usually $\gamma(\sup J_\omega) \neq \gamma(\inf J_{\omega^+})$, so we connect these ``subcurves'' using $G_\omega$.

\begin{figure}[htb]
\def\svgwidth{\columnwidth}
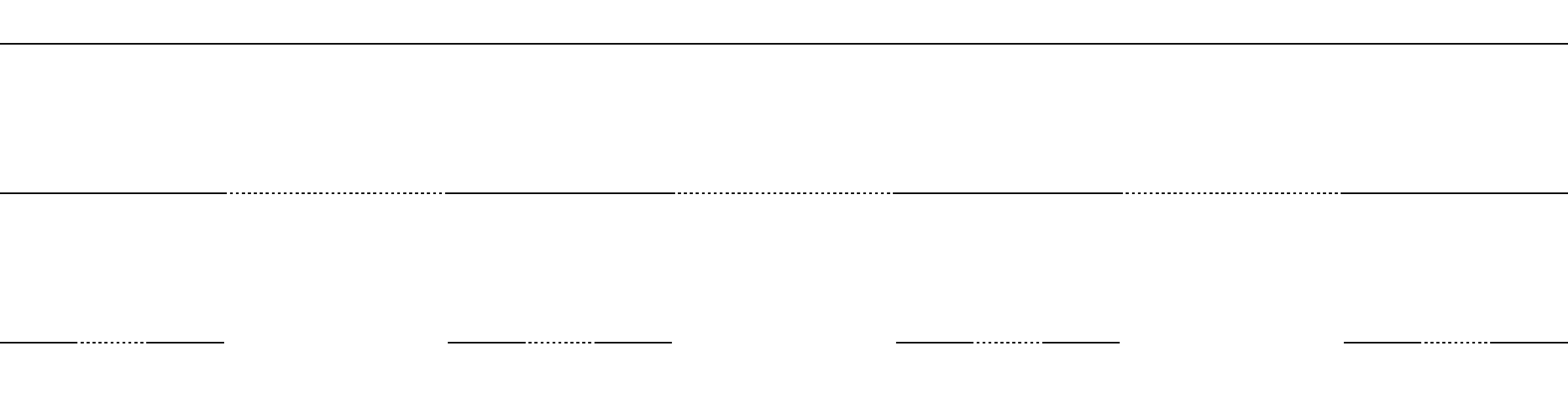%
\caption{Illustration of the first few steps in the construction of $K$. Here $m_2 = 4$ and $m_3 = 2$.}%
\label{fig:cantor}%
\end{figure}

\subsection{The family of ceiling functions}
A point $t \in [0,1]$ belongs to the Cantor set $K$ if and only if there exists a sequence $\{\omega_n\}_{n \geq 1}$ of words with $\omega_1 = (1)$, $\omega_{n+1} = \concat{\omega_n}{(\ell)}$ for some $\ell \in \N$ and such that $t \in J_{\omega_n}$ for each $n$. Moreover, this sequence is unique, and we call it \emph{the defining sequence of} $t$ \emph{in} $K$. For what follows, recall the notation that $L_\omega = L(f_\omega, g_\omega)$.

\begin{lemma}
\label{lemma:uniformlyCauchy}
Let $t \in K$ and let $\{\omega_n\}_{n \geq 1}$ be the defining sequence of $t$ in $K$. Then, for each $k \geq 0$, the sequences of functions $\{f_{\omega_n}^{(k)}\}_n$ and $\{g_{\omega_n}^{(k)}\}_n$ are uniformly Cauchy and
$$ \lim_{n \to \infty} f_{\omega_n}^{(k)} = \lim_{n \to \infty} g_{\omega_n}^{(k)}.$$
\end{lemma}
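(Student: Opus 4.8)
The idea is that each step of the defining sequence changes $f_{\omega_n}$ by at most a geometrically-summable amount in the relevant $C^k$ norm, so Cauchyness is immediate; and the gap $g_{\omega_n} - f_{\omega_n} = $ the bottom-to-top difference of $L_{\omega_n}$ shrinks to $0$, forcing the two limits to agree.

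\begin{proof}
Fix $k \geq 0$. For $n > k$ we have $\omega_{n+1} = \concat{\omega_n}{(\ell)}$ for some $1 \le \ell \le m_{n+1}$, and $|\omega_n| = n$, so Lemma~\ref{lemma:controlledNorm} gives
$$
\| f_{\omega_{n+1}} - f_{\omega_n} \|_n < \epsilon_{n+1} + \epsilon_n.
$$
Since $n \ge k$, the $C^k$ norm is dominated by the $C^n$ norm, hence $\| f_{\omega_{n+1}}^{(k)} - f_{\omega_n}^{(k)} \|_0 \le \| f_{\omega_{n+1}} - f_{\omega_n} \|_n < \epsilon_{n+1} + \epsilon_n$ for all $n \ge k$. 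As $\sum_n \epsilon_n < \infty$, the tail sums $\sum_{n \ge N} (\epsilon_{n+1}+\epsilon_n)$ tend to $0$, so $\{ f_{\omega_n}^{(k)} \}_n$ is uniformly Cauchy on $[0,1]$. For the $g$'s, recall from the Remark that $g_{\omega_n} = f_{\omega_n^+}$ whenever $\omega_n \in \Omega^\ast$; in any case $g_{\omega_n} - f_{\omega_n}$ is exactly the defining difference of the lune $L_{\omega_n}$, so $\| g_{\omega_n} - f_{\omega_n} \|_n = \| L_{\omega_n} \|_n \le \epsilon_n$ by property~(a) of the Remark. Writing $g_{\omega_n}^{(k)} = f_{\omega_n}^{(k)} + (g_{\omega_n} - f_{\omega_n})^{(k)}$ and using $\| (g_{\omega_n}-f_{\omega_n})^{(k)} \|_0 \le \epsilon_n \to 0$ (again using $n \ge k$), we conclude that $\{ g_{\omega_n}^{(k)} \}_n$ is also uniformly Cauchy and that $\lim_n f_{\omega_n}^{(k)} = \lim_n g_{\omega_n}^{(k)}$.
\end{proof}

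The only mild subtlety is the bookkeeping on indices: Lemma~\ref{lemma:controlledNorm} controls the $C^{|\omega|}$ norm, which is an increasingly strong norm, so one must restrict to $n \ge k$ before comparing with the fixed $C^k$ norm; everything else is a direct application of summability of $\{\epsilon_n\}$ and of the norm bound $\|L_\omega\|_{|\omega|} \le \epsilon_{|\omega|}$ already recorded. I do not expect any real obstacle here — the content of the construction (choosing $n_\omega$ large in Step~\ref{step:choiceOfnk}) was precisely arranged so that this lemma falls out mechanically.
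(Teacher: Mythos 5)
Your proof is correct and follows essentially the same route as the paper's: both invoke Lemma~\ref{lemma:controlledNorm} together with $|\omega_n|=n$ to bound consecutive differences $\|f_{\omega_{n+1}}-f_{\omega_n}\|_k$ by $\epsilon_{n+1}+\epsilon_n$ for $n\ge k$, use summability of $\{\epsilon_n\}$ to deduce uniform Cauchyness, and then use the bound $\|L_{\omega_n}\|_n\le\epsilon_n$ to show $g_{\omega_n}^{(k)}-f_{\omega_n}^{(k)}\to 0$ uniformly, forcing the limits to coincide.
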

\begin{proof}
Using Lemma~\ref{lemma:controlledNorm} and noticing that $|\omega_n| = n$, we see that whenever $n \geq k$ we have
$$\|f_{\omega_{n+1}} - f_{\omega_n}\|_k \leq \|f_{\omega_{n+1}} - f_{\omega_n}\|_n < \epsilon_{n+1} + \epsilon_n.$$

If $\epsilon > 0$ is fixed and $N \geq k$ is such that $\sum_{n \geq N} \epsilon_n < \frac{\epsilon}{2}$, then if $m > n > N$,
$$
\|f_{\omega_{m}} - f_{\omega_n}\|_k 
\leq \sum_{j=n}^{m-1} \|f_{\omega_{j+1}} - f_{\omega_j}\|_k 
< 2 \sum_{j=n}^{m} \epsilon_j 
< \epsilon.
$$
Since $\|h^{(k)}\|_0 \leq \|h\|_k$, it follows that $\{f_{\omega_n}^{(k)}\}_n$ is a uniformly Cauchy sequence.
Notice that
$$
\|g_{\omega_n}^{(k)} - f_{\omega_n}^{(k)}\|_0 \leq \|L_{\omega_n}\|_k < \epsilon_n \, .
$$
Hence, the sequence $\{g_{\omega_n}^{(k)}\}_n$ is also uniformly Cauchy, and both sequences have the same limit.
\end{proof}
\begin{defn}
For $t \in K$, let $\{\omega_n\}_{n \geq 1}$ be the defining sequence of $t$ in $K$. The \emph{ceiling function at} $t$ is 
$$F_t = \lim_{n \to \infty} f_{\omega_n} = \lim_{n \to \infty} g_{\omega_n}.$$ 
\end{defn}

A direct consequence of Lemma~\ref{lemma:uniformlyCauchy} is that, for each fixed $t$, $F_t$ is a $C^\infty$ function and $F_t^{(k)} = \lim_{n \to \infty} f_{\omega_n}^{(k)} = \lim_{n \to \infty} g_{\omega_n}^{(k)}.$ 

\begin{lemma}
\label{lemma:ceilingOrder}
If $t,s \in K, t \leq s$, then $F_t \leq F_s$.
\end{lemma}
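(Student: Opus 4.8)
The plan is to reduce the claim $F_t \le F_s$ for $t \le s$ in $K$ to a statement about the approximating functions $f_{\omega_n}$, using the monotonicity that is built into the subdivision construction. Recall that $F_t = \lim_n f_{\omega_n} = \lim_n g_{\omega_n}$ where $\{\omega_n\}$ is the defining sequence of $t$, and similarly $F_s = \lim_n f_{\sigma_n}$ for the defining sequence $\{\sigma_n\}$ of $s$. Since pointwise limits preserve non-strict inequalities, it suffices to show that for every sufficiently large $n$ one has $f_{\omega_n} \le g_{\sigma_n}$ (or some comparable pair of approximants), and then pass to the limit.

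The key structural fact I would extract first is that the family $\{L_\omega\}$ respects the natural order coming from both the slicing and the bipartition: within any slicing $\{L^1,\dots,L^{n_k}\}$ of a lune $L(f,g)$ we have $f = h_0 \le h_1 \le \dots \le h_{n_k} = g$, so the floor and ceiling functions of the pieces are monotone in the index; and within a bipartition $\{L(f,h), L(h,g)\}$ we have $f \le h \le g$ because $0 \le \phi_{a,b} \le 1$. Consequently, if $\omega$ precedes $\omega'$ in the lexicographic order on words of the same length (with the ordering on each letter being the usual order on $\{1,\dots,m_j\}$), then $g_\omega \le f_{\omega'}$, and more generally $f_\omega \le g_{\omega'}$. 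I would state and prove this as a short sublemma: for words $\omega, \omega'$ of the same length $n$, if $\omega \preceq \omega'$ lexicographically then $f_\omega \le f_{\omega'}$ and $g_\omega \le g_{\omega'}$.

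Next I would connect this lexicographic order on words to the order on $[0,1]$ via the intervals $J_\omega$. From the explicit formula for $J_{\concat{\omega}{(\ell)}}$, the intervals $J_{\concat{\omega}{(1)}}, \dots, J_{\concat{\omega}{(m_k)}}$ sit inside $J_\omega$ in increasing order of $\ell$ (separated by the gaps $G_{\concat{\omega}{(\ell)}}$). Hence for points $t \le s$ in $K$ with defining sequences $\{\omega_n\}$, $\{\sigma_n\}$, at each level $n$ the word $\omega_n$ precedes or equals $\sigma_n$ in the lexicographic order (one proves this by induction on $n$: if $\omega_n = \sigma_n$ then the letters at level $n+1$ are ordered because $t \in J_{\omega_{n+1}}$, $s \in J_{\sigma_{n+1}}$ lie in increasing subintervals; if $\omega_n \prec \sigma_n$ already then appending any letters keeps the order). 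Applying the sublemma gives $f_{\omega_n} \le f_{\sigma_n}$ for all $n$, and letting $n \to \infty$ yields $F_t = \lim f_{\omega_n} \le \lim f_{\sigma_n} = F_s$.

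I expect the main obstacle to be the bookkeeping in the sublemma rather than any real difficulty: one must carefully handle the interaction between the slicing step (which produces $n_k$ pieces) and the bipartition step (which then doubles each), so that the letter $\ell \in \{1,\dots,m_k\} = \{1,\dots,2n_k\}$ corresponds to piece $\lceil \ell/2 \rceil$ of the slicing together with a choice of left/right half in the bipartition, and to verify that in all cases $\ell \le \ell'$ implies $f_{\concat{\omega}{(\ell)}} \le f_{\concat{\omega}{(\ell')}}$ and likewise for $g$. Once the one-letter case is established, the general lexicographic statement follows by a routine induction on word length, comparing letter by letter. Everything else is a limiting argument already licensed by Lemma~\ref{lemma:uniformlyCauchy}.
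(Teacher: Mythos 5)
Your proposal is correct and follows essentially the same route as the paper. Both arguments hinge on the same two structural facts: that floor/ceiling functions are nested under subdivision, and that among siblings $\omega\!*\!(i)$ with increasing $i$ the ceiling of one equals the floor of the next (so $g_{\omega*(j)} \le f_{\omega*(\ell)}$ for $j < \ell$); the paper applies these directly at the first level $N$ where the two defining sequences diverge, obtaining $f_{\omega_{t,n}} \le g_{\omega_{t,N}} \le f_{\omega_{s,N}} \le f_{\omega_{s,n}}$ for $n > N$ and then passing to the limit, whereas you package the same facts into an explicit lexicographic-monotonicity sublemma before taking the limit, which is a minor difference in organization rather than a different argument.
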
  
\begin{proof}
Let \{$\omega_{t,n}\}_{n \geq 1}, \{\omega_{s,n}\}_{n \geq 1}$ be the defining sequences of $t$ and $s$ in $K$, respectively. Notice that $t < s$ if and only if for some $N > 1$, $\omega_{t,k} = \omega_{s,k}$ whenever $k < N$ but $\omega_{t,N} = \concat{\omega_{t,N-1}}{(j)}, \omega_{s,N} = \concat{\omega_{s,N-1}}{(\ell)}$ with $j < \ell$. Now clearly, for each $n > N$,
$$ f_{\omega_{t,n}} \leq g_{\omega_{t,N}} \leq f_{\omega_{s,N}} \leq f_{\omega_{s,n}},$$ because $L_{\omega_{t,n}}$ is a subdivision of $L_{\omega_{t,N}}$.   
\end{proof}
\begin{prop}
\label{prop:continuityCeilingAtK}
The function $t \in K \mapsto F_t \in C^\infty([0,1])$ is continuous.
\end{prop}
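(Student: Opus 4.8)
The plan is to use two facts: that the estimate underlying Lemma~\ref{lemma:uniformlyCauchy} is \emph{uniform} in $t$, and that two points of $K$ that are sufficiently close have defining sequences agreeing on a long initial segment.

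First I would record the uniform bound
\begin{equation*}
\|F_t - f_{\omega_{t,n}}\|_k \le 2 \sum_{j \ge n} \epsilon_j \qquad (t \in K, \ n \ge k),
\end{equation*}
where $\{\omega_{t,n}\}_{n \ge 1}$ denotes the defining sequence of $t$ in $K$. This comes directly from the proof of Lemma~\ref{lemma:uniformlyCauchy}: for $m > n \ge k$ one has
$\|f_{\omega_{t,m}} - f_{\omega_{t,n}}\|_k \le \sum_{j=n}^{m-1}\|f_{\omega_{t,j+1}} - f_{\omega_{t,j}}\|_k < 2\sum_{j\ge n}\epsilon_j$
by Lemma~\ref{lemma:controlledNorm} together with $\|\cdot\|_k \le \|\cdot\|_j$ for $j \ge k$, and letting $m \to \infty$ gives the claim since $F_t = \lim_m f_{\omega_{t,m}}$. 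Given $k \in \N$ and $\epsilon > 0$, I would then fix $n \ge k$ with $4\sum_{j \ge n}\epsilon_j < \epsilon$, which is possible since $\sum \epsilon_j < \infty$.

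Next I would observe that the intervals $\{J_\omega : |\omega| = n\}$ are finitely many, compact, and pairwise disjoint: by construction each $J_{\omega'}$ of length $n-1$ is cut into the subintervals $J_{\omega'*(\ell)}$, which are separated by the nonempty open gaps $G_{\omega'*(\ell)}$, while distinct length-$(n-1)$ intervals are disjoint by induction. Hence
$$
\delta := \min\{\operatorname{dist}(J_\omega, J_{\omega'}) : |\omega| = |\omega'| = n,\ \omega \ne \omega'\} > 0 .
$$
For $t, s \in K$ with $|t-s| < \delta$, the unique length-$n$ words with $t \in J_{\omega_{t,n}}$ and $s \in J_{\omega_{s,n}}$ must coincide (otherwise $|t-s| \ge \delta$), so $\omega_{t,n} = \omega_{s,n}$ and in particular $f_{\omega_{t,n}} = f_{\omega_{s,n}}$; therefore, by the triangle inequality and the uniform bound,
\begin{equation*}
\|F_t - F_s\|_k \le \|F_t - f_{\omega_{t,n}}\|_k + \|f_{\omega_{s,n}} - F_s\|_k \le 4 \sum_{j \ge n}\epsilon_j < \epsilon .
\end{equation*}
Since $k$ and $\epsilon$ are arbitrary, this proves that $t \mapsto F_t$ is (even uniformly) continuous into $C^\infty([0,1])$.

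The argument is essentially routine once this viewpoint is adopted; the only step requiring a little care is the uniformity of the bound $\|F_t - f_{\omega_{t,n}}\|_k \le 2\sum_{j\ge n}\epsilon_j$ over all $t \in K$. This is automatic, however, because Lemma~\ref{lemma:controlledNorm} bounds $\|f_{\omega*(\ell)} - f_\omega\|_{|\omega|}$ by a quantity depending only on the length $|\omega|$ and not on the word itself, so no genuine obstacle arises.
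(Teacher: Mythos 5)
Your proof is correct and follows essentially the same route as the paper's: both arguments pick $n \ge k$ large enough that $\sum_{j \ge n}\epsilon_j$ is small, observe (via Lemma~\ref{lemma:controlledNorm}) the uniform bound $\|F_t - f_{\omega_{t,n}}\|_k \le 2\sum_{j\ge n}\epsilon_j$, and then note that nearby points of $K$ lie in a common level-$n$ interval $J_\omega$, forcing their defining sequences to agree up to length $n$. The only cosmetic difference is that the paper takes $\delta$ to be the common length of the level-$n$ intervals $J_\omega$ (which by the construction equals the width of the smallest gaps between them), whereas you define $\delta$ directly as the minimal distance between distinct $J_\omega$'s; these are the same quantity and lead to identical conclusions.
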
 
\begin{proof}
Let $k \in \N$, fix $\epsilon > 0$, and let $N > k$ be such that 
$$ \sum_{n=N}^\infty \epsilon_n < \frac{\epsilon}{4}.$$

Recall that the length of the interval $J_\omega$ is the same for every word $\omega$ such that $|\omega| = N$; call this length $\delta_N$. If $t,s \in K$, with $|t-s| < \delta_N$, then clearly $t,s \in J_\omega$ for some $\omega$ with $|\omega| = N$.
Let $\omega_{t,n}, \omega_{s,n}$ be the defining sequences of $t$ and $s$ in $K$ (notice that $\omega_{t,N} = \omega_{s,N} = \omega$). Since by Lemma~\ref{lemma:controlledNorm}
$$\|f_{\omega_{t,n}} - f_{\omega}\|_k \leq \sum_{j=N}^{n-1} \|f_{\omega_{t,j+1}} - f_{\omega_{t,j}}\|_k < 2 \sum_{j=N}^{n-1} \epsilon_j $$
whenever $n > N$, it follows that by making $n \to \infty$ we have 
$$ \| F_t - f_\omega \|_k \leq 2 \sum_{j=N}^\infty \epsilon_j < \frac{\epsilon}{2}$$
and, analogously, $\| F_s - f_\omega \|_k < \frac{\epsilon}{2}.$ Therefore,
$$\|F_t - F_s\|_k \leq \| F_t - f_\omega \|_k + \| f_\omega - F_s \|_k < \epsilon,$$
which completes the proof.   
\end{proof} 

\begin{lemma}
\label{lemma:ceilingEndsMeet}
Given $\omega \in \Omega^\ast$, suppose $G_\omega = (\alpha,\beta)$. Then $\alpha, \beta \in K$ and $F_\alpha = F_\beta$.
\end{lemma}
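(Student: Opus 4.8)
The plan is to unwind the definitions of $G_\omega$ and of the ceiling functions, after which the statement reduces to the bookkeeping relations recorded in the Remark. Write $\omega = (i_1,\dots,i_n) \in \Omega^\ast$, so that $i_n < m_n$, and put $\omega^+ = (i_1,\dots,i_{n-1},i_n+1)$. Comparing the formulas defining $J_{\concat{\omega''}{(\ell)}}$, $J_{\concat{\omega''}{(\ell+1)}}$ and $G_{\concat{\omega''}{(\ell)}}$, where $\omega = \concat{\omega''}{(\ell)}$ and $|\omega''| = n-1$, one reads off immediately that
$$\alpha = \sup J_\omega \quad\text{and}\quad \beta = \inf J_{\omega^+}.$$

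First I would show $\alpha \in K$ and identify its defining sequence. The elementary point is that passing to the \emph{last} child of an interval preserves its right endpoint: from the displayed formula for $J_{\concat{\eta}{(\ell)}}$ with $\ell = m_{|\eta|+1}$ one gets $\sup J_{\concat{\eta}{(m_{|\eta|+1})}} = \sup J_\eta$. Hence the sequence
$$\omega_1 = (i_1),\ \dots,\ \omega_n = \omega,\qquad \omega_{n+j} = \concat{\omega}{(m_{n+1},\dots,m_{n+j})}\ \ (j \ge 1)$$
(which is legitimate since $m_1 = 1$ forces $i_1 = 1$, so $(i_1) = (1)$) satisfies $\omega_{k+1} = \concat{\omega_k}{(\ell_k)}$ for suitable $\ell_k$, and $\alpha \in J_{\omega_k}$ for every $k$: for $k \le n$ because $J_\omega \subseteq J_{\omega_k}$, and for $k > n$ because $\sup J_{\omega_k} = \sup J_\omega = \alpha$. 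Thus $\alpha \in K$ and $\{\omega_k\}$ is its (unique) defining sequence. Symmetrically, since passing to the \emph{first} child preserves the left endpoint, i.e.\ $\inf J_{\concat{\eta}{(1)}} = \inf J_\eta$, the point $\beta$ lies in $K$ with defining sequence $(i_1),\dots,\omega^+,\concat{\omega^+}{(1)},\concat{\omega^+}{(1,1)},\dots$.

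Next I would evaluate the two ceiling functions along these sequences. Along the defining sequence of $\alpha$, property (c) of the Remark, namely $g_{\concat{\eta}{(m_{|\eta|+1})}} = g_\eta$, gives $g_{\omega_{n+j}} = g_{\omega_{n+j-1}} = \dots = g_\omega$ for all $j \ge 0$, so $F_\alpha = \lim_k g_{\omega_k} = g_\omega$. Likewise, along the defining sequence of $\beta$, property (c) in the form $f_{\concat{\eta}{(1)}} = f_\eta$ gives $f_{\concat{\omega^+}{(1,\dots,1)}} = f_{\omega^+}$, so $F_\beta = \lim_k f_{\omega_k} = f_{\omega^+}$. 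Finally, property (d) of the Remark says exactly that $g_\omega = f_{\omega^+}$ whenever $\omega \in \Omega^\ast$, whence $F_\alpha = g_\omega = f_{\omega^+} = F_\beta$, as claimed. There is no genuine obstacle in this argument: the only step demanding a little care is the identification of $\alpha$ and $\beta$ with the endpoints of $J_\omega$ and $J_{\omega^+}$ together with the observation that iterating ``take the last (resp.\ first) child'' fixes the corresponding endpoint; once that is in place, the sequences $\{g_{\omega_k}\}$ and $\{f_{\omega_k}\}$ are eventually constant, and the conclusion is immediate.
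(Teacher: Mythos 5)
Your proof is correct and follows essentially the same route as the paper: identify $\alpha = \sup J_\omega$ and $\beta = \inf J_{\omega^+}$, observe that iterating ``last child'' (resp.\ ``first child'') fixes the right (resp.\ left) endpoint so $\alpha,\beta \in K$ with explicit defining sequences, and then read off $F_\alpha = g_\omega = f_{\omega^+} = F_\beta$ from the Remark. You even correct a small typo in the paper, which writes the first-child word with index $0$ rather than $1$.
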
 
\begin{proof}
Suppose $|\omega| = N$. By the definition of $K$, it is clear that $\alpha \in J_\omega, \beta \in J_{\omega^+}.$ In fact, for each $n > N$ we have
$$\alpha \in J_{\concat{\omega}{(m_{N+1},m_{N+2}, \ldots, m_n)}}, \beta \in J_{\concat{\omega^+}{\underbrace{\scriptstyle(0,0, \ldots, 0)}_{n-N}}}.$$
Since $$g_{\concat{\omega}{(m_{N+1},m_{N+2}, \ldots, m_n)}} = g_{\omega} = f_{\omega^+} = f_{\concat{\omega^+}{(0,0, \ldots, 0)}},$$ 
it follows that
$$F_\alpha = \lim_{n \to \infty} g_{\omega_{\alpha,n}} = \lim_{n \to \infty} f_{\omega_{\beta,n}} = F_\beta,$$
where $\omega_{\alpha,n}$ and $\omega_{\beta,n}$ are the elements of the defining sequences of $\alpha$ and $\beta$ in $K$.
\end{proof}
What Lemma~\ref{lemma:ceilingEndsMeet} implies is that the function $t \mapsto F_t$ can be extended to the interval $[0,1]$ in a natural way: for a point $t \notin K$, there exists a unique $G_\omega = (\alpha, \beta)$ such that $t \in G_\omega$. Then $F_t = F_\alpha = F_\beta$ is the ceiling function at $t$. 

\begin{prop} \label{prop:continuityCeiling}
The function $t \in [0,1] \mapsto F_t \in C^\infty([0,1])$ is continuous.
\end{prop}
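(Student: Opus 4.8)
The plan is to reduce continuity on all of $[0,1]$ to the already-established continuity on $K$ (Proposition~\ref{prop:continuityCeilingAtK}), using the fact that $t \mapsto F_t$ is locally constant on the complement $[0,1] \setminus K = \bigcup_{\omega \in \Omega^\ast} G_\omega$ together with the matching condition $F_\alpha = F_\beta$ at the endpoints of each $G_\omega$ from Lemma~\ref{lemma:ceilingEndsMeet}. The key point is a uniform-continuity style estimate: given $k \in \N$ and $\epsilon > 0$, I want a single $\delta > 0$ such that $|t - s| < \delta$ implies $\|F_t - F_s\|_k < \epsilon$, regardless of whether $t,s$ lie in $K$ or not.

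First I would fix $k$ and $\epsilon$, and choose $N > k$ with $2\sum_{n \ge N} \epsilon_n < \epsilon/2$ (sharpening the bound from Proposition~\ref{prop:continuityCeilingAtK} if needed); let $\delta_N$ be the common length of the intervals $J_\omega$ with $|\omega| = N$, and set $\delta = \delta_N$. The main step is the following claim: if $t \in [0,1]$ and $|\omega| = N$ is a word with $t \in J_\omega$ (such $\omega$ exists whenever $t \in K$; for $t \notin K$ take the word with $J_\omega$ the nearest level-$N$ interval on the appropriate side, using that $G_\omega$'s endpoints lie in level-$N$ intervals), then $\|F_t - f_\omega\|_k < \epsilon/2$. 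For $t \in K$ this is exactly the estimate inside the proof of Proposition~\ref{prop:continuityCeilingAtK}. For $t \notin K$, write $t \in G_\rho = (\alpha,\beta)$; then $F_t = F_\alpha$ by definition of the extension, and $\alpha \in K$, so the $K$-case applies to $\alpha$ — but I must check that $\alpha$ and $t$ can be assigned a \emph{common} level-$N$ word, or at least two level-$N$ words $\omega, \omega'$ with $\|f_\omega - f_{\omega'}\|_k$ small. The cleanest route: every endpoint of every gap $G_\rho$ is itself an endpoint of a level-$m$ interval $J_\omega$ for all $m$ past $|\rho|$, and consecutive level-$N$ intervals $J_\omega, J_{\omega^+}$ (those flanking a level-$N$ gap) have $f_{\omega^+} = g_\omega$, so $\|f_{\omega^+} - f_\omega\|_k = \|L_\omega\|_k < \epsilon_N < \epsilon/2$ when $N > k$; more generally nearby level-$N$ words differing only through gaps contribute a controlled telescoping sum. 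So for $|t-s| < \delta$ I can find level-$N$ words $\omega \ni$-associated to $t$ and $\omega'$ associated to $s$ that are equal or adjacent, giving $\|F_t - F_s\|_k \le \|F_t - f_\omega\|_k + \|f_\omega - f_{\omega'}\|_k + \|f_{\omega'} - F_s\|_k < \epsilon/2 + \epsilon/2 + \dots$; I would tune the constants (e.g. choose $N$ with $2\sum_{n\ge N}\epsilon_n < \epsilon/3$) so the total is below $\epsilon$.

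The main obstacle is purely bookkeeping: correctly associating to an arbitrary $t \in [0,1]$ a level-$N$ word, handling the case where $t$ sits in a gap $G_\rho$ with $|\rho| < N$ (so that $t$ is "far" from $K$ at scale $N$ but the gap itself is a single point as far as $F$ is concerned, since $F$ is constant on $\overline{G_\rho}$), and verifying that two points within distance $\delta_N$ get associated to level-$N$ words that are either identical or separated only by level-$N$ gaps — which is where the geometry of the construction (each $J_\omega$ at level $N-1$ is subdivided into $m_N$ intervals $J_{\omega*(\ell)}$ interleaved with $m_N - 1$ gaps $G_{\omega*(\ell)}$, all of comparable size) must be invoked. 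Once that association is pinned down, the estimates are immediate from Lemma~\ref{lemma:controlledNorm} and the summability of $\{\epsilon_n\}$, exactly as in Proposition~\ref{prop:continuityCeilingAtK}. An alternative, slicker packaging avoids explicit words: observe that $F : [0,1] \to C^\infty([0,1])$ is the unique continuous extension of $F|_K$ that is constant on the closure of each complementary gap (such an extension exists and is continuous because $F|_K$ is continuous and, by Lemma~\ref{lemma:ceilingEndsMeet}, agrees at the two endpoints of every gap — a standard fact about extending uniformly continuous functions off a closed set by "filling in flatly"), and then continuity on $[0,1]$ is automatic; I would likely present this version, relegating the $\epsilon$–$\delta$ computation to a remark.
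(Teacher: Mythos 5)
Your proposal is correct, and the ``slicker'' version you say you would actually present is essentially the paper's argument: reduce to the already-established uniform continuity on $K$ by exploiting that $F$ is constant on the closure of each gap, with matching endpoint values supplied by Lemma~\ref{lemma:ceilingEndsMeet}. The paper does this even more compactly than invoking a general ``flat extension'' fact: for $t<s$ with $|t-s|<\delta_N$ not both in $K$, either they lie in the same gap (so $F_t=F_s$ trivially), or one finds $\alpha,\beta\in K$ with $t\le\alpha\le\beta\le s$, $F_t=F_\alpha$, $F_s=F_\beta$ (take $\alpha$ to be the right endpoint of $t$'s gap if $t\notin K$, else $\alpha=t$; symmetrically for $\beta$), whence $|\alpha-\beta|\le|t-s|<\delta_N$ and Proposition~\ref{prop:continuityCeilingAtK} applies directly. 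Your first, word-based route is substantially over-engineered: the ``bookkeeping obstacle'' you identify -- associating level-$N$ words to points in gaps, worrying about whether two nearby points sit in adjacent level-$N$ intervals, handling gaps $G_\rho$ with $|\rho|<N$ -- simply dissolves once one pushes $t$ and $s$ inward to the nearest points of $K$, because $|\alpha-\beta|\le|t-s|$ holds automatically and the level-$N$ combinatorics are then handled by the already-proved $K$-case. I would drop the first half of your write-up entirely and present the inward-pushing argument in $\epsilon$--$\delta$ form as the paper does, rather than appealing to a named extension principle.
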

\begin{proof}
Given $k \in \N$, fix $\epsilon > 0$ and let $N$ and $\delta_N$ be as in the proof of Proposition~\ref{prop:continuityCeilingAtK}.

Suppose $t < s$ and $|t - s| < \delta_N$. If $t,s \in K$, we already know that $\|F_t - F_s\|_k < \epsilon$. Otherwise, there exist $\alpha, \beta \in K$ such that $t \leq \alpha \leq \beta \leq s$ and $F_t = F_\alpha, F_s = F_\beta$. Since $|\alpha - \beta| \leq |t - s| < \delta_N$, we're done. 
\end{proof}

\subsection{The function $\psi$ and the associated line field}

\begin{lemma}
\label{lemma:ceilingHeight}
Given $(x,y) \in L$, there exists $t \in [0,1]$ (not necessarily unique) such that $y = F_t(x)$. Moreover, if $y = F_{t_1}(x) = F_{t_2}(x)$, then $F_{t_1}'(x) = F_{t_2}'(x)$.  
\end{lemma}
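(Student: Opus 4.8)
The plan is to extract the desired $t$ from the family of lunes $\{L_\omega\}$, and then to use monotonicity (Lemma~\ref{lemma:ceilingOrder}) to pin down the derivative. Fix $(x,y) \in L = L(f,g)$. For the \emph{existence} of $t$, I would argue as follows. Set $t_- = \inf\{t \in [0,1] \mid F_t(x) \ge y\}$; this set is nonempty because $F_1 = g$ and $g(x) \ge y$, and I claim $t_-$ works. Indeed, the map $t \mapsto F_t(x)$ is continuous (it is the composition of the continuous map $t \mapsto F_t$ from Proposition~\ref{prop:continuityCeiling} with the continuous evaluation $F \mapsto F(x)$ on $C^\infty([0,1])$), so the set $\{t \mid F_t(x) \ge y\}$ is closed and hence contains its infimum $t_-$, giving $F_{t_-}(x) \ge y$; on the other hand, for $t < t_-$ we have $F_t(x) < y$, and taking $t \uparrow t_-$ gives $F_{t_-}(x) \le y$ by continuity. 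Hence $F_{t_-}(x) = y$. (At the extreme, if $y = f(x) = F_0(x)$, then $t_- = 0$.)

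For the \emph{uniqueness of the derivative}, suppose $y = F_{t_1}(x) = F_{t_2}(x)$ with, say, $t_1 < t_2$. By Lemma~\ref{lemma:ceilingOrder}, $F_{t_1} \le F_s \le F_{t_2}$ for every $s \in [t_1, t_2]$, so the $C^\infty$ function $F_s - F_{t_1} \ge 0$ attains the value $0$ at the interior point $x$ of $[0,1]$ — wait, $x$ need not be interior; if $x \in \{0,1\}$ then all the $F_t$ agree there with $f$ and $g$ by the lune conditions (i)–(ii), so there is nothing to prove. For $x \in (0,1)$: the function $F_{t_2} - F_{t_1} \ge 0$ is $C^\infty$, vanishes at the interior point $x$, hence $x$ is a local minimum and $(F_{t_2} - F_{t_1})'(x) = 0$, i.e.\ $F_{t_1}'(x) = F_{t_2}'(x)$. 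Likewise $F_{t_1}'(x) = F_s'(x)$ for every intermediate $s$, though only the endpoints are needed.

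The main point requiring care is simply to make sure the appeals to continuity and to Lemma~\ref{lemma:ceilingOrder} are legitimate for \emph{all} $t \in [0,1]$ and not just $t \in K$: this is exactly what Proposition~\ref{prop:continuityCeiling} and the extension of $F_t$ to $[0,1]$ (via Lemma~\ref{lemma:ceilingEndsMeet}) were set up to provide, and Lemma~\ref{lemma:ceilingOrder}'s conclusion $F_t \le F_s$ for $t \le s$ in $K$ extends to all of $[0,1]$ because the extended $F_t$ is locally constant off $K$ and agrees with a boundary value of $K$ there. I expect no real obstacle; the only subtlety is the boundary case $x \in \{0,1\}$, which is handled trivially by condition~(i) in Definition~\ref{def:lune}, and the observation that the infimum defining $t_-$ is actually attained, which follows from continuity. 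This lemma is the bridge needed to \emph{define} $\psi(x,y) := F_t'(x)$ unambiguously, which is presumably the next step in the paper.
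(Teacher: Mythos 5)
Your proof is correct. The derivative-uniqueness part is essentially identical to the paper's: both argue that if $t_1 \le t_2$ and $F_{t_1}(x)=F_{t_2}(x)$ then, by monotonicity, $x$ is a local extremum of the nonnegative $C^\infty$ function $F_{t_2}-F_{t_1}$, so the derivatives match; and both implicitly rely on extending the monotonicity of Lemma~\ref{lemma:ceilingOrder} from $K$ to all of $[0,1]$, which you at least flag and sketch (the paper applies it silently to arbitrary $t_1,t_2 \in [0,1]$).

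The existence part, however, is a genuinely different route. The paper works inside the combinatorial structure: starting from $L_{(1)}$ it chooses, for each $n$, a child lune $L_{\omega_n}$ whose graph sandwiches $(x,y)$, and then takes $t$ to be the unique point of $\bigcap_n J_{\omega_n}$, observing that $F_t(x) = \lim f_{\omega_n}(x) = y$. This is fully explicit and produces $t \in K$; it also introduces the notion of a \emph{defining sequence} of $(x,y)$ in a lune, which the paper reuses later (Lemma~\ref{lemma:definingSequenceLune}). Your argument instead treats $t \mapsto F_t(x)$ as a continuous real-valued function interpolating between $f(x)$ and $g(x)$ (via Proposition~\ref{prop:continuityCeiling} and continuity of evaluation) and extracts $t_- = \inf\{t : F_t(x) \ge y\}$, checking via closedness and one-sided limits that $F_{t_-}(x)=y$. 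This is a softer, intermediate-value-style argument: it needs less of the lune machinery, but it does not automatically exhibit $t$ as a point of $K$, and it does not set up the reusable ``defining sequence'' idea. For the purpose of defining $\psi$ unambiguously — which is all this lemma is used for — either version suffices.
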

\begin{proof}
Take $(x,y) \in L$, i.e., such that $x \in [0,1], f(x) \leq y \leq g(x)$. Since from the construction in \S~\ref{subsec:luneFamily} we know that for each $n$,
$$\bigcup_{1 \leq i \leq m_n} L_{\concat{\omega}{ (i)}} = L_\omega,$$
it follows that there exists a sequence $\{\omega_n\}_{n \geq 1}$ (not necessarily unique) such that $\omega_1 = (1), \omega_{n} = \concat{\omega_{n-1}}{(i)}$ for some $i \in \{1, 2, \ldots, m_n\}$ with $f_{\omega_n}(x) \leq y \leq g_{\omega_n}(y)$. If $t$ is the only element in $\bigcap_{n \geq 1} J_{\omega_n} \subseteq K$, then 
$$F_t(x) = \lim_{n \to \infty} f_{\omega_n}(x) = \lim_{n \to \infty} g_{\omega_n}(x) = y,$$
proving the first part.

Now suppose $t_1, t_2 \in [0,1]$ are such that $y = F_{t_1}(x) = F_{t_2}(x),$ and assume $t_1 \leq t_2$. By Lemma~\ref{lemma:ceilingOrder}, $F_{t_1} \leq F_{t_2}$, so that $x$ is a local maximum of the function $F_{t_1} - F_{t_2}$. Therefore, $F_{t_1}'(x) - F_{t_2}'(x) = 0$.
\end{proof}

For each $(x,y) \in L$, take $t \in [0,1]$ such that $y = F_t(x)$, and set $\psi(x,y) = F_t'(x)$. By Lemma~\ref{lemma:ceilingHeight}, this is well defined (not depending on the choice of $t$).

\begin{prop}
\label{prop:continuityOfPsi}
The function $\psi: L \to \R$ is continuous. 
\end{prop}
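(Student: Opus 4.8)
The plan is to prove continuity of $\psi$ at an arbitrary point $(x_0,y_0)\in L$ by combining the continuity of $t\mapsto F_t$ (Proposition~\ref{prop:continuityCeiling}) with the fact established in Lemma~\ref{lemma:ceilingHeight} that the value $F_t'(x)$ does not depend on which $t$ realizes the height $y$ at $x$. The subtlety, and the main obstacle, is that the parameter $t$ in the definition $\psi(x,y)=F_t'(x)$ is not a continuous function of $(x,y)$ — for instance on a flat part of some $F_t$ many values of $t$ give the same height — so one cannot simply write $\psi$ as a composition of continuous maps. The way around this is a compactness/subsequence argument.

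First I would take a sequence $(x_n,y_n)\to(x_0,y_0)$ in $L$ and, for each $n$, choose $t_n\in[0,1]$ with $y_n=F_{t_n}(x_n)$, so that $\psi(x_n,y_n)=F_{t_n}'(x_n)$. Since $[0,1]$ is compact, after passing to a subsequence I may assume $t_n\to t_*$ for some $t_*\in[0,1]$. By Proposition~\ref{prop:continuityCeiling}, $F_{t_n}\to F_{t_*}$ in the $C^\infty$ topology, in particular in the $C^1$ norm; combined with $x_n\to x_0$, this gives both $F_{t_n}(x_n)\to F_{t_*}(x_0)$ and $F_{t_n}'(x_n)\to F_{t_*}'(x_0)$. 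The first limit identifies $y_0=F_{t_*}(x_0)$, so $t_*$ is a legitimate choice of parameter for the point $(x_0,y_0)$, and hence by the well-definedness in Lemma~\ref{lemma:ceilingHeight}, $\psi(x_0,y_0)=F_{t_*}'(x_0)$. The second limit then says $\psi(x_n,y_n)\to\psi(x_0,y_0)$ along the subsequence.

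Finally I would upgrade the subsequential statement to genuine convergence of the whole sequence by the standard argument: every subsequence of $\big(\psi(x_n,y_n)\big)$ has a further subsequence converging to $\psi(x_0,y_0)$ — by exactly the reasoning above applied to that subsequence — and a sequence all of whose subsequences have a sub-subsequence converging to a common limit $L$ must itself converge to $L$. This proves sequential continuity of $\psi$ at $(x_0,y_0)$, and since $L\subseteq\R^2$ is metrizable, sequential continuity is continuity. The only point requiring care is that the two uses of Lemma~\ref{lemma:ceilingHeight} — once to know $\psi$ is well defined at $(x_n,y_n)$, once to evaluate it at the limit via the parameter $t_*$ — are applied correctly, and that the $C^1$ convergence $F_{t_n}\to F_{t_*}$ is uniform so that it survives the moving evaluation points $x_n\to x_0$; both are immediate from what has already been established.
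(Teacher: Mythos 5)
Your proof is correct and follows essentially the same route as the paper: both use compactness of $[0,1]$ to extract a convergent subsequence of parameters, then invoke Proposition~\ref{prop:continuityCeiling} for $C^1$ convergence of $F_{t_n}$ and Lemma~\ref{lemma:ceilingHeight} to identify $F_{t_*}'(x_0)$ with $\psi(x_0,y_0)$. The only cosmetic difference is that the paper packages the argument as a proof by contradiction while you close it with the standard ``every subsequence has a convergent sub-subsequence with a common limit'' reduction; the mathematical content is identical.
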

\begin{proof}
Let $(x,y) \in L$ and let $(x_n,y_n) \in L$ be a sequence such that $(x_n, y_n) \to (x,y).$ By Lemma~\ref{lemma:ceilingHeight} and the previous paragraph, $y_n = F_{t_n}(x_n)$ for some $t_n$, and $\psi(x_n,y_n) = F_{t_n}'(x_n)$.

Suppose, by contradiction, that $\psi(x_n,y_n) \not\to \psi(x,y)$. In other words, $F_{t_n}'(x_n)\not\to F_t'(x)$, where $t$ is such that $y = F_t(x)$. By passing to a subsequence if necessary, we may assume that $|F_{t_n}'(x_n) - F_t'(x)| \geq \epsilon$ for some $\epsilon > 0$. 

Let $\{t_{n_k}\}$ be a convergent subsequence of $\{t_n\}$, such that $t_{n_k} \to t^\ast$. By Proposition~\ref{prop:continuityCeiling} and the continuity of $F_{t^\ast}$, the distance
\begin{align*}
|F_{t_{n_k}}(x_{n_k}) - F_{t^\ast}(x)| &\leq |F_{t_{n_k}}(x_{n_k}) - F_{t^\ast}(x_{n_k})| + |F_{t^\ast}(x_{n_k}) - F_{t^\ast}(x)|\\
&\leq \|F_{t_{n_k}} - F_{t^\ast}\|_0 + |F_{t^\ast}(x_{n_k}) - F_{t^\ast}(x)| 
\end{align*}
can be made arbitrarily small as $k \to \infty$, so that $F_{t_{n_k}}(x_{n_k}) \to F_{t^\ast}(x)$ and $F_{t^\ast}(x) = F_t(x)$. By Lemma~\ref{lemma:ceilingHeight}, $F_{t^\ast}'(x) = F_t'(x)$. If we pick $k$ sufficiently large such that  
$\|F_{t_{n_k}} - F_{t^\ast}\|_1 < \frac{\epsilon}{2}$ and  $|F_{t^\ast}'(x_{n_k}) - F_{t^\ast}'(x)| < \frac{\epsilon}{2}$, we obtain
$$|F_{t_{n_k}}'(x_{n_k}) - F_{t}'(x)| < \epsilon,$$
which is a contradiction.
\end{proof} 

We then set $\Lambda(x,y)$ as the line whose direction vector is $(1,\psi(x,y))$, for each $(x,y) \in L$. By Proposition~\ref{prop:continuityOfPsi}, this is a continuous line field.

\subsection{A sequence of curves} 
 We now proceed to the construction of a sequence $\gamma_n$ of curves that converges uniformly to a Peano curve $\gamma$, such that each $\gamma_n$ is tangent to the line field $\Lambda(x,y)$.  

For an interval $I=[\alpha,\beta]$ and for $a,b \in \R$, let $\psi_{I,a,b}:I \to \R$ be a $C^\infty$ strictly monotone function such that
\begin{enumerate}[label=\upshape(\roman*)]
 \item ${\displaystyle \lim_{t \searrow \alpha} \psi_{I,a,b}(t) = a}$, 
 ${\displaystyle \lim_{t \nearrow \beta}\psi_{I,a,b}(t) = b}$. 
 \item ${\displaystyle \lim_{t \searrow \alpha} \psi_{I,a,b}^{(k)}(t) = \lim_{t \nearrow \beta} \psi_{I,a,b}^{(k)}(t) = 0}$ for every $k \geq 1$.
 \end{enumerate}
Additionally, for any $h: [0,1] \to \R$ let $\Gamma_h$ be the graph of $h$, parametrized in the obvious way, i.e., $\Gamma_h(t) = (t,h(t))$ for $t \in [0,1]$.

For each $\omega \in \Omega$, write $S_{L_\omega} = (a_{\omega}, b_{\omega}),$ and let 
$$
\gamma_1(t) = \Gamma_{f_{(1)}}\left(\psi_{J_{(1)},a_{(1)},b_{(1)}}(t)\right)
$$ 
for $t \in [0,1]$. Recursively, set for $n \geq 2$:
$$
\gamma_n(t) = \begin{cases} \gamma_{n-1}(t), &\text{if } t \in \bigcup_{\omega \in \Omega^\ast, |\omega| < n}G_\omega, \\
 \Gamma_{f_\omega}\left(\psi_{J_\omega, a_\omega, b_\omega}(t)\right), &\text{if } t \in J_\omega \text{ for some } \omega \in \Omega, |\omega| = n,\\
\Gamma_{g_\omega}\left(\psi_{G_\omega, b_\omega, a_{\omega^+}}(t)\right), &\text{if } t \in G_\omega \text{ for some } \omega \in \Omega^\ast, |\omega| = n. 
\end{cases}
$$

\begin{figure}[ht]%
\includegraphics[width=0.6\columnwidth]{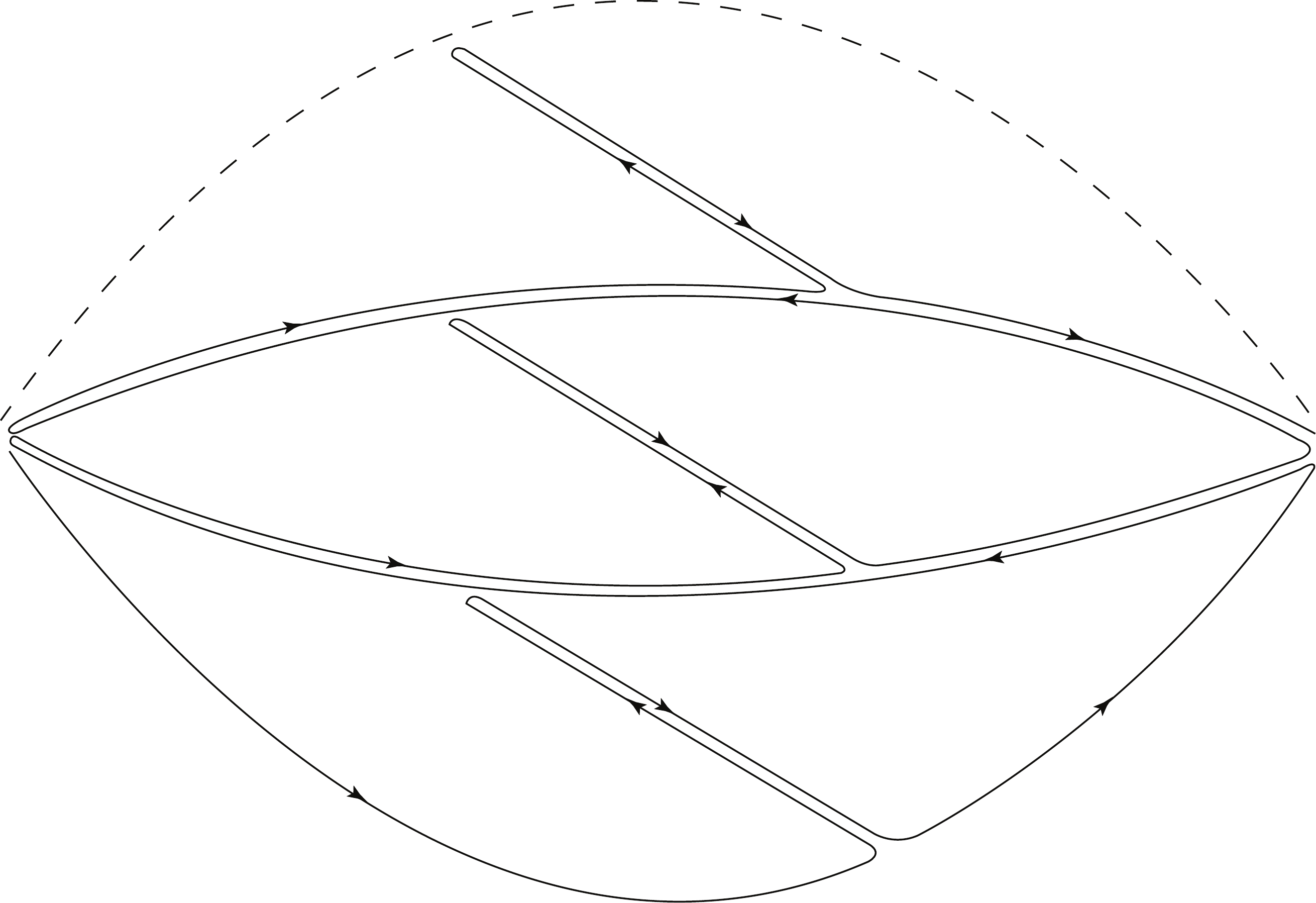}%
\caption{Schematic drawing of the curve $\gamma_2$, assuming $m_2 = 6.$}%
\label{fig:gamma2}%
\end{figure}

\begin{lemma}
\label{lemma:gammaNJ}
If $|\omega| = n$, then 
\begin{enumerate}[label=\upshape(\roman*), parsep=0.5pc]
	\item \label{item:gammaNJ} 
	$\gamma_{n+1}(J_{\concat{\omega}{(i)}}) \subseteq \esslune{L_{{\concat{\omega}{(i)}}}} \text{ for any } 1 \leq i \leq m_{n+1}$;
	\item \label{item:gammaNGodd} 
	$\gamma_{n+1}(G_{\concat{\omega}{(i)}}) \subseteq \esslune{L_{{\concat{\omega}{(i)}}}} \text{ for odd } i, 1 \leq i < m_{n+1}$;
	\item \label{item:gammaNGeven} 
	$\gamma_{n+1}(G_{\concat{\omega}{(i)}}) \subseteq \esslune{L_{{\concat{\omega}{(i-1)}}}} \cup \esslune{L_{{\concat{\omega}{(i)}}}} \text{ for even } i, 1 \leq i < m_{n+1}$. 

As a consequence, $\gamma_{m}(J_\omega) \subseteq \esslune{L_\omega}$ for all $m \geq n$.
\end{enumerate}
\begin{proof}
If $t \in J_{\concat{\omega}{(i)}}$, then $$\gamma_{n+1}(t) = \Gamma_{f_{\concat{\omega}{(i)}}}(\psi_{J_{\concat{\omega}{(i)}}, a_{\concat{\omega}{(i)}}, b_{\concat{\omega}{(i)}}}(t)).$$ Since $a_{\concat{\omega}{(i)}} \leq \psi_{J_{\concat{\omega}{(i)}}, a_{\concat{\omega}{(i)}}, b_{\concat{\omega}{(i)}}}(t) \leq b_{\concat{\omega}{(i)}},$ Lemma~\ref{lemma:simpleLuneEssential} implies property~\ref{item:gammaNJ}.

If $t \in G_{\concat{\omega}{(i)}}$ with odd $i$, then $L_{\concat{\omega}{(i)}}$ is the first lune in a bipartition of one of the slices of $L_\omega$, hence $b_{\concat{\omega}{(i)}} = (a_\omega + 2b_\omega)/3$ and $a_{\concat{\omega}{(i+1)}} = (2a_\omega + b_\omega)/3$ (see Proposition~\ref{prop:bipartition}). Since 
$$\gamma_{n+1}(t) = \Gamma_{g_{\concat{\omega}{(i)}}}(\psi_{G_{\concat{\omega}{(i)}}, b_{\concat{\omega}{(i)}}, a_{\concat{\omega}{(i+1)}}}(t)),$$ 
property~\ref{item:gammaNGodd} follows.

If $t \in G_{\concat{\omega}{(i)}}$ with even $i$ and $i < m_{n+1}$, $L_{\concat{\omega}{(i)}}$ is the second lune in a bipartition, hence $b_{\concat{\omega}{(i)}} = b_\omega$ and $a_{\concat{\omega}{(i+1)}} = a_\omega = a_{\concat{\omega}{(i-1)}}$. 
Therefore, if the value $\psi_{G_{\concat{\omega}{(i)}}, b_{\concat{\omega}{(i)}}, a_{\concat{\omega}{(i+1)}}}(t)$ is greater than or equal to $a_{\concat{\omega}{(i)}}$ then $\gamma_{n+1}(t) \in L_{\concat{\omega}{(i)}}$; on the other hand, if that value is less than or equal to $a_{\concat{\omega}{(i)}} \leq b_{\concat{\omega}{(i-1)}}$ then $\gamma_{n+1}(t) \in L_{\concat{\omega}{(i-1)}}$ (recall that if $x \leq a_{\concat{\omega}{(i)}}$, $g_{\concat{\omega}{(i)}}(x) = f_{\concat{\omega}{(i)}}(x) = g_{\concat{\omega}{(i-1)}}(x)$), thus property~\ref{item:gammaNGeven}  follows.

To see the consequence, notice that the three properties imply that 
for any $\omega \in \Omega$ with $|\omega| = n$, 
$\gamma_{n+1}(J_{\concat{\omega}{(i)}}),\gamma_{n+1}(G_{\concat{\omega}{(i)}}) \subseteq \esslune{L_\omega}$ for each $i$. 
In particular, for any $\omega' \in \Omega$ with $|\omega'| = k$, 
$\gamma_{n+k+1}(J_{\concat{\concat{\omega}{\omega'}}{(i)}}), \gamma_{n+k+1}(G_{\concat{\concat{\omega}{\omega'}}{(i)}}) \subseteq \esslune{L_{\concat{\omega}{\omega'}}} \subseteq \esslune{L_\omega}$. 
Since for each $k$ we have
$$
J_\omega = \bigcup_{\substack{\omega' \in \Omega\\ |\omega'| = k}} J_{\concat{\omega}{\omega'}} \cup \bigcup_{\substack{\omega' \in \Omega^\ast\\ |\omega'| = k}} G_{\concat{\omega}{\omega'}},
$$
it follows that $\gamma_{n+k}(J_\omega) \subseteq \esslune{L_\omega}$.
\end{proof}
\end{lemma}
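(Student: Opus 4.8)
The plan is to split the argument into two stages: first establish the three inclusions (i)--(iii) for $\gamma_{n+1}$, and then bootstrap them to all levels $m\ge n$. The uniform mechanism behind (i)--(iii) is that, by the defining formula, on each of the intervals $J_{\concat{\omega}{(i)}}$ and $G_{\concat{\omega}{(i)}}$ the curve $\gamma_{n+1}$ has the shape $t\mapsto\bigl(x(t),h(x(t))\bigr)$, where $h$ is one of the boundary functions $f_\sigma,g_\sigma$ of the lune $L_\sigma=L(f_\sigma,g_\sigma)$ whose graph we are on, and $x(t)=\psi_{I,a,b}(t)$ is a strictly monotone reparametrization, hence takes values exactly in the closed interval between its endpoint limits $a$ and $b$. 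So the whole matter reduces to locating the $x$-coordinate $x(t)$ relative to the support endpoints $a_\sigma,b_\sigma$ and then invoking Lemma~\ref{lemma:simpleLuneEssential}, which promotes a point of a simple lune into its essential part as soon as its $x$-coordinate lies in the closed support interval.

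For (i) this is immediate: on $J_{\concat{\omega}{(i)}}$ the $x$-coordinate lies in $[a_{\concat{\omega}{(i)}},b_{\concat{\omega}{(i)}}]$ and the curve lies on the floor $f_{\concat{\omega}{(i)}}$ of $L_{\concat{\omega}{(i)}}$, so the point is in $\esslune{L_{\concat{\omega}{(i)}}}$. For (ii) and (iii) I would first record the support endpoints produced by the subdivision, using Propositions~\ref{prop:slicing} and \ref{prop:bipartition}: writing $S_{L_\omega}=(a,b)$, if $i$ is odd then $\concat{\omega}{(i)}$ and $\concat{\omega}{(i+1)}$ are the two halves of a single bipartition, giving $a_{\concat{\omega}{(i)}}=a$, $b_{\concat{\omega}{(i)}}=\tfrac{a+2b}{3}$, $a_{\concat{\omega}{(i+1)}}=\tfrac{2a+b}{3}$; if $i$ is even with $i<m_{n+1}$ then $\concat{\omega}{(i-1)},\concat{\omega}{(i)}$ are the two halves of one bipartition while $\concat{\omega}{(i+1)}$ is the first half of the next, giving $a_{\concat{\omega}{(i-1)}}=a_{\concat{\omega}{(i+1)}}=a$, $b_{\concat{\omega}{(i-1)}}=\tfrac{a+2b}{3}$, $a_{\concat{\omega}{(i)}}=\tfrac{2a+b}{3}$, $b_{\concat{\omega}{(i)}}=b$. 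For odd $i$, on $G_{\concat{\omega}{(i)}}$ the curve lies on the ceiling $g_{\concat{\omega}{(i)}}$ with $x$-coordinate between $\tfrac{2a+b}{3}$ and $\tfrac{a+2b}{3}$, hence inside $[a_{\concat{\omega}{(i)}},b_{\concat{\omega}{(i)}}]$, and Lemma~\ref{lemma:simpleLuneEssential} gives (ii).

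The even case of (iii) is the one point I expect to require care: there the $x$-coordinate $x$ on $G_{\concat{\omega}{(i)}}$ ranges over all of $[a,b]$, and when $x$ falls to the left of $a_{\concat{\omega}{(i)}}$ the point $(x,g_{\concat{\omega}{(i)}}(x))$ need not lie in $L_{\concat{\omega}{(i)}}$ at all. The remedy is a dichotomy: if $x\ge a_{\concat{\omega}{(i)}}$ then $a_{\concat{\omega}{(i)}}\le x\le b=b_{\concat{\omega}{(i)}}$ and the point is in $\esslune{L_{\concat{\omega}{(i)}}}$; if $x\le a_{\concat{\omega}{(i)}}$, then $x$ lies outside $S_{L_{\concat{\omega}{(i)}}}$ on the left, so $g_{\concat{\omega}{(i)}}(x)=f_{\concat{\omega}{(i)}}(x)=g_{\concat{\omega}{(i-1)}}(x)$ because consecutive lunes share a boundary function, and since $a_{\concat{\omega}{(i-1)}}=a\le x\le\tfrac{2a+b}{3}\le\tfrac{a+2b}{3}=b_{\concat{\omega}{(i-1)}}$, Lemma~\ref{lemma:simpleLuneEssential} places the point in $\esslune{L_{\concat{\omega}{(i-1)}}}$, which is exactly what (iii) asserts.

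For the consequence, Propositions~\ref{prop:slicing} and \ref{prop:bipartition} also give that the essential part of any piece of a subdivision is contained in the essential part of the lune being subdivided, so by iteration $\esslune{L_{\concat{\omega}{\omega'}}}\subseteq\esslune{L_\omega}$ for every word $\omega'$; combined with (i)--(iii) and the decomposition $J_\omega=\bigcup_i J_{\concat{\omega}{(i)}}\cup\bigcup_i G_{\concat{\omega}{(i)}}$ this already yields $\gamma_{n+1}(J_\omega)\subseteq\esslune{L_\omega}$. To reach all $m\ge n$ I would induct on $m-n$ (uniformly over all words of each length): a point of $J_\omega$ lies either in some $J_{\concat{\omega}{(i)}}$, where the inductive hypothesis applied to the length-$(n+1)$ word $\concat{\omega}{(i)}$ gives $\gamma_m(J_{\concat{\omega}{(i)}})\subseteq\esslune{L_{\concat{\omega}{(i)}}}\subseteq\esslune{L_\omega}$, or in some $G_{\concat{\omega}{(i)}}$, where for $m>n+1$ the defining recursion freezes $\gamma_m=\gamma_{n+1}$ on that gap and (ii)--(iii) apply directly. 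This is the bookkeeping already indicated at the end of the statement's proof, simply organized as an explicit induction; apart from the even case of (iii), every step is a direct substitution into the definitions.
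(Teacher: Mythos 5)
Your proof is correct and follows essentially the same route as the paper: parts (i)--(iii) are argued by exactly the same case analysis (locating the $x$-coordinate $\psi_{I,a,b}(t)$ relative to the support endpoints, then invoking Lemma~\ref{lemma:simpleLuneEssential}; in (iii), the dichotomy on whether that coordinate lies above or below $a_{\concat{\omega}{(i)}}$ together with the identity $g_{\concat{\omega}{(i)}}=f_{\concat{\omega}{(i)}}=g_{\concat{\omega}{(i-1)}}$ to the left of $a_{\concat{\omega}{(i)}}$). For the consequence, your explicit induction on $m-n$ is a cleaner packaging of the paper's decomposition argument, and it handles one bookkeeping point more carefully than the paper's displayed decomposition of $J_\omega$, which as written lists only level-$k$ gaps $G_{\concat{\omega}{\omega'}}$ and omits the gaps created at intermediate levels $1\le|\omega'|<k$ (on which $\gamma_m$ is frozen at the value of $\gamma_{n+|\omega'|}$, as you correctly observe).
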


\subsection{The Peano curve} 

We now wish to show that the curves $\gamma_n$ defined above converge uniformly to some curve $\gamma$.

\begin{lemma}
There exists a sequence $D_n \to 0$ such that for each $\omega \in \Omega$, the diameter $D_\omega$ of $\esslune{L_\omega}$ satisfies $D_\omega^2 \leq D_{|\omega|}$.
\end{lemma}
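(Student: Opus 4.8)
The plan is to bound the diameter $D_\omega$ of $\esslune{L_\omega}$ in terms of two independent contributions: the horizontal extent of the lune and its vertical extent, and then show each of these goes to zero uniformly as $|\omega| \to \infty$. Since $\esslune{L_\omega}$ is contained in the rectangle $[a_\omega, b_\omega] \times [\min f_\omega, \max g_\omega]$ (more precisely, by Lemma~\ref{lemma:simpleLuneEssential}, in the strip $a_\omega \le x \le b_\omega$ between the graphs of $f_\omega$ and $g_\omega$), we have the crude bound
$$
D_\omega^2 \le (b_\omega - a_\omega)^2 + \big(\text{vertical extent of } \esslune{L_\omega}\big)^2 .
$$
So it suffices to control these two terms separately and set $D_n$ to be (a bound on) their sum over all $\omega$ with $|\omega| = n$.

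For the \emph{vertical extent}: the key observation is that for any word $\omega$ with $|\omega| = n$, the essential part $\esslune{L_\omega}$ is sandwiched between $f_\omega$ and $g_\omega$, and by the Remark (property (a)) together with Lemma~\ref{lemma:controlledNorm} the functions $f_\omega$ and $g_\omega$ both lie within $C^0$-distance $O(\epsilon_n + \epsilon_{n+1})$ of $f_{\omega'}$ for the parent word $\omega'$; iterating, or more directly using $\|L_\omega\|_n = \|g_\omega - f_\omega\|_n \le \epsilon_n$ from property (a) of the Remark, we get that the vertical thickness $g_\omega(x) - f_\omega(x)$ is at most $\|L_\omega\|_0 \le \|L_\omega\|_n \le \epsilon_n$ at every $x$. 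Hence the vertical extent of $\esslune{L_\omega}$ is bounded by $\epsilon_n$, which tends to $0$ since $\sum \epsilon_n < \infty$.

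For the \emph{horizontal extent} $b_\omega - a_\omega$: here I would track how the support interval $S_{L_\omega} = (a_\omega, b_\omega)$ shrinks under the subdivision process. Slicing (Proposition~\ref{prop:slicing}\ref{item:slicingSimpleLune}) does not change the support, but each bipartition step (Proposition~\ref{prop:bipartition}\ref{item:bipartitionSupports}) replaces an interval of length $b - a$ by one of length $\tfrac23(b-a)$. In the recursive construction of \S\ref{subsec:luneFamily}, passing from words of length $k-1$ to words of length $k$ involves one $n_k$-slicing followed by one bipartition; so $b_\omega - a_\omega \le (\tfrac23)^{\,|\omega|-1}(b-a) = (\tfrac23)^{\,|\omega|-1}$ (recall $a = 0$, $b = 1$ after the normalization). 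This tends to $0$ geometrically.

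Combining, we may take $D_n = \big((\tfrac23)^{n-1}\big)^2 + \epsilon_n^2$, or any convenient upper bound such as $(\tfrac23)^{n-1} + \epsilon_n$; this sequence tends to $0$, and $D_\omega^2 \le (b_\omega - a_\omega)^2 + \epsilon_n^2 \le D_n$ for every $\omega$ with $|\omega| = n$, as required. The main obstacle — though it is minor — is being careful that the ``vertical extent'' of $\esslune{L_\omega}$ really is controlled by $\|L_\omega\|_0$ and not by the (possibly large) absolute values of $f_\omega$ and $g_\omega$ themselves; this is exactly why the statement is phrased in terms of $D_\omega^2$ and why the essential part (rather than all of $L_\omega$, which could have empty interior elsewhere) is the right object. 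One should also double-check the bookkeeping of which step (slicing vs.\ bipartition) contributes the factor $\tfrac23$, but no subtlety beyond that is expected.
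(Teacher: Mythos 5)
There is a genuine gap in your control of the \emph{vertical extent}. You assert that the vertical extent of $\esslune{L_\omega}$ is bounded by $\|L_\omega\|_0 \le \epsilon_n$, but $\|L_\omega\|_0 = \sup_x |g_\omega(x) - f_\omega(x)|$ only controls the thickness of the lune over a \emph{fixed} $x$. The vertical extent of the set is $\max_{x \in [a_\omega, b_\omega]} g_\omega(x) - \min_{x' \in [a_\omega, b_\omega]} f_\omega(x')$, which evaluates $g_\omega$ and $f_\omega$ at possibly different abscissas. If $f_\omega$ oscillates with large amplitude, a very thin lune (small $\|L_\omega\|_0$) can still have large vertical extent, so your proposed $D_n = (2/3)^{2(n-1)} + \epsilon_n^2$ is not a valid upper bound for $D_\omega^2$. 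Your closing remark that ``phrasing in terms of $D_\omega^2$'' and ``using the essential part'' resolve this concern is a red herring: restricting to the essential part only constrains $x$ to $[a_\omega, b_\omega]$, and squaring doesn't help either — the issue is the cross terms between different $x$-values, not the shape of the distance formula.

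The fix is to split $g_\omega(x_2) - f_\omega(x_1) = (g_\omega(x_2) - f_\omega(x_2)) + (f_\omega(x_2) - f_\omega(x_1))$. The first difference is bounded by $\|L_\omega\|_0 \le \epsilon_n$, as you observed; the second is bounded by $\|f_\omega\|_1 \cdot (b_\omega - a_\omega)$ via the mean value theorem. To make this useful, one needs $\|f_\omega\|_1$ uniformly bounded over all $\omega$ — this is where an additional idea enters: each $f_\omega$ equals $F_t$ for some $t \in [0,1]$ (it is the limit of $f_{\omega \ast (1,\dots,1)}$), and $\sup_t \|F_t\|_1 =: M < \infty$ by compactness of $[0,1]$ and continuity of $t \mapsto F_t$ (Proposition~\ref{prop:continuityCeiling}). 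The correct choice is then $D_n = (2/3)^{2n} + (\epsilon_n + M(2/3)^n)^2$. Your analysis of the horizontal extent (slicing preserves supports, bipartition contracts by $2/3$) is correct; the missing ingredient is entirely in the vertical direction.
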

\begin{proof}
Let $(x_1,y_1), (x_2,y_2) \in \esslune{L_\omega}$, and assume without loss of generality that $y_1 \leq y_2$. By Lemma~\ref{lemma:simpleLuneEssential}, $a_\omega \leq x_1,x_2 \leq b_\omega$ and $f_\omega(x_1) \leq y_1 \leq y_2 \leq g_\omega(x_2)$. Hence,
\begin{align*}
\operatorname{dist}((x_1,y_1),(x_2,y_2))^2 &= (x_2 - x_1)^2 + (y_2- y_1)^2\\
&\leq (b_\omega - a_\omega)^2 + (g_\omega(x_2) - f_\omega(x_1))^2\\
&\leq (b_\omega - a_\omega)^2 + (g_\omega(x_2) - f_\omega(x_2) + f_\omega(x_2) - f_\omega(x_1))^2\\
&\leq (b_\omega - a_\omega)^2 + \left(|g_\omega(x_2) - f_\omega(x_2)| + \left|\int_{x_1}^{x_2}f_\omega'(t) dt\right|\right)^2\\
&\leq (b_\omega - a_\omega)^2 + \left(\|L_\omega\|_0 + \|f_\omega\|_1 (b_\omega - a_\omega)\right)^2.
\end{align*}

Suppose $|\omega| = n$. Clearly $\|L_\omega\|_0 \leq \|L_\omega\|_n < \epsilon_n$, by the construction of the family of lunes. Moreover, also by construction (and by Proposition~\ref{prop:bipartition}), $b_\omega - a_\omega = (2/3)^n(b_{(1)} - a_{(1)}) \leq (2/3)^n$. Finally, let $M = \sup_{t \in [0,1]} \|F_t\|_1$ (which is finite by Proposition~\ref{prop:continuityCeiling}). Since 
$$f_\omega = \lim_{k \to \infty} f_{\omega * \underbrace{\scriptstyle(1,1, \ldots, 1)}_k } = F_t$$
for some $t \in [0,1]$, it follows that $\|f_\omega\|_{1} \leq M$. In other words,
if we take $$D_n = \left(\frac{2}{3}\right)^{2n} + \left(\epsilon_n + M\left(\frac{2}{3}\right)^n\right)^{2} \to 0,$$
we're done.  
\end{proof}

\begin{lemma} \label{lemma:gamma}
The curves $\gamma_n$ form a uniformly Cauchy sequence,
and in particular converge uniformly to a continuous curve $\gamma$. 
\end{lemma}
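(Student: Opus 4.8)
The plan is to bound, uniformly in $t$, how far apart $\gamma_m(t)$ and $\gamma_n(t)$ can be for large $m,n$, by showing that both stay confined to a single small essential lune. Concretely, the goal is the estimate
\[
\operatorname{dist}\big(\gamma_m(t),\gamma_{m'}(t)\big)\le \sqrt{D_n}\qquad\text{for all }t\in[0,1]\text{ and all }m,m'\ge n,
\]
where $(D_n)$ is the sequence furnished by the preceding lemma (so $D_n\to 0$, and every essential lune $\esslune{L_\omega}$ has diameter $D_\omega\le\sqrt{D_{|\omega|}}$). Combined with the routine fact that each $\gamma_n$ is continuous — the three clauses defining $\gamma_n$ agree on the overlaps of their domains because $g_\omega=f_{\omega^+}$ and the reparametrizing functions $\psi_{I,a,b}$ are continuous with matching endpoint values — this estimate shows that $(\gamma_n)$ is uniformly Cauchy, hence converges uniformly to a continuous curve $\gamma$.

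The estimate reduces to the following geometric claim: \emph{for every $t\in[0,1]$ and every $n\ge 1$ there is a word $\omega$ with $|\omega|=n$ such that $\gamma_m(t)\in\esslune{L_\omega}$ for all $m\ge n$.} Granting this, if $m,m'\ge n$ then $\gamma_m(t),\gamma_{m'}(t)\in\esslune{L_\omega}$, so their distance is at most $D_\omega\le\sqrt{D_{|\omega|}}=\sqrt{D_n}$, as desired.

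To prove the claim, fix $t$ and $n$ and split into cases. If $t\in K$, let $\{\omega_k\}_{k\ge 1}$ be its defining sequence; then $t\in J_{\omega_n}$ with $|\omega_n|=n$, and the final assertion of Lemma~\ref{lemma:gammaNJ} gives $\gamma_m(J_{\omega_n})\subseteq\esslune{L_{\omega_n}}$ for all $m\ge n$, so $\omega=\omega_n$ works. If $t\notin K$, then $t$ lies in a unique removed interval $G_\sigma$ with $\sigma\in\Omega^\ast$, say $|\sigma|=\ell$. When $\ell>n$, the interval $G_\sigma$ is contained in $J_{\sigma'}$, where $\sigma'$ is the length-$n$ prefix of $\sigma$ (the removed intervals of level $\ell$ sit inside the intervals of level $n<\ell$), and again Lemma~\ref{lemma:gammaNJ} gives $\gamma_m(t)\in\esslune{L_{\sigma'}}$ for all $m\ge n$. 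When $\ell\le n$, the recursive definition of $\gamma_m$ forces $\gamma_m(t)=\gamma_{m-1}(t)$ for every $m\ge\ell+1$, so $\gamma_m(t)$ equals the single point $\gamma_\ell(t)$ for all $m\ge\ell$, in particular for all $m\ge n$; since iterating the partition properties in Propositions~\ref{prop:slicing} and \ref{prop:bipartition} yields $\esslune{L}=\bigcup_{|\omega|=n}\esslune{L_\omega}$, that point lies in some $\esslune{L_\omega}$ with $|\omega|=n$, which is the word we want.

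The only delicate point is this last case: one must notice that as soon as the parameter enters a removed interval the value $\gamma_m(t)$ freezes, so the tail is automatically Cauchy, whereas before that the values $\gamma_m(t)$ are trapped in the shrinking essential lunes $\esslune{L_{\sigma|_m}}$. Past this observation the argument is just an application of Lemma~\ref{lemma:gammaNJ} together with the diameter bound $D_\omega\le\sqrt{D_{|\omega|}}$ from the preceding lemma.
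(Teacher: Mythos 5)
Your proof is correct and takes essentially the same approach as the paper's: bound $\operatorname{dist}(\gamma_m(t),\gamma_{m'}(t))$ for $m,m'\ge n$ by showing both points lie in a single essential lune $\esslune{L_\omega}$ with $|\omega|=n$ (or are literally the same frozen point), then invoke the diameter bound from the preceding lemma. The paper organizes this as a two-way split --- $t$ in a removed gap of level $\le N$ (value frozen), or $t\in J_\omega$ for some $|\omega|=N$ (trapped in $\esslune{L_\omega}$ by Lemma~\ref{lemma:gammaNJ}); your three-case claim is a slightly finer decomposition of the same idea, with the $\ell>n$ subcase really being absorbed into the ``$t\in J_\omega$'' case once one notes that $G_\sigma\subset J_{\sigma'}$. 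A minor point in your favor: you correctly use the bound $\sqrt{D_n}$ for the diameter, consistent with the estimate $D_\omega^2\le D_{|\omega|}$ in the lemma preceding this one.
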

\begin{proof}
Let $\epsilon > 0$ and take $N$ such that 
$D_N < \epsilon.$  
We wish to show that whenever $m,n \geq N$, we have $|\gamma_m(t) - \gamma_n(t)| < \epsilon$ for each $t \in [0,1]$. 

In fact, if $t \in \bigcup_{\omega \in \Omega^\ast, |\omega| \leq N} G_\omega$ this is clear, because $\gamma_k(t) = \gamma_N(t)$ for all $k \geq N$. On the other hand, if $t \in J_\omega$ for some $\omega, |\omega| = N$, then Lemma~\ref{lemma:gammaNJ} shows that $\gamma_m(t)$, $\gamma_n(t) \in \esslune{L_\omega}$, thus
$|\gamma_m(t) - \gamma_n(t)| \leq D_N < \epsilon.$
\end{proof}

We now wish to relate the footprint $\gamma([0,t])$ with the ceiling function $F_t$.
We need a few results first:
\begin{lemma}
\label{lemma:definingSequenceLune}
Let $\omega \in \Omega$, $(x,y) \in \esslune{L_\omega}$. There exists a sequence $\{\omega_n\}_{n \geq |\omega|}$ such that $\omega_{|\omega|} = \omega$, $\omega_{n+1} = \concat{\omega_n}{(\ell)}$ for some $\ell \in \N$ and $(x,y) \in \esslune{L_{\omega_n}}$ for each $n$. 

Moreover, there exists $t \in J_\omega \cap K$ such that the defining sequence $\{\omega_{t,n}\}_{n \geq 1}$ of $t$ satisfies $\omega_{t,n} = \omega_n$ whenever $n \geq |\omega|$.
\end{lemma}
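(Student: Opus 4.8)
The plan is to mimic the argument of Lemma~\ref{lemma:ceilingHeight}, but starting from the word $\omega$ rather than from $(1)$, and tracking the essential parts $\esslune{L_{\omega_n}}$ rather than the full lunes. First, I would establish the recursive selection: given $(x,y)\in\esslune{L_\omega}$, I claim that for every word $\tau$ with $(x,y)\in\esslune{L_\tau}$ there is a child $\concat{\tau}{(\ell)}$ with $(x,y)\in\esslune{L_{\concat{\tau}{(\ell)}}}$. This follows from property~\ref{item:slicingIsAPartition} of Proposition~\ref{prop:slicing} and property~\ref{item:bipartitionIsAPartition} of Proposition~\ref{prop:bipartition}: the construction in \S~\ref{subsec:luneFamily} builds the children of $L_\tau$ by first slicing and then bipartitioning, and each of these operations satisfies $\esslune{L_\tau}=\bigcup_i\esslune{(\text{children})}$. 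Hence $(x,y)$ lies in the essential part of at least one child; pick one such child and call it $\omega_{n+1}$. Starting from $\omega_{|\omega|}=\omega$ and iterating, I obtain the desired sequence $\{\omega_n\}_{n\geq|\omega|}$.

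For the ``moreover'' part, I would set $t$ to be the unique point of $\bigcap_{n\geq|\omega|}J_{\omega_n}$. This intersection is a single point because the lengths of the nested intervals $J_{\omega_n}$ shrink to zero: by the construction of $K$ in \S, $J_{\concat{\tau}{(\ell)}}$ has length $\frac{1}{2m_{|\tau|+1}-1}$ times that of $J_\tau$, so the diameters go to $0$. This point $t$ lies in $K$, and since $\omega_{|\omega|}=\omega$ and each $\omega_{n+1}$ is a one-letter extension of $\omega_n$, the sequence $\{\omega_n\}_{n\geq|\omega|}$ is (a tail of) the defining sequence of $t$. Finally $t\in J_{\omega_{|\omega|}}=J_\omega$, so $t\in J_\omega\cap K$ as required. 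One should note that $|\omega|$ may be larger than $1$, so the defining sequence $\{\omega_{t,n}\}_{n\geq1}$ has initial terms $\omega_{t,1},\ldots,\omega_{t,|\omega|-1}$ determined by the (unique) ancestors of $\omega$; since $t\in J_\omega$ forces $\omega_{t,|\omega|}=\omega$, uniqueness of the nested word structure gives $\omega_{t,n}=\omega_n$ for all $n\geq|\omega|$.

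The only mildly delicate point is the very first claim — that slicing and bipartition induce partitions of \emph{essential parts}, not just of the lunes themselves — but this is exactly what properties~\ref{item:slicingIsAPartition} and \ref{item:bipartitionIsAPartition} assert, so there is no real obstacle; the rest is bookkeeping with the definitions of $K$ and of the defining sequence. I expect the write-up to be short.
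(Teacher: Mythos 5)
Your proposal is correct and takes essentially the same approach as the paper's proof: the first part rests on the decomposition $\esslune{L_{\omega_n}} = \bigcup_{1 \le i \le m_{n+1}} \esslune{L_{\concat{\omega_n}{(i)}}}$, obtained by combining properties \ref{item:slicingIsAPartition} of Proposition~\ref{prop:slicing} and \ref{item:bipartitionIsAPartition} of Proposition~\ref{prop:bipartition}, and the second part takes $t$ to be the unique point of $\bigcap_{n \ge |\omega|} J_{\omega_n}$. You spell out the nested-interval shrinkage and the uniqueness of the word structure in more detail than the paper, but the underlying argument is the same.
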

\begin{proof}
The first part is immediate from the fact that 
$$
\esslune{L_{\omega_n}} = \bigcup_{1 \leq i \leq m_{n+1}} \esslune{L_{\concat{\omega_n}{(i)}}}.
$$ 
For the second part, we take $t$ to be the single element of $\bigcap_{n \geq |\omega|} J_{\omega_n}$.
\end{proof}
The sequence in Lemma~\ref{lemma:definingSequenceLune} is not necessarily unique, and we call every such sequence a \emph{defining sequence} of $(x,y)$ in $L_\omega$.

\begin{lemma}
\label{lemma:gammaJ}
For each $\omega \in \Omega$, $\gamma(J_\omega) = \gamma(J_\omega \cap K) = \esslune{L_\omega}$.
\end{lemma}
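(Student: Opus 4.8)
The plan is to prove the two inclusions $\gamma(J_\omega)\subseteq\esslune{L_\omega}$ and $\esslune{L_\omega}\subseteq\gamma(J_\omega\cap K)$; since trivially $\gamma(J_\omega\cap K)\subseteq\gamma(J_\omega)$, these two inclusions force all three sets to coincide. For the first inclusion I would argue as follows. Fix $\omega$ with $|\omega|=n$. By the consequence stated in Lemma~\ref{lemma:gammaNJ}, $\gamma_m(J_\omega)\subseteq\esslune{L_\omega}$ for every $m\ge n$. Since $\gamma_m\to\gamma$ uniformly (Lemma~\ref{lemma:gamma}) and $\esslune{L_\omega}=\overline{\interior(L_\omega)}$ is closed, letting $m\to\infty$ gives $\gamma(t)\in\esslune{L_\omega}$ for every $t\in J_\omega$, i.e.\ $\gamma(J_\omega)\subseteq\esslune{L_\omega}$.

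For the reverse inclusion, take an arbitrary point $(x,y)\in\esslune{L_\omega}$. Apply Lemma~\ref{lemma:definingSequenceLune} to obtain a defining sequence $\{\omega_k\}_{k\ge|\omega|}$ of $(x,y)$ in $L_\omega$ (so $\omega_{|\omega|}=\omega$, $\omega_{k+1}=\concat{\omega_k}{(\ell)}$, and $(x,y)\in\esslune{L_{\omega_k}}$ for all $k$), together with a point $t\in J_\omega\cap K$ whose own defining sequence in $K$ satisfies $\omega_{t,k}=\omega_k$ for all $k\ge|\omega|$. In particular $t\in J_{\omega_k}$ for every $k$, and hence by the first inclusion (applied to each $\omega_k$) we get $\gamma(t)\in\gamma(J_{\omega_k})\subseteq\esslune{L_{\omega_k}}$. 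Thus both $(x,y)$ and $\gamma(t)$ lie in $\esslune{L_{\omega_k}}$ for every $k\ge|\omega|$, so $|\gamma(t)-(x,y)|\le D_{\omega_k}$, where $D_{\omega_k}$ is the diameter of $\esslune{L_{\omega_k}}$. By the diameter lemma, $D_{\omega_k}^2\le D_{|\omega_k|}=D_k\to 0$, so $D_{\omega_k}\to 0$ and therefore $\gamma(t)=(x,y)$. Since $t\in J_\omega\cap K$, this shows $(x,y)\in\gamma(J_\omega\cap K)$, completing the proof.

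The main point to be careful about is the last step: one must verify that $\gamma(t)$ belongs to \emph{every} $\esslune{L_{\omega_k}}$, not just to $\esslune{L_\omega}$, which is exactly why we record that $t\in J_{\omega_k}$ for all $k$ (a consequence of the compatibility $\omega_{t,k}=\omega_k$) and then reuse the already-established inclusion $\gamma(J_{\omega_k})\subseteq\esslune{L_{\omega_k}}$ for the longer words $\omega_k$. Everything else is a routine limiting argument combined with the vanishing of the diameters; no further estimates are needed.
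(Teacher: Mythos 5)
Your proof is correct and follows essentially the same route as the paper's: the forward inclusion via Lemma~\ref{lemma:gammaNJ} together with uniform convergence and closedness of $\esslune{L_\omega}$, and the reverse inclusion via a defining sequence from Lemma~\ref{lemma:definingSequenceLune} and the fact that the nested sets $\esslune{L_{\omega_k}}$ shrink to a point. You spell out the vanishing-diameter step (via the diameter lemma) that the paper leaves implicit in its phrase ``nested compact sets,'' which is a welcome clarification but not a different argument.
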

\begin{proof}
Lemma~\ref{lemma:gammaNJ} implies that $\gamma(J_\omega) \subseteq \esslune{L_\omega}$. For the other direction, take for each $(x,y) \in \esslune{L_\omega}$, $\{\omega_n\}_{n \geq |\omega|}$ and $t \in J_\omega \cap K$ as in Lemma~\ref{lemma:definingSequenceLune}. Since $\gamma(t) \in \esslune{L_{\omega_n}}$ for each $n$ (and this is a sequence of nested compact sets), it follows that $\gamma(t) = (x,y)$. Therefore, $\gamma(J_\omega) \subseteq \esslune{L_\omega} \subseteq \gamma(J_\omega \cap K) \subseteq \gamma(J_\omega)$ and we're done.
\end{proof}

\begin{lemma}
\label{lemma:onlyGammaKMatters}
For each $t \in [0,1]$, $\gamma([0,t]) = \gamma(K \cap [0,t])$.
\end{lemma}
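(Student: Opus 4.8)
The plan is to show $\gamma([0,t]) = \gamma(K \cap [0,t])$ by proving that whenever $s \in [0,t] \setminus K$, the point $\gamma(s)$ already lies in $\gamma(K \cap [0,t])$. Since $\gamma(K \cap [0,t]) \subseteq \gamma([0,t])$ is obvious, only this reverse inclusion requires work. Fix such an $s$; then $s$ belongs to a unique removed interval $G_\omega = (\alpha, \beta)$ with $\omega \in \Omega^\ast$, and $\alpha, \beta \in K$ by Lemma~\ref{lemma:ceilingEndsMeet}. Set $n = |\omega|$.

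The key observation is that on the gap $G_\omega$, the curve $\gamma$ agrees with $\gamma_{n}$ (for $|\omega| = n$, the recursive definition freezes $\gamma_m(t) = \gamma_n(t)$ on $G_\omega$ for all $m \geq n$, since $G_\omega$ appears among the intervals $\bigcup_{|\omega'| < m} G_{\omega'}$ once $m > n$), and on $G_\omega$ this is the reparametrized graph $\Gamma_{g_\omega}\big(\psi_{G_\omega, b_\omega, a_{\omega^+}}(s)\big)$; in particular $\gamma(s)$ lies on the graph of $g_\omega$ over the $x$-interval between $b_\omega$ and $a_{\omega^+}$. Now I distinguish two cases according to the parity of the last letter of $\omega$, exactly as in the proof of Lemma~\ref{lemma:gammaNJ}\ref{item:gammaNGodd}--\ref{item:gammaNGeven}. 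If the last letter is odd, then $\gamma(s) \in \esslune{L_\omega}$; if it is even, then $\gamma(s) \in \esslune{L_{\omega^-}} \cup \esslune{L_\omega}$ where $\omega^-$ denotes $\omega$ with its last letter decreased by one. In either case $\gamma(s) \in \esslune{L_{\omega'}}$ for some word $\omega'$ with $J_{\omega'} \subseteq [0,\beta] \subseteq [0, t]$ (here we use that $\beta \le s \cdot$? — more carefully: $G_\omega \subseteq [0,t]$ forces $\beta \le t$, and $J_{\omega}, J_{\omega^-}$ all lie to the left of $\beta$, hence inside $[0,t]$). By Lemma~\ref{lemma:gammaJ}, $\esslune{L_{\omega'}} = \gamma(J_{\omega'} \cap K) \subseteq \gamma(K \cap [0,t])$, so $\gamma(s) \in \gamma(K \cap [0,t])$, as desired.

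The main thing to be careful about is the bookkeeping with the intervals $J_{\omega'}$: one must check that in both parity cases the relevant word $\omega'$ has its interval $J_{\omega'}$ contained in $[0,t]$, which follows from the ordering of the intervals $J_\bullet$ and $G_\bullet$ inside $J_{\omega^{\mathrm{head}}}$ (all of $J_\omega$, $J_{\omega^-}$, and $G_\omega$ precede the right endpoint $\beta$ of $G_\omega$, and $\beta \leq t$). No genuinely new estimate is needed; this is a structural consequence of Lemmas~\ref{lemma:gammaNJ} and~\ref{lemma:gammaJ} together with the fact that $\gamma$ is eventually constant in $n$ on each gap.
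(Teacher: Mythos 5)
Your proposal follows the same route as the paper's proof: identify the gap $G_\omega$ containing the point in question, use Lemma~\ref{lemma:gammaNJ} (odd vs.\ even last letter) to place $\gamma(s)$ in $\esslune{L_\omega}$ or $\esslune{L_{\omega^-}} \cup \esslune{L_\omega}$, and conclude via Lemma~\ref{lemma:gammaJ}. The structure, the parity case split, and the lemmas invoked are all the same.

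One step in your bookkeeping is off, though the conclusion survives. You write that $J_{\omega'} \subseteq [0,\beta] \subseteq [0,t]$ and try to justify this by ``$G_\omega \subseteq [0,t]$ forces $\beta \leq t$.'' But the hypothesis is only that $s \in G_\omega = (\alpha,\beta)$ and $s \leq t$; if $t$ happens to lie in the interior of $G_\omega$ then $t < \beta$ and $G_\omega \not\subseteq [0,t]$, so $\beta \leq t$ can fail. What you actually need (and what holds) is the weaker and more elementary fact: $J_\omega$ has right endpoint $\alpha$, and $J_{\omega^-}$ and $G_{\omega^-}$ precede $J_\omega$, so all the relevant intervals sit in $[0,\alpha]$; since $\alpha < s \leq t$, they lie in $[0,t]$. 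The paper sidesteps this by directly producing $s' \in K$ with $s' < t$ and $\gamma(s') = \gamma(t)$, using that $J_{\concat{\omega}{(i-1)}}$ and $J_{\concat{\omega}{(i)}}$ both lie strictly to the left of the gap $G_{\concat{\omega}{(i)}}$ containing $t$. Swap $\beta$ for $\alpha$ in your justification and the argument is complete.
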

\begin{proof}
We need to show that for each $t \notin K$, there exists $s < t, s \in K$ with $\gamma(s) = \gamma(t)$. Take $t \notin K$, so that $t \in G_{\concat{\omega}{(i)}}$ for some $\omega \in \Omega^\ast, |\omega| = n$, $i < m_{n+1} - 1$. By Lemma~\ref{lemma:gammaNJ}, $\gamma(t) = \gamma_{n+1}(t) \in \esslune{L_{{\concat{\omega}{(i-1)}}}} \cup \esslune{L_{{\concat{\omega}{(i)}}}}$ (or simply $\esslune{L_{{\concat{\omega}{(i)}}}}$ if $i = 1$).
By Lemma~\ref{lemma:gammaJ}, 
$ \gamma(J_{\concat{\omega}{(i-1)}} \cap K) \cup \gamma(J_{\concat{\omega}{(i)}} \cap K) = \esslune{L_{{\concat{\omega}{(i-1)}}}} \cup \esslune{L_{{\concat{\omega}{(i)}}}},$
and this yields the result.  
\end{proof}

\begin{lemma}
\label{lemma:gammaTinTheCeiling}
For $t \in [0,1]$, if $\gamma(t) = (x(t), y(t))$, then $y(t) = F_t(x(t))$. 
\end{lemma}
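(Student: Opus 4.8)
The plan is to distinguish whether or not $t$ lies in the Cantor set $K$. Suppose first that $t \in K$, and let $\{\omega_n\}_{n \ge 1}$ be its defining sequence, so that $t \in J_{\omega_n}$ for every $n$. By Lemma~\ref{lemma:gammaJ} we have $\gamma(t) \in \gamma(J_{\omega_n}) = \esslune{L_{\omega_n}}$, and by Lemma~\ref{lemma:simpleLuneEssential} membership in $\esslune{L_{\omega_n}} = \esslune{L}(f_{\omega_n}, g_{\omega_n})$ means precisely that $f_{\omega_n}(x(t)) \le y(t) \le g_{\omega_n}(x(t))$. Since by construction $f_{\omega_n} \to F_t$ and $g_{\omega_n} \to F_t$ in $C^\infty([0,1])$, hence uniformly, I would evaluate at the fixed point $x(t)$ and pass to the limit $n \to \infty$ in this chain of inequalities to obtain $F_t(x(t)) \le y(t) \le F_t(x(t))$, that is, $y(t) = F_t(x(t))$.

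Now suppose $t \notin K$, so that $t$ lies in a unique removed interval $G_\eta$ with $\eta \in \Omega^\ast$; set $N = |\eta|$. From the recursive definition of the curves $\gamma_n$, for every $n > N$ one has $\gamma_n(t) = \gamma_{n-1}(t)$, since $t$ belongs to a $G$-interval whose index word has length $N < n$; hence $\gamma(t) = \gamma_N(t) = \Gamma_{g_\eta}\big(\psi_{G_\eta, b_\eta, a_{\eta^+}}(t)\big)$, and reading off coordinates gives $y(t) = g_\eta(x(t))$. It then remains to check that the extended ceiling function satisfies $F_t = g_\eta$: writing $G_\eta = (\alpha, \beta)$, the extension sets $F_t = F_\alpha$, and the computation in the proof of Lemma~\ref{lemma:ceilingEndsMeet} identifies $F_\alpha$ with $\lim_{n \to \infty} g_{\concat{\eta}{(m_{N+1}, m_{N+2}, \ldots, m_n)}}$, each term of which equals $g_\eta$ by repeated use of the identity $g_{\concat{\omega}{(m_{|\omega|+1})}} = g_\omega$ recorded in \S~\ref{subsec:luneFamily}. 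Therefore $F_t = g_\eta$, and $y(t) = g_\eta(x(t)) = F_t(x(t))$.

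I do not expect a genuine obstacle here: the statement is essentially a consistency check assembling facts already in place. The only point requiring a little care is the case $t \notin K$, where one must observe that the curve has already stabilized on $G_\eta$ (so that $\gamma(t) = \gamma_N(t)$ lies on the graph of $g_\eta$) and that the extended ceiling function there coincides with this very function $g_\eta$ — both facts following from the stabilization identity $g_{\concat{\omega}{(m_{|\omega|+1})}} = g_\omega$ together with Lemma~\ref{lemma:ceilingEndsMeet}.
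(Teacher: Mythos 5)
Your proof is correct and follows essentially the same route as the paper's: split on whether $t\in K$ and, in the Cantor case, use Lemma~\ref{lemma:gammaJ} together with Lemma~\ref{lemma:simpleLuneEssential} to sandwich $y(t)$ between $f_{\omega_n}(x(t))$ and $g_{\omega_n}(x(t))$ and pass to the limit, while in the gap case observe that $\gamma$ has stabilized on the graph of $g_\eta$ and that the extended ceiling function equals $g_\eta$ there. The only difference is that you spell out in more detail why $F_t = g_\eta$ on the gap $G_\eta$ (tracing through the argument of Lemma~\ref{lemma:ceilingEndsMeet}), whereas the paper records this as the chain of equalities $F_t = F_{\beta_\omega} = g_\omega = f_{\omega^+} = F_{\alpha_{\omega^+}}$.
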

\begin{proof}
If $t \notin K$, then $t \in G_\omega = (\beta_{\omega}, \alpha_{\omega^+})$ for some $\omega$. By the definition of $\gamma$, $y(t) = g_\omega(x(t))$, because $\gamma(t)$ is in the graph of $g_\omega$. Since $F_t = F_{\beta_\omega} = g_\omega = f_{\omega^+} = F_{\alpha_{\omega^+}}$, the result follows.

If $t \in K$ and $\omega_n$ is the defining sequence of $t$, we know by Lemma~\ref{lemma:gammaJ} that $(x(t),y(t)) \in \esslune{L_{\omega_n}}$. By Lemma \ref{lemma:simpleLuneEssential}, $f_{\omega_n}(x(t)) \leq y(t) \leq g_{\omega_n}(x(t))$. Since both bounding sequences converge to $F_t(x(t))$, we're done. 
\end{proof}

\begin{prop}
\label{prop:boundaryGamma}
For each $t \in (0,1]$, $\gamma([0,t]) = \esslune{L}(f,F_t)$.
\end{prop}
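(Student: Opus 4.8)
The plan is to prove the two inclusions separately, after two soft preliminaries, and to get past the one delicate geometric point — the location of the single boundary point $\gamma(t)$ — by a continuity-and-closedness argument rather than a direct check.

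\emph{Preliminaries and reduction.} Since $\gamma$ is continuous and $[0,t]$ is compact, $\gamma([0,t])$ is closed. Since passing from a lune to one that subdivides it only raises the floor, the functions $f_{\omega_n}$ increase with $n$, whence $F_t=\lim_n f_{\omega_n}=\sup_n f_{\omega_n}\ge f_{(1)}=f$; together with the fact that every $f_{\omega_n}$ matches $f$ to all orders at the two endpoints, this makes $L(f,F_t)$ a bona fide lune, so $\esslune{L}(f,F_t)=\overline{\interior L(f,F_t)}$ makes sense. By Lemma~\ref{lemma:onlyGammaKMatters} it is enough to compute $\gamma(K\cap[0,t])$. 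Finally I would reduce to $t\in K$: if $t\notin K$, say $t\in G_\omega$, then $F_t=F_{\sup J_\omega}$ (extension of $s\mapsto F_s$ via Lemma~\ref{lemma:ceilingEndsMeet}) and $\gamma([0,t])=\gamma([0,\sup J_\omega])$, because $\gamma(\overline{G_\omega})$ is covered by $\esslune{L_\omega}$ and, when $\omega$ ends with an even letter, also by the essential part of the lune of the preceding word (Lemma~\ref{lemma:gammaNJ}), and both of these are $\gamma$-images of $J$-intervals contained in $[0,\sup J_\omega]$. So from now on $t\in K$, with defining sequence $\{\omega_n\}$ and $\omega_{n+1}=\concat{\omega_n}{(i_{n+1})}$.

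\emph{The inclusion $\esslune{L}(f,F_t)\subseteq\gamma([0,t])$.} As $\gamma([0,t])$ is closed it suffices to show $\interior L(f,F_t)\subseteq\gamma([0,t])$. Fix $(x,y)$ with $0<x<1$ and $f(x)<y<F_t(x)$. Since $(f_{\omega_n}(x))_n$ is nondecreasing, equals $f(x)<y$ at $n=1$, and has supremum $F_t(x)>y$, there is a largest index $M$ with $f_{\omega_M}(x)\le y$, and then $f_{\omega_{M+1}}(x)>y$, forcing $i_{M+1}\ge2$. In the slicing-and-bipartition that produces $L_{\concat{\omega_M}{(1)}},\dots,L_{\concat{\omega_M}{(m_{M+1})}}$ we have $f_{\concat{\omega_M}{(1)}}=f_{\omega_M}$, $f_{\concat{\omega_M}{(\ell+1)}}=g_{\concat{\omega_M}{(\ell)}}\ge f_{\concat{\omega_M}{(\ell)}}$, and $f_{\concat{\omega_M}{(i_{M+1})}}=f_{\omega_{M+1}}$; hence $\ell\mapsto f_{\concat{\omega_M}{(\ell)}}(x)$ increases from $f_{\omega_M}(x)\le y$ and has passed $y$ by $\ell=i_{M+1}$. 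Taking the largest $\ell^\ast$ with $f_{\concat{\omega_M}{(\ell^\ast)}}(x)\le y$ gives $\ell^\ast\le i_{M+1}-1$ and $f_\sigma(x)\le y<g_\sigma(x)$ for $\sigma:=\concat{\omega_M}{(\ell^\ast)}$, so $x\in S_{L_\sigma}$ and, by Lemma~\ref{lemma:simpleLuneEssential}, $(x,y)\in\esslune{L_\sigma}=\gamma(J_\sigma)$ (Lemma~\ref{lemma:gammaJ}). Since $\ell^\ast<i_{M+1}$, the interval $J_\sigma$ lies to the left of $J_{\omega_{M+1}}\ni t$, so $J_\sigma\subseteq[0,t]$ and $(x,y)\in\gamma([0,t])$.

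\emph{The inclusion $\gamma([0,t])\subseteq\esslune{L}(f,F_t)$.} By Lemma~\ref{lemma:onlyGammaKMatters}, $\gamma([0,t])=\gamma(K\cap[0,t])=\gamma(K\cap[0,t))\cup\{\gamma(t)\}$. Take $s\in K\cap[0,t)$ and let $N$ be the first index at which the defining sequences of $s$ and $t$ differ; since the sub-intervals of $J_{\omega_{N-1}}$ are ordered by their last letter and $s<t$, we get $\omega_{s,N}=\concat{\omega_{N-1}}{(j)}$ with $j<i_N$. By Lemma~\ref{lemma:gammaJ}, $\gamma(s)\in\esslune{L_{\omega_{s,N}}}$; and $L_{\omega_{s,N}}\subseteq L(f,F_t)$ because $f_{\omega_{s,N}}\ge f$ and $g_{\omega_{s,N}}=f_{\concat{\omega_{N-1}}{(j+1)}}\le f_{\concat{\omega_{N-1}}{(i_N)}}=f_{\omega_N}\le F_t$. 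Taking closures of interiors, $\esslune{L_{\omega_{s,N}}}\subseteq\esslune{L}(f,F_t)$, so $\gamma(K\cap[0,t))\subseteq\esslune{L}(f,F_t)$. For the one remaining point: since $t>0$ we may pick $s_k\uparrow t$ with $s_k<t$, and by continuity $\gamma(t)=\lim_k\gamma(s_k)\in\overline{\gamma([0,t))}=\overline{\gamma(K\cap[0,t))}\subseteq\esslune{L}(f,F_t)$, the last step because $\esslune{L}(f,F_t)$ is closed (and the middle equality again by Lemma~\ref{lemma:onlyGammaKMatters}).

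\emph{Main obstacle.} The one genuinely delicate issue is $\gamma(t)$ itself: it lies on the top boundary $y=F_t(x)$ of the region $L(f,F_t)$, and wherever $F_t$ touches $f$ this point could a priori be a degenerate ``sliver'' point lying in $L(f,F_t)$ but not in $\overline{\interior L(f,F_t)}=\esslune{L}(f,F_t)$; checking directly that this cannot happen looks fiddly, which is why the argument above first controls all of $\gamma([0,t))$ and then gets $\gamma(t)$ for free from continuity and closedness. Everything else is bookkeeping — reading off from the defining sequence of $t$ which lunes $L_\sigma$ have $J_\sigma\subseteq[0,t]$ and matching the union of their essential parts with $L(f,F_t)$ — with Lemma~\ref{lemma:simpleLuneEssential} serving as the bridge between ``$(x,y)\in L_\sigma$'' and ``$(x,y)\in\esslune{L_\sigma}=\gamma(J_\sigma)$''.
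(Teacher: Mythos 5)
Your proof is correct, and it takes a genuinely different route from the paper's. For the inclusion $\esslune{L}(f,F_t)\subseteq\gamma([0,t])$, the paper invokes Lemma~\ref{lemma:definingSequenceLune} to produce a defining sequence of $(x,y)$ in $L$ and hence an $s\in K$ with $\gamma(s)=(x,y)$, then concludes $s<t$ from $F_s(x)=y<F_t(x)$ via Lemma~\ref{lemma:ceilingOrder}; you instead descend along the defining sequence of $t$ and locate an explicit word $\sigma$ with $(x,y)\in\esslune{L_\sigma}$ and $J_\sigma\subseteq[0,t]$, which is more hands-on but bypasses Lemma~\ref{lemma:definingSequenceLune} entirely. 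For the reverse inclusion, the paper treats every $s\in K\cap(0,t]$ uniformly by showing $\gamma(s)\in\esslune{L}(f,F_s)$, which forces a case split between $s$ being a left endpoint $\alpha_{\omega^+}$ (an explicit one-sided-limit computation of $\gamma$ along $G_\omega$) and $s$ being a limit of left endpoints $\alpha_{\omega_n}$. You instead observe that for $s\in K\cap[0,t)$ the first word $\omega_{s,N}$ of $s$ disagreeing with that of $t$ already satisfies $L_{\omega_{s,N}}\subseteq L(f,F_t)$, and then you recover the single remaining point $\gamma(t)$ from closedness of $\esslune{L}(f,F_t)$ together with continuity of $\gamma$. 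The net effect is that you trade the paper's explicit limit computation for a softer closedness argument at the boundary point, which you correctly identify as the delicate step; the only small hidden cost is that the closure step uses $\gamma([0,t))=\gamma(K\cap[0,t))$, which holds by the same one-line argument as Lemma~\ref{lemma:onlyGammaKMatters}. Both proofs ultimately lean on Lemmas~\ref{lemma:simpleLuneEssential}, \ref{lemma:ceilingOrder}, \ref{lemma:gammaJ}, and~\ref{lemma:onlyGammaKMatters}.
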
   
\begin{proof} 
It suffices to show the result for $t \in K$, for if $t \notin K$ and $t_K = \max\{s \in K  \mid s \leq t\}$, then 
$F_t = F_{t_K}$ 
and 
$\gamma([0,t]) = \gamma([0,t] \cap K) = \gamma([0,t_K])$.

Let $t \in K$. We need to show that 
$\gamma([0,t]) = \gamma(K \cap [0,t]) = \esslune{L}(f,F_t).$ 
To see that $\esslune{L}(f,F_t) \subseteq \gamma(K \cap [0,t]),$ 
take $(x,y) \in \interior (L(f,F_t))$, so that $ f(x) < y < F_t(x)$. 
Let $\{\omega_n\}_{n \geq 1}$ be a defining sequence of $(x,y)$ in $L_{(1)} = L$, which is also the defining sequence of some $s \in K$. 
Then $\gamma(s) = (x,y)$. Since $y = \lim f_{\omega_n}(x) = \lim g_{\omega_n}(x) = F_s(x)$, it follows that $F_s(x) < F_t(x)$. 
Lemma~\ref{lemma:ceilingOrder} implies that we necessarily have $F_s \leq F_t$ and thus $s < t$. Consequently, $\interior (L(f,F_t)) \subseteq \gamma(K \cap [0,t])$. Since $\gamma(K \cap [0,t])$ is a compact set (thus closed), $\esslune{L}(f,F_t) = \overline{\interior(L(f,F_t))} \subseteq \gamma(K \cap [0,t]).$

 To see that $\gamma(K \cap [0,t]) \subseteq \esslune{L}(f,F_t)$, write $J_\omega = [\alpha_\omega, \beta_\omega]$ for each $\omega \in \Omega$. Take $s \in K, 0 < s \leq t$, and suppose initially that $s = \alpha_{\hat{\omega}}$ for some $\hat{\omega}$. Since $s > 0$, there exists a unique $\omega$ such that $s = \alpha_{\omega^+}$. Since 
\begin{align*}
\gamma(\alpha_{\omega^+}) &= \lim_{t \nearrow \alpha_{\omega^+}} \gamma(t) \\
&= \lim_{t \nearrow \alpha_{\omega^+}} (\psi_{G_\omega, b_\omega, a_{\omega^+}}(t), g_\omega(\psi_{G_\omega, b_\omega, a_{\omega^+}}(t)))\\
&= \lim_{t \nearrow \alpha_{\omega^+}} (\psi_{G_\omega, b_\omega, a_{\omega^+}}(t), F_{\alpha_{\omega^+}}(\psi_{G_\omega, b_\omega, a_{\omega^+}}(t)))    
\end{align*}
is a limit point of the closed set $\esslune{L}(f,F_s)$, it follows that $\gamma(s) \in \esslune{L}(f,F_s) \subseteq \esslune{L}(f,F_t)$. 

If, on the other hand, $s \notin \{\alpha_\omega\}_{\omega \in \Omega}$ and $\{\omega_n\}$ is the defining sequence for $s$, then $\alpha_{\omega_n} \nearrow s$, thus $\gamma(\alpha_{\omega_n}) \to \gamma(s)$ and $\gamma(\alpha_{\omega_n}) \in \esslune{L}(f,F_{\alpha_{\omega_n}}) \subseteq \esslune{L}(f,F_t)$.   
\end{proof}

We have thus concluded the proof of Proposition~\ref{prop:main}.
To summarize, the continuity of $\gamma$, $F_t$, and $\psi$
are established in Lemma~\ref{lemma:gamma}, Proposition~\ref{prop:continuityCeiling}, and 
Proposition~\ref{prop:continuityOfPsi}, respectively;
property~\ref{item:floorCeiling} comes directly from the definition;
properties~\ref{item:monotonicity} and \ref{item:ceiling} 
are respectively Lemmas~\ref{lemma:ceilingOrder} and \ref{lemma:gammaTinTheCeiling},
property~\ref{item:psi} holds by definition,
property~\ref{item:footprint} is Proposition~\ref{prop:boundaryGamma},
and property~\ref{item:endpoints} is also true by construction.

\section{Proof of the theorem}
\label{sec:maincurve}

The first is step is to obtain a cylindrical version of Proposition~\ref{prop:main}.
Recall that $\T = \R / 2\pi\Z$, and that 
$C^\infty(\T)$ can be regarded as the space of $C^\infty$ $2\pi$-periodic functions $\R \to \R$, endowed with the $C^\infty$ topology (see \S \ref{subsec:initialDefinitions} for details).

\begin{prop}
\label{prop:cylinder}
Given intervals $[t_0,t_1]$ and $[c,d]$, there exist:
\begin{itemize}
	\item a continuous map $\gamma : [t_0,t_1] \to \T \times [c,d]$;
	\item a continuous map $t \in [t_0,t_1] \mapsto F_t \in C^\infty(\T)$;
	\item a continuous function $\psi : \T \times [c,d] \to \R$;
\end{itemize}
with the following properties:
\begin{enumerate}[label=\upshape(\roman*),series=propertyLists]
	\item \label{item:first} $F_{t_0} \equiv c$, $F_{t_1} \equiv d$;
	\item If $t \leq s$ then $F_t \leq F_s$;
	\item Writing $\gamma(t) = (x(t),y(t))$, we have $y(t) = F_t(x(t))$.
	\item \label{item:tangent} $F'_t(x) = \psi(x,F_t(x))$;
	\item \label{item:before_last} for each $t \in (t_0, t_1]$, the image $\gamma([t_0,t])$ equals the closure of the interior of $\{(x,y) \mid x\in \T, \ c \le y \le F_t(x)\}$;
	\item[\upshape(vi')] $\gamma(t_0) = (0,c)$, $\gamma(t_1) = (\pi,d)$; 
\end{enumerate}
\end{prop}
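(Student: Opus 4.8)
The plan is to deduce Proposition~\ref{prop:cylinder} from Proposition~\ref{prop:main} by slicing the cylinder $\T \times [c,d]$ into two lunes lying one above the other, filling each by the local construction, and splicing the two resulting curves with a short connecting arc. First fix a separating function $G \in C^\infty(\T)$ with $c \le G \le d$ such that $G - c$ vanishes to infinite order at $0$, $G - d$ vanishes to infinite order at $\pi$, and $c < G(x) < d$ for every $x \in \T \setminus \{0,\pi\}$ (patch two rescaled copies of a standard $C^\infty$ step function over the arcs $[0,\pi]$ and $[\pi,2\pi]$). Regard $\T = \R/2\pi\Z$ and let $q : \R^2 \to \T \times \R$ be the quotient; since $G$ is $2\pi$-periodic it restricts to $[0,2\pi]$ and to $[\pi,3\pi]$, and we may form the two planar lunes $L_{\mathrm{low}} = L(c,G)$ over $[0,2\pi]$ and $L_{\mathrm{up}} = L(G,d)$ over $[\pi,3\pi]$ (with $c,d$ the constant functions). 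The infinite-order contact conditions make these simple lunes, with supports equal to the full open domain intervals, and the images $q(L_{\mathrm{low}}) = \{(x,y)\in\T\times\R : c\le y\le G(x)\}$ and $q(L_{\mathrm{up}}) = \{(x,y)\in\T\times\R : G(x)\le y\le d\}$ have disjoint interiors, meet exactly along the graph of $G$, and together cover $\T \times [c,d]$. Because the two bounding functions of each lune extend $2\pi$-periodically, and slicing and bipartition preserve the infinite-order contact at the domain endpoints, every ceiling function produced by Proposition~\ref{prop:main} for these lunes descends to an element of $C^\infty(\T)$.

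Next, apply Proposition~\ref{prop:main} to $L_{\mathrm{low}}$ and to $L_{\mathrm{up}}$, obtaining Peano curves $\gamma^{(1)},\gamma^{(2)}$, ceiling families $(F^{(1)}_\tau)$ and $(F^{(2)}_\sigma)$ with $F^{(1)}_0 \equiv c$, $F^{(1)}_1 = G = F^{(2)}_0$, $F^{(2)}_1 \equiv d$, and functions $\psi^{(1)}$ on $L_{\mathrm{low}}$, $\psi^{(2)}$ on $L_{\mathrm{up}}$. On the cylinder, $q\circ\gamma^{(1)}$ runs from $(0,c)$ to $(0,c)$ filling $q(L_{\mathrm{low}})$, and $q\circ\gamma^{(2)}$ runs from $(\pi,d)$ to $(\pi,d)$ filling $q(L_{\mathrm{up}})$. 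Partition $[t_0,t_1]$ into consecutive intervals $[t_0,\tau_1]$, $[\tau_1,\tau_2]$, $[\tau_2,t_1]$ and set: on $[t_0,\tau_1]$, $\gamma$ a monotone reparametrization of $q\circ\gamma^{(1)}$ and $F_t$ the corresponding $F^{(1)}_\tau$; on $[\tau_1,\tau_2]$, $\gamma(t) = (\xi(t),G(\xi(t)))$ with $\xi : [\tau_1,\tau_2]\to[0,\pi]$ a continuous increasing surjection — this travels from $(0,c)$ to $(\pi,d)$ along the graph of $G$, which already lies in the swept set $q(L_{\mathrm{low}})$ — and $F_t \equiv G$; on $[\tau_2,t_1]$, $\gamma$ a monotone reparametrization of $q\circ\gamma^{(2)}$ and $F_t$ the corresponding $F^{(2)}_\sigma$. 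Finally put $\psi = \psi^{(1)}$ on $q(L_{\mathrm{low}})$ and $\psi = \psi^{(2)}$ on $q(L_{\mathrm{up}})$.

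It then remains to check the six properties, which is mostly bookkeeping. Continuity of $\gamma$ and of $t\mapsto F_t$ at $\tau_1,\tau_2$ and monotonicity of $(F_t)$ all follow from $F^{(1)}_1 = G = F^{(2)}_0$ and the endpoint matchings of $\gamma^{(1)},\gamma^{(2)}$; properties~\ref{item:first} and (vi') are immediate, and property~\ref{item:tangent} together with property~(iii) hold on the two lune pieces by the corresponding properties of Proposition~\ref{prop:main} and on the connecting arc because there $y(t) = G(\xi(t)) = F_t(x(t))$ and $F_t' = G' = \psi(\cdot,G(\cdot))$. For property~\ref{item:before_last}, write $R_t = \{(x,y)\mid x\in\T,\ c\le y\le F_t(x)\}$. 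In the first two phases the footprint equals $q\big(\esslune{L_{\mathrm{low}}}(c,F_t)\big) = \overline{\interior(R_t)}$ by Proposition~\ref{prop:main} (and because $q$ is a covering map that identifies only the lune tips, which do not affect interiors or closures). In the third phase the footprint is $q(L_{\mathrm{low}})\cup q\big(\esslune{L_{\mathrm{up}}}(G,F_t)\big)$, whose fiber over each $x$ is $[c,G(x)]\cup[G(x),F_t(x)] = [c,F_t(x)]$ because $F_t \ge G$; as $F_t > c$ on the dense set $\T\setminus\{0\}$, this union again equals $\overline{\interior(R_t)}$.

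The crux — and the only genuinely new idea — is the two-lune decomposition of the cylinder in the first paragraph; after it, everything reduces to Section~\ref{sec:basics}, the sole verification that is not purely formal being that $\psi$ is well defined and continuous on $\T\times[c,d]$. On the overlap $\{y = G(x)\}$, property~\ref{item:psi} of Proposition~\ref{prop:main} gives $\psi^{(1)}(x,G(x)) = (F^{(1)}_1)'(x) = G'(x) = (F^{(2)}_0)'(x) = \psi^{(2)}(x,G(x))$, so the two pieces agree there; at the two tips each relevant piece gives the value $0$, so $\psi^{(1)},\psi^{(2)}$ descend to the cylinder. Since $q(L_{\mathrm{low}})$ and $q(L_{\mathrm{up}})$ are closed, cover $\T\times[c,d]$, and intersect precisely in the graph of $G$ where the two definitions coincide, $\psi$ is continuous, and I do not expect any serious obstacle beyond this short gluing argument.
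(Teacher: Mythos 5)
Your proposal is correct and follows essentially the same approach as the paper: split the cylinder along the graph of a $C^\infty$ step function into a lower lune over $[0,2\pi]$ and an upper lune over $[\pi,3\pi]$, apply Proposition~\ref{prop:main} to each, connect the two Peano curves by an arc along the separating graph, and check that $\psi$ glues continuously because both pieces give $G'$ on the common graph and $0$ at the identified tips. The only cosmetic difference is the routing of the connecting arc (you go forward along $[0,\pi]$, the paper goes backward along $[2\pi,\pi]$); your write-up is if anything a bit more explicit than the paper's about the fiber computation in property~\ref{item:before_last} and about why $\psi$ is well defined on the quotient.
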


\begin{proof}
It is sufficient to consider the case $[t_0,t_1] = [c,d] = [0,1]$, since the general case follows by rescaling.
Let $g:\R \to [0,1]$ be a $2\pi$-periodic $C^\infty$ function such that $g(0)=0$, $g(\pi)=1$, $g^{(k)}(0) = g^{(k)}(\pi) = 0$ for every $k \ge 1$, and which is strictly monotone in each of the intervals $[0,\pi]$ and $[\pi,2\pi]$.
Consider the following four functions:
\begin{alignat*}{3}
f_1, g_1&:[0,2\pi]        \to \R  &\quad\text{given by}\quad f_1 &\equiv 0,             &\ g_1 &= g|_{[0,2\pi]}, \\
f_2, g_2&:[\pi,3\pi] \to \R  &\quad\text{given by}\quad f_2 &= g|_{[\pi,3\pi]}, &\ g_2 &\equiv 1,
\end{alignat*}
and the corresponding lunes $L_1 = L(f_1,g_1)$ and $L_2 = L(f_2,g_2)$. 
Applying Proposition~\ref{prop:main} to the lune $L_i$ we obtain
a Peano curve $\gamma_i$, a family of ceiling functions $F_{i,t}$, and a function $\psi_i$.
Let $p: L_1 \cup L_2 \to \T \times [0,1]$ be the bijective map defined by $p(x,y) = (x \bmod 2\pi, y)$; set
$$
\gamma^*(t) = \begin{cases} \gamma_1(3t), &0 \le t < 1/3; \\
\left( 3\pi(1-t), g\left(3\pi(1-t)\right)\right), &1/3 \le t \le 2/3; \\
\gamma_2(3t-2), &2/3 < t \le 1;
\end{cases} 
$$
and $\gamma = p \circ \gamma^*$;
set $\psi = \psi^* \circ p^{-1}$ for $\psi^*: L_1 \cup L_2 \to \R$ such that $\psi^*|{L_1} = \psi_1$ and $\psi^*|{L_2} = \psi_2$; finally,
for $x \in \T$, let
$$
F_t(x) = \begin{cases} F_{1,3t}(x), &0 \le t < 1/3; \\
g(x), &1/3 \le t \le 2/3; \\
F_{2,3t-2}(x), &2/3 < t \le 1.
 \end{cases} 
$$
Then $\gamma$, $\psi$ and $F_t$ have the desired properties.
\end{proof}

We improve the previous proposition by controlling the derivatives:

\begin{prop}
\label{prop:cylinder_with_norm}
Given intervals $[t_0,t_1]$, $[c,d]$
and numbers $k_0 \in \N$, $\delta_0 > 0$, there exist
maps $\gamma$, $F_t$, and $\psi$
satisfying properties \ref{item:first}--\ref{item:before_last} in Proposition~\ref{prop:cylinder}
and, in addition, the following ones:
\begin{enumerate}[resume=propertyLists,label=\upshape(\roman*)]
	\item \label{item:endsMeet} $\gamma(t_0) = (0,c)$, $\gamma(t_1) = (0,d)$; 
	\item $\| F_t - c_t \|_{k_0} < \delta_0$ for every $t$, where $c_t$ is the constant $\frac{c(t_1-t) + d(t - t_0)}{t_1 - t_0}$. \label{item:controlledNorm} 
\end{enumerate}
\end{prop}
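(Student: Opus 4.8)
The idea is to obtain Proposition~\ref{prop:cylinder_with_norm} by composing finitely many copies of Proposition~\ref{prop:cylinder}, rescaled to small subintervals of $[t_0,t_1]$, so that the total variation of the ceiling functions over each piece is small. As before it suffices to treat $[t_0,t_1]=[c,d]=[0,1]$, the general case following by an affine change of coordinates in both the parameter and the height (which only rescales $\psi$ by a constant and changes the $C^{k_0}$ norm by a fixed factor, so $\delta_0$ can be adjusted accordingly). Choose an integer $N$ large enough that $1/N$, together with the constants coming from the $k_0$-th derivative bounds produced by Proposition~\ref{prop:cylinder}, is smaller than $\delta_0$; concretely, for each $j=0,1,\dots,N-1$ apply Proposition~\ref{prop:cylinder} with parameter interval $[j/N,(j+1)/N]$ and height interval $[j/N,(j+1)/N]$, obtaining maps $\gamma^{(j)}$, $F^{(j)}_t$, $\psi^{(j)}$ on the sub-cylinder $\T\times[j/N,(j+1)/N]$.

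\textbf{Gluing.}
I would then define $\gamma$ on $[0,1]$ piecewise by $\gamma(t)=\gamma^{(j)}(t)$ for $t\in[j/N,(j+1)/N]$, define $F_t=F^{(j)}_t$ on the same parameter range, and define $\psi$ on $\T\times[0,1]$ by $\psi=\psi^{(j)}$ on $\T\times[j/N,(j+1)/N]$. The overlaps at the points $t=j/N$ and the circles $y=j/N$ are exactly where consecutive constructions must agree: by property~\ref{item:first} for the $j$-th block, $F^{(j)}_{(j+1)/N}\equiv (j+1)/N\equiv F^{(j+1)}_{(j+1)/N}$, so $t\mapsto F_t$ is continuous and property~(i) holds with $F_0\equiv 0$, $F_1\equiv 1$; properties (ii), (iii), (iv) are inherited blockwise and the matching of constants on the seam circles makes $\psi$ well-defined and continuous there (the value of $\psi^{(j)}$ along $y=(j+1)/N$ is $F'$ of the constant function $(j+1)/N$, i.e.\ zero, which also equals the value of $\psi^{(j+1)}$ along $y=(j+1)/N$). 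Property~\ref{item:before_last}: for $t$ in the $j$-th block, $\gamma([0,t])=\gamma([0,j/N])\cup\gamma^{(j)}([j/N,t])$, and since $\gamma([0,j/N])$ is the full sub-cylinder $\T\times[0,j/N]$ (again by property~\ref{item:before_last} applied to the earlier blocks) this union is the closure of the interior of $\{(x,y)\mid c\le y\le F_t(x)\}$. For the endpoint conditions~\ref{item:endsMeet}, use property~(vi') of Proposition~\ref{prop:cylinder}: the $0$-th block starts at $(0,0)$, and one must arrange that the last block \emph{ends} at $(0,1)$ rather than at $(\pi,1)$. This is handled by either (a) composing the very last application of Proposition~\ref{prop:cylinder} with the rotation $x\mapsto x+\pi$ of $\T$, which preserves all the listed properties and sends its terminal point $(\pi,1)$ to $(0,1)$ while sending its initial point to $(0,(N-1)/N)$ to match the previous block (so one also rotates the block before it, and so on — in fact it is cleanest to apply the $x\mapsto x+\pi$ twist to every odd-indexed block so that consecutive blocks link up $(0,\cdot)\to(\pi,\cdot)\to(0,\cdot)\to\cdots$), or (b) noting $N$ can be taken even. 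Either way $\gamma(0)=(0,0)$, $\gamma(1)=(0,1)$.

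\textbf{The norm estimate.}
Finally, property~\ref{item:controlledNorm}: for $t\in[j/N,(j+1)/N]$ we have, by property~\ref{item:before_last} (or directly by monotonicity), $j/N\le F_t(x)\le (j+1)/N$ pointwise, and $c_t$ lies in the same interval, so $\|F_t-c_t\|_0\le 1/N$. For the higher derivatives, $F_t=F^{(j)}_t$ is, after undoing the affine rescaling that shrank the height interval to length $1/N$, a ceiling function of the fixed cylinder construction of Proposition~\ref{prop:cylinder} multiplied by the vertical scaling factor $1/N$; hence $\|F^{(i)}_t\|_0\le \frac{1}{N}\,M_{k_0}$ for $1\le i\le k_0$, where $M_{k_0}$ is the (finite, by continuity and compactness) supremum of $\|\widetilde F_s\|_{k_0}$ over the model construction. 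Since $c_t$ is affine in $x$ — in fact constant — its derivatives of order $\ge 1$ vanish, so $\|F_t-c_t\|_{k_0}\le \frac{1}{N}\max(1,M_{k_0})$. Choosing $N>\max(1,M_{k_0})/\delta_0$ gives the claim.

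\textbf{Main obstacle.}
The genuinely delicate point is the compatibility of $\psi$ across the seam circles $y=j/N$: the function $\psi$ from Proposition~\ref{prop:cylinder} is only defined to be continuous on its own closed sub-cylinder, and a priori there is no reason its boundary values should match those of the neighbouring block. What saves us is that, in the construction behind Proposition~\ref{prop:main}/\ref{prop:cylinder}, the extreme ceiling functions $F_{t_0}\equiv c$ and $F_{t_1}\equiv d$ are \emph{constant}, so along $y=c$ and $y=d$ one has $\psi=F_{t_0}'=0=F_{t_1}'$; thus every block contributes the value $0$ along each of its two boundary circles, and the glued $\psi$ is continuous. (If one is uneasy about invoking this, the alternative is to observe that $\psi$ restricted to $\T\times\{d\}$ equals the derivative of the constant ceiling function $d$, which is identically $0$ by property~\ref{item:tangent}, and similarly at the bottom — this is a direct consequence of the already-proven properties and needs no reopening of the earlier proof.) After this observation the rest is bookkeeping with the affine rescalings and the parity trick for the endpoints.
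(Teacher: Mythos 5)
Your proof is correct and follows essentially the same strategy as the paper: slice the cylinder $\T\times[c,d]$ into $N$ thin blocks of height $(d-c)/N$, apply a rescaled Proposition~\ref{prop:cylinder} to each, rotate odd-indexed blocks by $\pi$ so consecutive endpoints link up, and take $N$ large to force the $C^{k_0}$ bound. Two small remarks. First, your observation that $\psi$ vanishes on each seam circle $\T\times\{j/N\}$ (because the extreme ceiling functions of each block are constant, so $\psi(x,j/N)=F'\equiv 0$ there by property~\ref{item:tangent}) is the correct justification for the continuity of the glued $\psi$, and is a point the paper glosses over with ``it is clear.'' Second, you are right that $N$ must be \emph{even} to land at $(0,d)$; the paper's proof fixes an \emph{odd} integer $n$, which appears to be a slip, since with the rotation $x\mapsto x+j\pi$ the terminal point is $(n\pi\bmod 2\pi,\,d)$ and an odd $n$ would give $(\pi,d)$ instead. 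Your description of option~(a) is internally slightly confused — rotating only the last block does not preserve the matching with block $N-2$ — but you recover the right formulation (twist every odd-indexed block, $N$ even), which coincides with option~(b), so nothing is lost.
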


\begin{proof}
Again, it is sufficient to consider the case $[t_0,t_1] = [c,d] = [0,1]$.
Let $\hat{\gamma}$, $\hat{F}_t$, and $\hat{\psi}$ be given by the previous proposition.
By compactness, we have $\|\hat{F}_t\|_{k_0} \leq C$ for some finite $C$ independent of $t$. Fix an odd integer $n > (C+1)/\delta_0$. By rescaling and translating we obtain $\hat\gamma_j$, $\hat F_{j,t}$, and  $\hat\psi_j$ for the cylinders $\T \times [j/n, (j+1)/n]$, where $j=0,1,\ldots n-1$, with the extra property that if $t \in [j/n, (j+1)/n]$ then $\|\hat F_{j,t} - j/n\|_{k_0} \leq C/n < \delta_0 - 1/n$.
Finally, we rotate the cylinders so that everything glues: in other words, 
for each $t \in [0,1]$ we let $j = \floor{nt}$ and define:
$$
\gamma(t)  = \hat\gamma_j(nt) + (j\pi,0) \, , \quad
F_t(x)     = \hat F_{j,nt-j}(x + j\pi)  \, ,  \quad 
\psi(x,t)  = \hat \psi_j(x + j\pi,t) \, .
$$
It is clear that these maps have the required properties \ref{item:first}--\ref{item:before_last} and \ref{item:endsMeet}.
To see property~\ref{item:controlledNorm}, notice that $c_t = t$ so letting $j = \floor{nt}$ we have
\begin{equation*}
\|F_t - c_t\|_{k_0} \le \|F_t - j/n\|_{k_0} + 1/n < \delta_0 \, . \qedhere
\end{equation*}
\end{proof}

Finally, we explain how the previous proposition allows us to conclude:

\begin{proof}[Proof of the theorem]
Let $k :(0,\infty) \to \N$ be upper semicontinuous
and $\epsilon:(0,\infty) \to (0,\infty)$ be lower semicontinuous.
Then there exist
two-sided sequences $\{t_n\}_{n\in\Z}$, $\{k_n\}_{n\in \Z}$, and $\{\epsilon_n\}_{n\in \Z}$ 
taking values in $(0,\infty)$, $\N$, and $(0,\infty)$ respectively, 
such that $\{t_n\}$ is monotonically increasing, 
$\lim_{n \to -\infty} t_n = 0$, $\lim_{n \to +\infty} t_n = \infty$, 
and 
$$
t\in [t_n, t_{n+1}] \ \Rightarrow \epsilon(t) \ge \epsilon_n \text{ and } k(t) \le k_n.
$$
For each $n$, let $\delta_n = \epsilon_n/2^{k_n}$,
and apply Proposition~\ref{prop:cylinder_with_norm} with both intervals equal to 
$I_n = [t_n, t_{n+1}]$,
thus obtaining a Peano curve $\gamma_n: I_n \to \T \times I_n$, a family of ceiling functions $F_{n,t}$ and a continuous function $\psi_n$ defined on $\T \times I_n$ satisfying all seven properties. Also, notice that $c_t = t$ in part~\ref{item:controlledNorm}.

Define a diffeomorphism
$P \colon \T \times (0, \infty) \to \R^2 \setminus \{(0,0)\}$
by $P(\theta, r) = (r \cos \theta, r \sin \theta)$.
Recall that $\alpha_t(\theta) = P(\theta,t)$.
We now construct maps
$\gamma^*: (0,\infty) \to \T \times (0,\infty)$, $F_t^*: \T \to (0,\infty)$ and $\psi^*:\T \times (0,\infty) \to \R$ by setting for $(x,t) \in \T \times [t_n, t_{n+1}]$:
$$
\gamma^*(t) = \gamma_n(t), \quad
F_t^*(x)    = F_{n,t}(x), \quad
\psi(x,t)   = \psi_n(x,t) \, .
$$

Next, we define the Peano curve $\gamma$ by $\gamma(0) = 0$ and $\gamma(t) = P \circ \gamma^*(t)$ for $t > 0$. 
Let $\Lambda^*$ be the line field on $\T \times (0,\infty)$ spanned by the vector field $\frac{\partial}{\partial \theta} + \psi^*(r,\theta) \frac{\partial}{\partial r}$;
by pushing it forward by the derivative of $P$, we obtain a  line field $\Lambda$ on $\R^2 \setminus \{(0,0)\}$.

Notice that, for each $t > 0$, $\gamma^*([0,t]) =\{(x,y): x \in \T, 0 < y \leq F_t(x)\}$. It follows then that $\beta_t: \theta \in \T \mapsto P(\theta,F_t(\theta)) \in \R^2$ is a smooth embedding whose image is $\partial\gamma([0,t])$. By property~\ref{item:tangent} of Proposition~\ref{prop:cylinder}, $\beta_t$ is tangent to the line field $\Lambda$.

To conclude the proof we check that the proximity condition \eqref{eq:proximity} is satisfied.
Since $\beta_t(\theta) = F_t(\theta)\alpha_1(\theta)$ and $\alpha_t(\theta) = t \alpha_1(\theta)$, we have, for each $k \in \N$,
%\begin{multline*}
%\|\beta_t^{(k)}(\theta) - \alpha_t^{(k)}(\theta)\| 
%= \left| \sum_{i=0}^k \binom{k}{i} (F_t -t)^{(i)}(\theta)\alpha_1^{(k-i)}(\theta)  \right| \\
%\le \sum_{i=0}^k \binom{k}{i} \|F_t - t\|_i 
%\le 2^k \|F_t - t\|_k. 
%\end{multline*}
%%
$$
\|\beta_t^{(k)}(\theta) - \alpha_t^{(k)}(\theta)\| 
\le \sum_{i=0}^k \binom{k}{i} \big|(F_t -t)^{(i)}(\theta)\big| \,  \underbrace{\big|\alpha_1^{(k-i)}(\theta)\big|}_{1}
\le 2^k \|F_t - t\|_k \, . 
$$
Given $t>0$, let $n$ be such that $t \in I_n$. Then
\begin{equation*}
\|\alpha_t - \beta_t\|_{k(t)} 
\le \|\alpha_t - \beta_t\|_{k_n} 
\le  2^{k_n} \|F_t - t \|_{k_n}\\
< 2^{k_n} \delta_n 
= \epsilon_n 
\le \epsilon(t).   \qedhere
\end{equation*}
\end{proof}

%%%%%%%%%%%%%%%%%%%%%%%%%%%%%%%%%%%%%%
\bibliography{biblio}
\bibliographystyle{acm}
\end{document}